\date{}
\newtheorem{thm}{Theorem}[section]
\newtheorem{cor}[thm]{Corollary}
\newtheorem{lem}[thm]{Lemma}
\newtheorem{prop}[thm]{Proposition}
\newtheorem{ex}[thm]{Example}
\theoremstyle{definition}
\newtheorem{defn}[thm]{Definition}
\newtheorem{example}[thm]{Example}
\newtheorem{remark}[thm]{Remark}
\mathchardef\ordinarycolon\mathcode`\:
\newcommand{\R}{\mathbb{R}}
\newcommand{\C}{\mathbb{C}}
\newcommand{\Z}{\mathbb{Z}}
\newcommand{\Q}{\mathbb{Q}}
\newcommand{\PP}{\mathbb{P}}
\newcommand{\M}{\mathcal{M}}
\newcommand{\sO}{\mathscr{O}}
\newcommand{\G}{\mathbb{G}}
\newcommand{\dist}{\mathop{\mathrm{dist}}}
\newcommand{\Spec}{\mathop{\mathrm{Spec}}}
\newcommand{\initial}{\mathop{\mathrm{in}}}
\let\oldS=\S
\def\S{\oldS\,}
\title{\bf Mustafin Varieties}
\author{Dustin Cartwright, Mathias H\"abich, \\
Bernd Sturmfels and Annette Werner}
\begin{document}
\maketitle

\begin{abstract} \noindent
A Mustafin variety is a degeneration of projective space
induced by a point configuration in a Bruhat-Tits building.
The special fiber is reduced and Cohen-Macaulay, and its
irreducible components form interesting combinatorial patterns.
For configurations that lie in one apartment, these patterns are
regular mixed subdivisions of scaled simplices, and
the Mustafin variety is a twisted Veronese variety 
built from such a subdivision. This connects
our study to  tropical and toric geometry.   
For general configurations, the irreducible components of the special fiber
are rational varieties, and any blow-up of
projective space along a linear subspace arrangement
can arise. A detailed study of Mustafin varieties is undertaken
for  configurations in the Bruhat-Tits tree of $ PGL(2)$
and in the two-dimensional building of $PGL(3)$.
The latter yields the classification of 
Mustafin triangles into $38$ combinatorial types.
\end{abstract}


\section{Introduction}

This paper introduces a novel combinatorial theory of degenerations of projective spaces.
Our degenerations are induced by $n$-tuples of $d \times d$-matrices over a field with a valuation,
and they are entirely natural from the perspectives of linear algebra, tropical geometry, 
and computational algebra. When the matrices are diagonal matrices then we recover
mixed subdivisions of scaled simplices, delightful structures that are known to be equivalent to 
tropical polytopes  and to  triangulations of products of simplices. Our aim here is
to develop the non-abelian theory, where 
the given matrices are no longer diagonal. The combinatorial implications of this are illustrated
in Figure~\ref{fig:planar}, where the left diagram shows the familiar abelian case while the right picture
shows the non-abelian case.

The total spaces in our degenerations are called Mustafin varieties, 
and the combinatorial objects referred to above are their special fibers.
Degenerations are a central topic in arithmetic geometry. The projective space
plays an important role here since any of its degenerations
induces a degeneration of every projective subvariety.

We now present our algebraic set-up in  precise terms.
Let $K$ be a field with a discrete valuation $v\colon K^* \rightarrow \Z$, and let $R$ be its ring of integers and $k$ its residue field. 
For example, $K$ could be the field of rational functions $k(t)$ or the field of formal Laurent series $k(\!(t)\!)$ over any ground field $k$, or it could be the field $\Q_p$ of $p$-adic numbers for some prime number $p$. We fix a prime element $\pi$ in the ring of integers $R$, i.e.\ $\pi$ is an element of the field $K$ having minimal positive valuation.

Let $V$ be a vector space of dimension $d \geq 2$ over $K$ and denote by $\PP(V)= \operatorname{Proj \: Sym} V^\ast$ the corresponding projective space, where $V^\ast$ is the dual space of $V$.
The projective space $\PP(V)$ parametrizes lines through the origin in $V$. 
We regard $V$ as an $R$-module,  and
a \emph{lattice} in $V$ is any $R$-submodule $L \subset V$
that is free of rank $d$.
If $L$ is a lattice in $V$, we denote by $\PP(L) = \operatorname{Proj \: Sym} L^\ast$ the corresponding projective space over the ring of integers $R$. Here, 
$L^\ast = {\rm Hom}_R(L,R)$ denotes the dual $R$-module.

\begin{defn}
Let $\Gamma =\{L_1, \ldots, L_n \}$ be a  set of lattices in $V$.
Then $\PP(L_1), \ldots, \PP(L_n)$ are projective spaces over $R$
whose generic fibers are canonically isomorphic to the projective space
$\PP(V) \simeq \PP_K^{d-1}$.
 The open immersions $\PP(V) \hookrightarrow \PP(L_i)$ give rise to
 a map
 \begin{equation*}
\PP(V)\, \longrightarrow \,\PP(L_1) \times_R \ldots \times_R \PP(L_n).
\end{equation*}
 Let $\M(\Gamma)$ be the closure of the image endowed with the reduced 
 scheme structure. We call  $\M(\Gamma)$ the \emph{Mustafin variety} associated to
 the set of lattices~$\Gamma$. Note that
 $\M(\Gamma)$ is a scheme over $R$ whose generic fiber is
$\PP(V)$. Its special fiber $\M(\Gamma)_k$ is a scheme over $k$.
\end{defn}
 
The construction of the Mustafin variety $\M(\Gamma)$ depends
only on the homothety classes of the lattices $L_i$, so throughout this
paper we regard  $\Gamma$ as a configuration  in the Bruhat-Tits building
$\mathfrak{B}_d$ associated with the group $PGL(V)$.
Varieties of the form $\M(\Gamma)$ were investigated by Mustafin \cite{mus}
 in order to generalize Mumford's seminal work \cite{mum} on uniformization of curves to higher dimension. Mustafin primarily considered the case of convex subsets $\Gamma$, as defined 
 in the text prior to Theorem~\ref{sec:must;thm:convex}.
 
In the present paper we are interested in arbitrary configurations $\Gamma$.
 The resulting Mustafin varieties have worse singularities
  but their combinatorial structure is richer.
 We note that every Mustafin variety is dominated by one from a convex configuration. 
 Indeed, any inclusion $\Gamma \subset \Gamma' $
specifies a surjective morphism $\M(\Gamma') \rightarrow \M(\Gamma)$
and the set of lattice points in the convex hull 
of~$\Gamma$  is a finite set, as
seen in \cite{fa, JSY}.

The term ``Mustafin variety'' is used here for the first time. In the previous discussions
of  these objects in \cite{CS, fa, KT}, Mustafin
varieties had been called ``Deligne schemes'', since for a convex set of vertices the
corresponding Mustafin variety represents the so-called Deligne functor. 
 We decided to name them after
G.\,A.~Mustafin, to recognize the contributions of \cite{mus},
and we opted for ``variety'' over ``scheme'' because $\M(\Gamma)$ and 
its special fiber $\M(\Gamma)_k$ are reduced for all lattice
configurations~$\Gamma \subset \mathfrak{B}_d$.

Figure \ref{fig:planar} shows two pictures representing Mustafin
varieties for  $d=3$ and $n=4$. In both cases, the special fiber
$\M(\Gamma)_k$ is a surface with ten irreducible components,
namely four copies of $\PP_k^2$ and six copies of
$\PP_k^1 \times \PP^1_k$. The left picture is {\em planar}
(it is a {\em regular mixed subdivision}) because the configuration $\Gamma$ 
lies in a single apartment of $\mathfrak{B}_3$, while the
right picture represents a {\em non-planar} case when 
$\Gamma$ is not in one apartment.

\begin{figure}
\centering
\includegraphics[width=5.9cm]{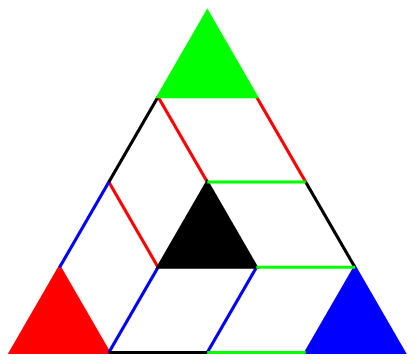} \qquad \quad
\includegraphics[width=5.7cm]{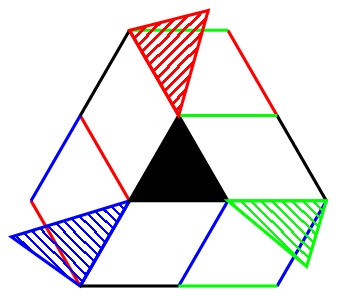} 
\caption{Special fibers of Mustafin varieties for $d = 3$
are degenerations of the projective plane.
The two schemes depicted above arise from
configurations of $n = 4$ points in $\mathfrak{B}_3$.
}
\label{fig:planar}
\end{figure}

This article is organized as follows. In Section 2 we 
develop the general theory of Mustafin varieties, including
their representation in terms of the polynomial ideals seen
in \cite{CS}.
Theorem \ref{sec:must;thm:basicfacts} summarizes the main geometric 
results, including the fact that the special fibers are
reduced, Cohen-Macaulay, and have rational components.

Section 3 concerns the case $d=2$, which was first studied by  Mumford in \cite[\S 2]{mum}.
Every configuration $\Gamma$ in the Bruhat-Tits tree
$\mathfrak{B}_2$ determines a finite phylogenetic tree $T_\Gamma$
which is an invariant of the isomorphism type of $\M(\Gamma)$.
In Theorem~\ref{thm:d-2-fiber} we determine the reduction 
complex of $\M(\Gamma)_k$ in terms of $T_\Gamma$, and in Proposition
\ref{prop:monotype}
we characterize configurations whose Mustafin
variety is defined by a monomial ideal.

The situation when $\Gamma$ lies in a single apartment  of $\mathfrak{B}_d$
is investigated in Section 4.
Theorem  \ref{thm:twistedVeronese} realizes
$\M(\Gamma)$ as a twisted Veronese variety.
The special fiber $\M(\Gamma)_k$ is the toric
degeneration of $\PP^{d-1}_k$ represented by
a regular mixed subdivisions of scaled simplices, as
seen on the left in Figure~\ref{fig:planar}.
The fact that any two points of $\mathfrak{B}_d$ lie in
one apartment leads to the classification of
Mustafin varieties for $n=2$ in Theorem~\ref{prop:classify2}.

In Section 5 we study the geometry of the irreducible
components of the special fiber $\M(\Gamma)_k$.
We distinguish between primary components,
which are indexed by  $\Gamma$ itself,
and secondary components, such as the 
bichromatic parallelograms in Figure \ref{fig:planar}.
Both types of components are rational but they can be
singular. Theorem \ref{thm:primary-components} characterizes
primary components as the blow-ups of projective spaces
along linear subspace arrangements.

Section 6 offers a detailed study of the case $n{=}d{=}3$,
centering around the algebro-geometric implications of the rich structure of
triangles in the two-dimensional building~$\mathfrak{B}_3$.
Our main result is the classification in Theorem~\ref{thm:TriangleCensus} of
Mustafin triangles into $38$ combinatorial types,
namely, the $18$ planar types in Figure \ref{fig:18types},
and $20$ non-planar~types.

\section{Structure of Mustafin Varieties}
\label{sec:struct-must-vari}

We denote by $\mathfrak{B}_d$ the Bruhat-Tits building associated to the group $PGL(V)$. 
It can be obtained by gluing certain real vector spaces, the \emph{apartments}.
Let $T$ be a maximal torus in $PGL(V)$. There is a basis $e_1, \ldots, e_d$ of $V$ such that $T$ is given by the group of diagonal matrices with respect to $e_1, \ldots, e_d$. The apartment in $\mathfrak{B}_d$ corresponding to $T$ is defined as  $A = X_\ast (T) \otimes_\Z \R$, where $X_\ast(T) = \operatorname{Hom} (\G_m, T)$ is the cocharacter group of $T$. 
We write $\eta_i$ for the cocharacter of $T$ induced by mapping $\lambda$ to the diagonal matrix with entry $\lambda$ in the $i$th place and entries $1$ in the other places.
The map $A \rightarrow \R^d / \R (1, \ldots, 1)$ that takes
  $\sum_i r_i \eta_i$ to the residue class of $(r_1, \ldots, r_d)$ is an isomorphism of vector spaces.

The apartment $A$ is the geometric realization of a simplicial complex
on the vertex set $X_\ast(T)\simeq \Z^d/\Z(1,\ldots, 1)$. This uses the isomorphism
above. Its simplices
are the cells in the infinite hyperplane arrangement that consists of
 the affine hyperplanes
\begin{equation}
\label{eq:hyperplanes}
\quad
H^{(ij)}_{m} \; = \; \biggl\{ \sum_{\ell=1}^d r_\ell \eta_\ell \in A: r_i - r_j = m \biggr\}
\qquad \hbox{
for $ 1 \leq i  < j \leq d$ and $m \in \Z$.}
\end{equation}
The building $\mathfrak{B}_d$ and its simplicial structure can be described in the following way. 
We write $[L] = \{ \alpha L: \alpha \in K^*\}$ for the \emph{homothety class} of a lattice $L$.  Two  lattice classes $[L']$ and $[M']$ are called \emph{adjacent} if there 
exist representatives $L$ and $M$ satisfying
\begin{equation}
\label{eq:adjacency}
\pi L \subset M \subset L.
\end{equation}
Let $[L]$ be a lattice class such that  $L$ is in diagonal form with respect to the basis $e_1, \ldots, e_d$,
i.e.\ $\,L = \pi^{m_1} R e_1 + \ldots + \pi^{m_d} R e_d$ for some $m_1, \ldots, m_d \in \Z$. We associate to $[L]$ the point $\sum_i (-m_i) \eta_i$ in the apartment $A$.
This is the standard bijection between the set of lattice classes in diagonal form with respect to $e_1, \ldots, e_d$ and the vertices of the  simplicial complex
above.  This bijection preserves adjacency. Hence the simplicial complex on~$A$ is the flag complex of the adjacency graph on diagonal lattice classes.

We write $\mathfrak{B}_d^0$ for the set of all lattice classes $[L]$  in~$V$.
Putting all apartments together, we see that the building $\mathfrak{B}_d$ is
a geometric realization of a simplicial complex on $\mathfrak{B}_d^0$, namely,
the flag complex of the graph on all lattice classes defined by the adjacency relation from~(\ref{eq:adjacency}).
The group $PGL(V)$ acts in the natural way on~$\mathfrak{B}_d$
and its vertex set~$\mathfrak{B}_d^0$.
If the residue field $k$ is a finite field containing $q$ elements, then $\mathfrak{B}_2$ 
is an infinite regular tree of valency $q+1$. More generally, the link of any vertex in
$\mathfrak{B}_d$ is isomorphic to the order complex of the poset of subspaces in $k^d$.
This follows from

\begin{lem}\label{sect:mus;lem:neighbours}
  Every neighbor of a vertex $[M]$ in $\mathfrak{B}_d^0$
 has the form $[L]$ for a lattice $L$ with $\pi M \subset L \subset M$, where  both inclusions are strict. Hence the quotient  $L / \pi M$ is a non-trivial subspace of the  $k$-vector space $M / \pi M$. In this way, we get a bijection 
\[\{\text{Neighbors of }[M]\} \longrightarrow \{\text{Non-trivial linear subspaces of }M / \pi M\},\]
mapping adjacent neighbors of $[M]$ to nested subspaces.
\end{lem}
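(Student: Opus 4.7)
The plan is to identify each neighbor of $[M]$ with a canonical lattice representative sandwiched strictly between $\pi M$ and $M$, then translate the adjacency relation into subspace containment in $M/\pi M$.

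First, I would produce the normalized representative. By definition a neighbor $[L] \neq [M]$ of $[M]$ admits representatives with $\pi L \subset M \subset L$. Multiplying everything by $\pi$ converts this to $\pi M \subset \pi L \subset M$, so replacing $L$ by $\pi L$ (which lies in the same homothety class) we obtain a representative—call it $L$ again—with $\pi M \subset L \subset M$. Both inclusions must be strict: $L = M$ gives $[L] = [M]$, while $L = \pi M$ gives $[L] = [\pi M] = [M]$, each contradicting $[L]\neq[M]$. This representative is unique: any other homothetic representative is $\pi^n L$ for some $n \in \Z$, and one checks directly that for $n \ge 1$ the chain $\pi L \subsetneq \pi M$ forces $\pi^n L \subsetneq \pi M$, while for $n \le -1$ the inclusion $\pi M \subsetneq L$ forces $\pi^n L \supsetneq M$. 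So $n = 0$ is the only option.

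Next I would set up the bijection. Since $\pi$ annihilates $M/\pi M$, this quotient is a $k$-vector space of dimension $d$, and $L/\pi M$ is a $k$-subspace. The strict inclusions $\pi M \subsetneq L \subsetneq M$ say precisely that this subspace is non-trivial (nonzero and proper). The map $[L] \mapsto L/\pi M$ is well defined by the uniqueness above and injective because $L$ can be recovered as the preimage of $L/\pi M$ under $M \twoheadrightarrow M/\pi M$. For surjectivity, given a non-trivial subspace $W \subsetneq M/\pi M$, let $L$ be its preimage in $M$. Then $\pi M \subsetneq L \subsetneq M$, and since $R$ is a discrete valuation ring and $L$ is a finitely generated torsion-free $R$-module sandwiched between two free modules of rank $d$, $L$ itself is a lattice. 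Finally, $\pi L \subset \pi M \subset L$ shows $[L]$ is a neighbor of $[M]$.

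Finally I would verify the nesting claim. Suppose $[L_1]$ and $[L_2]$ are neighbors of $[M]$ in normalized form, and that they are adjacent to one another: some rescaling yields $\pi L_1 \subset \pi^c L_2 \subset L_1$ for an integer $c$. Since $L_1, L_2 \subset M$ and $\pi M \subset L_1, L_2$, the only values of $c$ consistent with $\pi M \subsetneq L_i \subsetneq M$ are $c = 0$ (giving $L_2 \subset L_1$, hence $W_2 \subseteq W_1$) and $c = 1$ (giving $L_1 \subset L_2$, hence $W_1 \subseteq W_2$); any other $c$ forces $L_1 \subset \pi M$ or $L_1 \supset M$. Conversely, if say $L_1 \subseteq L_2$, then $\pi L_2 \subseteq \pi M \subseteq L_1 \subseteq L_2$ realizes adjacency directly.

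The only mildly delicate step is the uniqueness of the normalized representative and the bookkeeping in the final case analysis, since one must avoid confusion between the scheme of homothety classes and specific lattice representatives. Once the strict inclusions $\pi M \subsetneq L_i \subsetneq M$ are fixed, though, the argument reduces to elementary manipulations in an $R$-module over a discrete valuation ring.
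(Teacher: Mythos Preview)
Your proof is correct. The paper states this lemma without proof, treating it as a standard fact about the Bruhat--Tits building, so there is no approach to compare against; your argument supplies the missing details cleanly and the case analysis for the nesting claim is sound.
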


Now let us choose coordinates and describe the
polynomial ideal that cuts out a Mustafin variety.
This ideal will be multihomogeneous in the sense of
the paper \cite{CS} whose notation and setup we shall adopt.
The image of the diagonal map $\Delta\colon \PP(V) \rightarrow \PP(V)^n = \PP(V) \times_K \ldots \times_K \PP(V)$ is the subvariety of the product $\PP(V)^n$ given by the  ideal $I_2(X)$ generated by the $2 \times 2$-minors of a matrix  $X = (x_{ij})_{\mathop{\scriptstyle i=1, \ldots, d}\limits_{\scriptstyle j = 1, \ldots, n}}$ of unknowns, where the $j$th column of this matrix represents coordinates on the $j$th factor.

Every $g \in GL(V)$ is represented by a matrix in $K^{d \times d}$, and it
 determines a dual (transpose) map $^t g\colon V^\ast \rightarrow V^\ast$
and a  morphism  $g\colon \PP(V) \rightarrow \PP(V)$.
This induces the usual action of $PGL(V)$ on $\PP(V)$. 
If $g_1, \ldots, g_n$ are elements of $GL(V)$,  the image of 
\[\PP(V)\, \stackrel{\Delta}{\longrightarrow} \,\PP(V)^n
\,\, \stackrel{g_1^{-1} \times \cdots \times g_n^{-1}}{\relbar\joinrel\relbar\joinrel\relbar\joinrel\relbar\joinrel\longrightarrow} \, \, \PP(V)^n \]
is the subvariety of the product $\PP(V)^n$ given by the multihomogeneous prime ideal
\begin{equation*}
I_2 \big((g_1, \ldots,g_n)(X)\big) \,\,\subset \,\,K[X].
\end{equation*}
Here $\,(g_1, \ldots, g_n) (X)\,$ is the 
$d {\times} n$-matrix whose $j$th column equals
\[g_j \left( \begin{array}{c} x_{1j} \\ \vdots \\ x_{dj} \end{array} \right).\]
Consider the reference lattice $L= Re_1 + \cdots + R e_d$. 
For any set of vertices $\Gamma = \{[L_1], \ldots, [L_n]\}$ in
the building $\mathfrak{B}_d$
we choose matrices $g_1,\ldots,g_n \in GL(V)$ such that $g_i L = L_i$
for all $i$.
The following diagram commutes:
\[
\xymatrix{ \PP(V) \, \ar[rr]^{(g_1^{-1}, \ldots, \,g_n^{-1}) \circ \Delta} \, \ar[d] & \,& \,\PP(V)^n \ar[d] \\
\prod_R \PP(L_i)\, \ar[rr]^{(g_1^{-1}, \ldots, \,g_n^{-1})}\, &\,& \,\PP(L)^n }
\]
Hence the Mustafin variety $\M(\Gamma)$ is isomorphic to the subscheme of 
$\,\PP(L)^n \simeq (\PP^{d-1}_R)^n\,$ 
cut out by the multihomogeneous ideal $\,I_2 \big((g_1, \ldots, g_n) (X)\big) \cap R[X]\,$
in $R[X]$.

\begin{example}[$d=n=3$] \label{ex:dreidrei}
 Let $K = \Q(\!(t)\!)$ and
$\Gamma$ the configuration determined by
\[g_1 = {\rm diag}(t^2,t,1)\,,\,\,
g_2 = {\rm diag}(t^4,t^2,1)\,,\,\,
g_3  = {\rm diag}(t^6,t^3,1).\]
Thus $\Gamma$ lies in the apartment specified by our choice of basis.
The generic fiber of the Mustafin variety $\M(\Gamma)$ is the subscheme
of $(\PP_K^2)^3$ defined by the $2 \times 2$-minors of
\begin{equation*}
(g_1,  g_2, g_3) (X)\quad = \quad
\begin{pmatrix}
x_{11} t^2  & x_{12} t^4  & x_{13}t^6 \,\\
x_{21}t       & x_{22} t^2 & x_{23}t^3 \\
x_{31}        & x_{32}   & x_{33}  
\end{pmatrix},
\end{equation*}
where $x_{1j}$, $x_{2j}$, and~$x_{3j}$ are the homogeneous coordinates of the $j$th factor of $(\PP_K^2)^3$.
The Mustafin variety $\M(\Gamma)$ itself is the intersection of this
ideal with the ring $R[x_{ij}]$.
The special fiber $\M(\Gamma)_\Q$ is the 
subscheme of $(\PP_\Q^2)^3$ defined by the monomial ideal
\begin{eqnarray*}
&
\bigl\langle
x_{11} x_{22},\,
x_{11} x_{32},\,
x_{21} x_{32},\,
x_{11} x_{23},\,
x_{11} x_{33},\,
x_{21} x_{33},\,
x_{12} x_{23},\,
x_{12} x_{33},\,
x_{22} x_{33} 
\bigr\rangle \\ =
& \,\,\,\,
\langle x_{11}, x_{21}, x_{12}, x_{22}  \rangle \,\cap\,
\langle x_{11}, x_{21}, x_{23}, x_{33} 	\rangle \,\cap\,
\langle x_{22}, x_{32}, x_{23}, x_{33} \rangle \,  \\ & \cap \,\,
\langle x_{11}, x_{21}, x_{12}, x_{33} \rangle \,\cap\,
\langle x_{11}, x_{32}, x_{12}, x_{33} \rangle \,\cap\,
\langle x_{11}, x_{32}, x_{33}, x_{23}\rangle.
\end{eqnarray*}
The first three components are isomorphic to $\PP_\Q^2$,
and the last three components are isomorphic to $\PP_\Q^1 \times \PP_\Q^1$.
This special fiber is the \emph{planar} monomial scheme in row 4 of \cite[Table 1]{CS}
and it is an instance of the tropical cyclic polytopes in \cite[\S 4]{BY}. 

By contrast, let us now consider the configuration $\Gamma'$ in $\mathfrak{B}_3$ determined by
\[g_1 = M_1 \cdot {\rm diag}(1,t,t^2) \,,\,\,
g_2 = M_2 \cdot {\rm diag}(1,t,t^2) \,,\,\,
g_3  = M_3 \cdot {\rm diag}(1,t,t^2) ,\]
where $M_1$, $M_2$ and~$M_3$ are generic $3 {\times} 3$-matrices over $\Q$.
Then $\M(\Gamma')$ is the subscheme
of $\,(\PP_R^2)^3\,$ obtained by saturation from the ideal of $2 \times 2$-minors of a matrix
\[ 
\begin{pmatrix}
\star x_{11} + \star x_{21} t + \star x_{31} t^2        &
\star x_{12} + \star x_{22} t + \star x_{32} t^2        &
\star x_{13} + \star x_{23} t + \star x_{33} t^2        \\
\star x_{11} + \star x_{21} t + \star x_{31} t^2        &
\star x_{12} + \star x_{22} t + \star x_{32} t^2        &
\star x_{13} + \star x_{23} t + \star x_{33} t^2        \\
\star x_{11} + \star x_{21} t + \star x_{31} t^2        &
\star x_{12} + \star x_{22} t + \star x_{32} t^2        &
\star x_{13} + \star x_{23} t + \star x_{33} t^2        
\end{pmatrix},
\]
where the stars indicate generic 
scalars in~$\Q$. The special fiber $\M(\Gamma')_\Q$ is
given by
\begin{eqnarray*}
&
\bigl\langle x_{11} x_{12}, x_{11} x_{22}, x_{21} x_{12},
x_{11} x_{13} , x_{11} x_{23} , x_{13} x_{21},
x_{12} x_{13} , x_{12} x_{23}, x_{13} x_{22} ,
x_{21} x_{22} x_{23} 
\bigr\rangle \\ =
& \,\,\,\,
 \langle x_{11}, x_{21}, x_{12}, x_{22} \rangle \,\cap\,
 \langle x_{12},x_{22},x_{13},x_{23} \rangle\,\cap\, 
\langle  x_{11},x_{21},x_{13},x_{23} \rangle \,
 \, \\ & \cap \,\,
 \langle x_{11}, x_{21}, x_{12}, x_{13} \rangle \,\cap\,
 \langle x_{11}, x_{12}, x_{22}, x_{13} \rangle \,\cap \,
 \langle x_{11}, x_{12}, x_{13}, x_{23} \rangle. 
 \end{eqnarray*}
This monomial scheme, denoted $Z$ in \cite[\S 2]{CS},
is the unique Borel-fixed point on the multigraded
Hilbert scheme $H_{3,3}$ of the diagonal embedding
$\,\PP^2 \hookrightarrow \PP^2 \times \PP^2 \times \PP^2$.
\qed
\end{example}

We have the following general structure theorem for Mustafin varieties.

\begin{thm}\label{sec:must;thm:basicfacts}
For a finite subset $\Gamma$ of $\mathfrak{B}_d^0$, the Mustafin variety $\M(\Gamma)$ is an integral, normal, Cohen-Macaulay scheme which is flat and projective over $R$.
Its generic fiber is isomorphic to the 
$(d{-}1)$-dimensional  projective space $\PP(V)$, and its special fiber
$\M(\Gamma)_k$ is reduced, Cohen-Macaulay and connected.
All irreducible components of $\M(\Gamma)_k$ are rational varieties,
and their number is at most
$\binom{n+d-2}{d-1}$, where $n = |\Gamma|$.
\end{thm}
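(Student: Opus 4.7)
The plan is to handle the assertions in order of increasing difficulty, peeling off the formal consequences of the definition first and leaving the reducedness of the special fiber and the combinatorial bound for last. Since $\PP(V)$ is integral, its scheme-theoretic closure in $\prod_R \PP(L_i)$, endowed with the reduced structure, is again integral, and it is projective over $R$ as a closed subscheme of a projective $R$-scheme. Flatness over the DVR $R$ is automatic from integrality of $\M(\Gamma)$ together with the fact that $\M(\Gamma) \to \Spec R$ is dominant, and the generic fiber is $\PP(V)$ by construction of the diagonal map.

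For Cohen-Macaulayness and normality of the total space I would work with the explicit multihomogeneous presentation introduced before the theorem. On affine charts of $(\PP_R^{d-1})^n$ the ideal becomes the ideal of $2 \times 2$ minors of a $d \times n$ matrix of linear forms, to which the classical results of Hochster-Eagon on determinantal ideals apply; the CM and normality properties of the generic $2 \times 2$ minor ideal transfer to our situation after checking that the deformation from a matrix of indeterminates to $(g_1, \ldots, g_n)(X)$ is flat over the regular base $R$. Given Cohen-Macaulayness of $\M(\Gamma)$, flatness makes $\pi$ a regular element, so the special fiber $\M(\Gamma)_k = \M(\Gamma)/\pi \M(\Gamma)$ is Cohen-Macaulay of dimension $d-1$. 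Connectedness of $\M(\Gamma)_k$ then follows from the principle of connectedness of fibers applied to the proper flat morphism $\M(\Gamma) \to \Spec R$ with geometrically connected generic fiber.

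Reducedness of the special fiber is the step I expect to be the main obstacle, since reducedness of the total space does not automatically descend to fibers. My approach would be a reduction to the convex case: for the convex hull $\overline{\Gamma}$ in $\mathfrak{B}_d$, which is finite by \cite{fa, JSY}, Mustafin's theorem \cite{mus} on the Deligne functor provides reducedness of $\M(\overline{\Gamma})_k$, and one would exploit the proper birational morphism $\M(\overline{\Gamma}) \to \M(\Gamma)$, whose source and target share the generic fiber $\PP(V)$, to transfer reducedness down to $\M(\Gamma)_k$ by a Zariski-main-theorem argument that uses normality of $\M(\Gamma)$ already established. If this descent proves delicate, an alternative is a direct Gröbner computation: choose a term order on $R[X]$ refining the $\pi$-adic valuation, and verify that the initial ideal of $I_2((g_1, \ldots, g_n)(X)) \cap R[X]$ modulo $\pi$ is a squarefree monomial ideal.

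For rationality of the components I would invoke the later structural results of the paper: primary components are blow-ups of projective spaces along linear subspace arrangements by Theorem~\ref{thm:primary-components}, and secondary components admit a similarly explicit rational description in Section~5; all such blow-ups are manifestly rational. For the bound $\binom{n+d-2}{d-1}$ I would compute in the Chow ring of $(\PP_k^{d-1})^n$. The class of $\M(\Gamma)_k$ equals the class of the small diagonal $\Delta(\PP^{d-1}) \hookrightarrow (\PP^{d-1})^n$, and intersecting the pairwise diagonals $\Delta_{1, j}$ for $j = 2, \ldots, n$ computes this class as
\[
\sum_{\mathbf{a}} H_1^{a_1} H_2^{a_2} \cdots H_n^{a_n},
\]
the sum ranging over all $\mathbf{a} \in \{0, 1, \ldots, d-1\}^n$ with $\sum_i a_i = (n-1)(d-1)$, every coefficient being $1$. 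Since $\M(\Gamma)_k$ is reduced, its cycle class equals $\sum_C [C]$ with each irreducible component $C$ appearing with multiplicity $1$, and each $[C]$ is a nonnegative integer combination of the monomials above; comparing coefficients gives the upper bound, since the number of admissible $\mathbf{a}$ is precisely $\binom{n+d-2}{d-1}$.
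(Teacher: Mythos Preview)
Your handling of integrality, projectivity, flatness, the generic fiber, connectedness via Zariski's principle, the Chow-ring bound, and the deferral of rationality to Section~5 all match the paper's proof. The substantive divergence is in how you establish Cohen--Macaulayness and normality of $\M(\Gamma)$ and reducedness of $\M(\Gamma)_k$, and here your argument has a genuine gap.

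The Hochster--Eagon step does not apply as stated. On affine charts of $(\PP_R^{d-1})^n$ the ideal cutting out $\M(\Gamma)$ is \emph{not} the ideal of $2\times 2$ minors of $(g_1,\ldots,g_n)(X)$ over~$R$; it is the $\pi$-saturation of that ideal, equivalently $I_2((g_1,\ldots,g_n)(X))\cap R[X]$, and this typically has many more generators than the minors themselves (see Example~\ref{ex:dreidrei}, where the special fiber is cut out by nine monomials, not by the three reduced minors). There is also no evident flat ``deformation from a matrix of indeterminates'' to the specific matrix $(g_1,\ldots,g_n)(X)$: its entries are constrained linear forms in only $dn$ variables, and nothing prevents such a specialization from dropping codimension or acquiring embedded primes over~$R$. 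So neither Cohen--Macaulayness nor normality of the total space is justified by your argument, and consequently your proposed descent of reducedness from the convex hull---which invokes normality of $\M(\Gamma)$ already established---is circular.

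The paper reverses your logical order. It takes as external input the results of~\cite{CS} on the multigraded Hilbert scheme $H_{d,n}$ of the diagonal: every $k$-point of $H_{d,n}$, in particular $\M(\Gamma)_k$, is reduced and Cohen--Macaulay \cite[Theorem~2.1 and Corollary~2.6]{CS}. From this one deduces Cohen--Macaulayness of the total space (since $\pi$ is a nonzerodivisor and $\M(\Gamma)_k$ is CM, $\M(\Gamma)$ is CM along the special fiber; away from it the scheme is the smooth $\PP(V)$), and then normality follows from the criterion \cite[Lemma~4.1.18]{liu}: a scheme flat over a DVR with normal generic fiber and reduced special fiber is normal. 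Your fallback Gr\"obner suggestion for reducedness is essentially what~\cite{CS} carries out; the point is that reducedness and CM of the special fiber are the \emph{input}, and CM and normality of $\M(\Gamma)$ are consequences, not the other way around.
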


\begin{proof}
By construction,
any Mustafin variety $\M (\Gamma)$ is irreducible, reduced and projective over $R$,
and with generic fiber~$\PP(V)$. Since $R$ is a discrete valuation ring, torsion-free implies flat, so $\M(\Gamma)$ is also flat over $R$. We show that the special fiber is connected by Zariski's Connectedness 
Principle~\cite[Theorem~5.3.15]{liu}. Since $\M(\Gamma)$ is proper over $R$, the
group of global sections $\sO_{\M(\Gamma)}(\M(\Gamma))$ is a finite $R$-module.
As it is contained in $\sO_{\PP(V)}(\PP(V))= K$, and $R$ is integrally closed,
we find indeed that the push-forward of $\sO_{\M(\Gamma)}$ is equal to $\sO_{\Spec R}$. Thus, the special fiber is connected.

Each Mustafin variety $\M(\Gamma)$ corresponds to an $R$-valued point in the
multigraded Hilbert scheme $H_{d,n}$ described in \cite{CS}, and its special
fiber $\M(\Gamma)_k$ is a $k$-valued point of $H_{d,n}$.  All $k$-valued points
of $H_{d,n}$ are reduced and Cohen-Macaulay by Theorem~2.1 and Corollary~2.6
in~\cite{CS}.  Since $\pi$ is a non-zero divisor on $\M(\Gamma)$ such that the
subscheme $\M(\Gamma)_k$ it defines is Cohen-Macaulay, $\M(\Gamma)$ is
Cohen-Macaulay along $\M(\Gamma)_k$. Away from $\M(\Gamma)_k$, 
the Mustafin variety $\M(\Gamma)$ is
regular, so $\M(\Gamma)$ is Cohen-Macaulay everywhere.  Finally, $\M(\Gamma)$ is
normal because it is is flat over a discrete valuation ring with normal generic
fiber and reduced special fiber~\cite[Lemma~4.1.18]{liu}.

The Chow ring of $(\PP^{d-1})^n$ (over any field) is $\mathcal A = \Z[H_1, \ldots, H_{n}]/\langle H_1^d, \ldots, H_{n}^d \rangle$, where $H_i$ represents the pullback of the hyperplane class from the $i$th factor. Up to change of coordinates, $\M(\Gamma)_K$ is embedded in $(\PP^{d-1}_K)^n$ as the diagonal. The codimension of this diagonal is $(d-1)(n-1)$ and its rational equivalence class is the sum over all monomials in~$\mathcal A$ of total degree $(d-1)(n-1)$ (see~\cite[Example~8.4.2\,(c)]{fulton} for the case $n=2$, which generalizes easily).
Since the special fiber~$\M(\Gamma)_k$ is a specialization of $\M(\Gamma)_K$, as in~\cite[Section~20.3]{fulton}, $\M(\Gamma)_k$ has the same class in~$\mathcal A$. This class is the sum of the classes of the components of~$\M(\Gamma)_k$. Since each component is effective, its class is 
a sum of non-negative multiples of monomials in~$\mathcal A$, and hence the number of
components is at most the number of  terms in the class of the diagonal, which is~$\binom{n+d-2}{d-1}$. 

The only remaining point is  that the components are
rational varieties. That proof
will be given in Section 5. 
The results in Sections 3 and 4 do not rely on~it.
\end{proof}

We note that the upper bound on the number of components is sharp. The class of examples realizing this upper bound is described below in Remark~\ref{rmk:monomial-max-comps}.

\vspace{1pc}

The following lemma enables us to take
closer look at the components of $\M(\Gamma)_k$.

\begin{lem}\label{lem:projection}
Let $\Gamma' \subset \Gamma$ be finite subsets of $\mathfrak B_d^0$. 
For each irreducible
component~$C$ of the special fiber~$\M(\Gamma')_k$, there is a unique
irreducible component of $\M(\Gamma)_k$ that maps
birationally onto~$C$ via the natural projection $\M(\Gamma)
\rightarrow \M(\Gamma')$.
\end{lem}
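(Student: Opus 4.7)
The plan is to use the valuative criterion of properness. The natural projection $\pi \colon \M(\Gamma) \to \M(\Gamma')$, induced by the product projection $\prod_{L \in \Gamma}\PP(L) \to \prod_{L \in \Gamma'}\PP(L)$, is proper (a morphism between proper $R$-schemes into a separated target) and restricts to the identity on the common generic fiber $\PP(V)$; it is therefore birational, and surjective since its image is closed and contains the dense open $\PP(V) \subset \M(\Gamma')$. By Theorem~\ref{sec:must;thm:basicfacts} both $\M(\Gamma)$ and $\M(\Gamma')$ are integral, normal, $d$-dimensional, and flat over $R$, with equidimensional reduced special fibers of dimension $d-1$. Write $K$ for the common function field $K(\PP(V))$.

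For existence, let $C \subset \M(\Gamma')_k$ be an irreducible component with generic point $\eta_C$. Normality of $\M(\Gamma')$ together with the fact that $\eta_C$ has codimension one makes $A := \sO_{\M(\Gamma'),\eta_C}$ a DVR with fraction field $K$. Applying the valuative criterion to $\pi$, the inclusion of the generic point $\Spec K \hookrightarrow \M(\Gamma)$ together with the canonical map $\Spec A \to \M(\Gamma')$ lifts uniquely to a morphism $\Spec A \to \M(\Gamma)$; let $\xi$ be the image of the closed point. Then $\xi$ lies in $\M(\Gamma)_k$ because $\pi(\xi) = \eta_C \in \M(\Gamma')_k$. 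The induced local inclusion $\sO_{\M(\Gamma),\xi} \hookrightarrow A$ of subrings of $K$ forces $\dim \sO_{\M(\Gamma),\xi} = 1$ (the stalk is dominated by the one-dimensional $A$ inside $K$ yet is not all of $K$, as $\xi$ is not the generic point of $\M(\Gamma)$), so by normality of $\M(\Gamma)$ it is itself a DVR. A short valuation-theoretic observation---two DVRs inside a common field with one dominating the other must share the same normalized valuation and hence coincide---then upgrades the inclusion to an equality $\sO_{\M(\Gamma),\xi} = A$. In particular $\kappa(\xi) = \kappa(\eta_C)$, and since $\M(\Gamma)_k$ is pure of dimension $d-1$, the closure $D := \overline{\{\xi\}}$ is a full irreducible component of $\M(\Gamma)_k$ that maps birationally onto $C$ via $\pi$.

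For uniqueness, let $D'$ be any component of $\M(\Gamma)_k$ with $\pi|_{D'}$ birational onto $C$. Its generic point $\xi'$ is a codimension-one point of $\M(\Gamma)$ with $\pi(\xi') = \eta_C$, so $\sO_{\M(\Gamma),\xi'}$---again a DVR by normality and codimension one---receives a local injection from $A$ inside $K$. The same two-DVRs-in-a-field principle yields $\sO_{\M(\Gamma),\xi'} = A$, and because distinct codimension-one points of a normal integral scheme determine distinct DVRs in its function field, $\xi' = \xi$ and $D' = D$.

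The main obstacle I anticipate is pinning down the local ring $\sO_{\M(\Gamma),\xi}$ produced by the valuative criterion: one must combine the normality of $\M(\Gamma)$ from Theorem~\ref{sec:must;thm:basicfacts} with the valuation-theoretic comparison of dominating DVRs to show it equals $A$ exactly, rather than sitting at a deeper point. Once this is established, the rest is a formal application of the valuative criterion.
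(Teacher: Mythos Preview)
Your approach via the valuative criterion and normality is sound in outline and genuinely different from the paper's, but there is a slip in the existence step. From the lift $\Spec A \to \M(\Gamma)$ you obtain $\sO_{\M(\Gamma),\xi} \hookrightarrow A$ and then assert that domination by the one-dimensional ring $A$ forces $\dim \sO_{\M(\Gamma),\xi}=1$. That inference is false in general: a DVR can dominate a local domain of arbitrary dimension (for instance the DVR at the generic point of the exceptional divisor of the blow-up of $\mathbb A^2_k$ at the origin dominates the two-dimensional ring $k[x,y]_{(x,y)}$). The repair is immediate with information you already recorded: since $\pi(\xi)=\eta_C$, the morphism $\pi$ furnishes the \emph{reverse} local inclusion $A=\sO_{\M(\Gamma'),\eta_C}\hookrightarrow \sO_{\M(\Gamma),\xi}$ inside $K$, and the two inclusions together give $\sO_{\M(\Gamma),\xi}=A$ directly, with no separate dimension argument needed. (Equivalently, $\kappa(\eta_C)\hookrightarrow\kappa(\xi)$ via $\pi$ shows $\operatorname{trdeg}_k\kappa(\xi)\ge d-1$, so $\xi$ is already the generic point of a component of the $(d{-}1)$-dimensional scheme $\M(\Gamma)_k$.) With this fix your uniqueness paragraph goes through unchanged.

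The paper proves the lemma by an entirely different route, working in the Chow ring $\Z[H_1,\ldots,H_n]/\langle H_1^d,\ldots,H_n^d\rangle$ of $(\PP^{d-1})^n$: the class of $\M(\Gamma)_k$ is the multiplicity-free sum of all monomials of degree $(d-1)(n-1)$, each irreducible component carries a subset of these monomials, and one singles out the component over $C$ by tracking a particular monomial under proper push-forward. Your argument is more elementary and in fact shows the general fact that a proper birational morphism to a normal scheme is an isomorphism over every codimension-one point of the target. The paper's argument is more specific to the embedding but yields extra bookkeeping---namely \emph{which} monomials the lifted component carries---that is exploited later, for example in the proofs of Corollary~\ref{cor:primary} and Proposition~\ref{prop:existence-secondarycomp}.
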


\begin{proof}
Let $\Gamma = \{[L_1], \ldots, [L_n]\}$ and $\Gamma' = \{[L_1], \ldots,
[L_{n'}]\}$ be a subset with $n' \leq n$. 
As above, let $\mathcal A = \Z[H_1, \ldots, H_{n}]/\langle H_1^d, \ldots, H_{n}^d \rangle$ be the Chow ring of $(\PP^{d-1})^n$. The class of $\M(\Gamma)_k$ is the sum of all monomials of total degree $(d-1)(n-1)$ and is equal to the sum of the classes of the components of~$\M(\Gamma)_k$. The class of each component is a sum
of non-negative multiples of monomials in~$\mathcal A$, and since the class of $\M(\Gamma)_k$ is multiplicity-free, each component must be a sum of distinct monomials in~$\mathcal A$. 

Similarly, the class of the component $C$ of $\M(\Gamma')_k$ is the
sum of distinct monomials of degree $(d-1)(n'-1)$ in $\mathcal A' = \Z[H_1,
\ldots, H_{n'}]/\langle H_1^d, \ldots, H_{n'}^d\rangle$.
Let $H_1^{a_1} \cdots H_{n'}^{a_{n'}}$ with $a_1 + \cdots + a_{n'} =
(d-1)(n'-1)$ be one of them.  There is a unique component $\tilde C$ in
$\M(\Gamma)_k$ whose class contains the monomial $H_1^{a_1} \cdots
H_{n'}^{a_{n'}} \cdot H_{n'+1}^{d-1} \cdots H_{n}^{d-1}$.  Under the projection
$\M(\Gamma)_k \rightarrow \M(\Gamma')_k$, this class pushes forward to
$H_1^{a_1} \cdots H_{n'}^{a_{n'}}$.  Since $\M(\Gamma)_K \rightarrow
\M(\Gamma')_K$ is an isomorphism, under specialization, the rational equivalence
class of $\M(\Gamma)_k$ pushes forward to the class of $\M(\Gamma')_k$.
Thus, the projection of $\tilde C$ contains the monomial $H_1^{a_1} \cdots
H_{n'}^{a_{n'}}$. Since $C$ is the only component of $\M(\Gamma')_k$ containing
this monomial, it must be the image of $\tilde C$.  Furthermore, since the
coefficient of $H_1^{a_1}\cdots H_{n'}^{a_{n'}}$ is one, the map is birational
and $\tilde C$ is the unique component of $\M(\Gamma)_k$ with this property.
\end{proof}

\begin{cor}\label{cor:primary}
Let $\Gamma = \{[L_1], \ldots, [L_n]\}$ be a 
finite subset of $\mathfrak{B}_d^0$. For every  index $i$,
the special fiber $\M(\Gamma)_k$ has a 
 unique irreducible component $C_i$ with the property that $C_i$ maps
 birationally to $\PP(L_i)_k$ under the projection 
$\PP(L_1)_k \times \cdots \times \PP(L_n)_k \rightarrow \PP(L_i)_k$. 
\end{cor}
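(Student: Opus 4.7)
The plan is to reduce the corollary to an immediate application of Lemma~\ref{lem:projection} by taking the subconfiguration $\Gamma' = \{[L_i]\} \subset \Gamma$ consisting of the single lattice class indexed by~$i$.

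First I would verify that $\M(\Gamma') = \PP(L_i)$ when $\Gamma'$ has a single element. Indeed, Definition~1.1 defines $\M(\Gamma')$ as the closure, with the reduced scheme structure, of the image of the map $\PP(V) \hookrightarrow \PP(L_i)$; but this map is an open immersion whose image is the generic fiber of $\PP(L_i)$ over $R$, which is dense in $\PP(L_i)$. Hence $\M(\Gamma') = \PP(L_i) \simeq \PP^{d-1}_R$, and its special fiber $\M(\Gamma')_k = \PP(L_i)_k \simeq \PP^{d-1}_k$ is irreducible. Moreover, by construction of $\M(\Gamma)$ as a closed subscheme of $\PP(L_1) \times_R \cdots \times_R \PP(L_n)$, the natural projection $\M(\Gamma) \to \M(\Gamma') = \PP(L_i)$ coincides with the restriction of the $i$th coordinate projection $\PP(L_1) \times_R \cdots \times_R \PP(L_n) \to \PP(L_i)$.

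Now Lemma~\ref{lem:projection} applied to the unique irreducible component $C = \PP(L_i)_k$ of $\M(\Gamma')_k$ yields a unique irreducible component $C_i$ of $\M(\Gamma)_k$ that maps birationally onto $\PP(L_i)_k$ under this projection. This is exactly the assertion of the corollary.

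There is essentially no obstacle here beyond the identification $\M(\{[L_i]\}) = \PP(L_i)$, which is formal once one unwinds Definition~1.1 for a singleton configuration; all the substantive content is already packaged in Lemma~\ref{lem:projection}.
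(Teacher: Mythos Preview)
Your proposal is correct and follows exactly the paper's own argument: the paper's proof simply says ``We take $\Gamma' = \{[L_i]\}$, so that $\M(\Gamma') = \PP(L_i)$, and apply Lemma~\ref{lem:projection}.'' Your additional verification that $\M(\{[L_i]\}) = \PP(L_i)$ and that the natural projection is the coordinate projection is a welcome elaboration of what the paper leaves implicit.
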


\begin{proof}
We take $\Gamma' = \{[L_i]\}$, so that $\M(\Gamma') = \PP(L_i)$, and apply
Lemma~\ref{lem:projection}.
\end{proof}

\begin{defn} An irreducible component of $\M(\Gamma)_k$ mapping birationally to the special fiber of the factor $\PP(L_i)$ for some $[L_i] \in \Gamma$ is called a \emph{primary component}. All  other components  of the special fiber are called \emph{secondary components}.
In both ideal decompositions of Example \ref{ex:dreidrei}, the first
three components are primary and the last three components are secondary.
For instance, the variety defined by $\langle x_{11}, x_{21}, x_{12}, x_{22}  \rangle$ is
$(0\mathop{:}0\mathop{:}1) \times (0\mathop{:}0\mathop{:}1) \times \PP^2_k$, and
this maps birationally (in fact, isomorphically)
onto the third factor of $\PP^2_k \times \PP^2_k \times \PP^2_k$.
\end{defn}

\begin{defn}
By an \emph{isomorphism of Mustafin varieties}
$\M(\Gamma)$ and~$\M(\Gamma')$
we mean an $R$-isomorphism between the schemes $\M(\Gamma)$ and~$\M(\Gamma')$ which
preserves the set of primary components. Thus, an isomorphism of Mustafin
varieties induces  a bijection between the defining lattice configurations $\Gamma$ and $\Gamma'$.
\end{defn}

We note that  two Mustafin varieties can be isomorphic as $R$-schemes
without being isomorphic as Mustafin varieties. This is shown in Example 
\ref{ex:HexagonWithThreeTriangles},
which exhibits a strict inclusion $\Gamma \subset \Gamma' $
such that the map $\M(\Gamma') \rightarrow \M(\Gamma)$ is an $R$-isomorphism.
The following result characterizes
the isomorphism classes of Mustafin varieties.

\begin{thm}\label{thm:same-config}
If $\M(\Gamma)$ and $\M(\Gamma')$ are isomorphic Mustafin varieties, then there
exists an element $g$ in $PGL(V)$ such that $\Gamma' = g \cdot \Gamma$ under the
action on subsets of $\mathfrak{B}_d$.
\end{thm}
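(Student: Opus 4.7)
The plan is first to reduce to the case where $\phi$ acts trivially on generic fibers, and then to recover each lattice class $[L_i]$ intrinsically from the local ring of $\M(\Gamma)$ at the generic point of the corresponding primary component. The restriction of $\phi \colon \M(\Gamma) \to \M(\Gamma')$ to the generic fibers is an automorphism of $\PP(V)$, hence an element $g \in PGL(V)$. For any $h \in PGL(V)$, the coordinatewise action of $h$ on a product of projective spaces induces a canonical isomorphism of Mustafin varieties $\M(\Gamma) \to \M(h\Gamma)$ whose generic-fiber restriction is $h$. Applying this construction with $h = g^{-1}$ to $\Gamma'$ and composing with $\phi$ yields an isomorphism of Mustafin varieties $\M(\Gamma) \to \M(g^{-1}\Gamma')$ that is the identity on generic fibers. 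It therefore suffices to prove that any isomorphism $\psi \colon \M(\Gamma) \to \M(\Gamma')$ of Mustafin varieties whose generic-fiber restriction is the identity on $\PP(V)$ forces $\Gamma = \Gamma'$.

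Write $\Gamma = \{[L_1], \dots, [L_n]\}$, let $C_i \subset \M(\Gamma)_k$ be the primary component indexed by $[L_i]$ in Corollary~\ref{cor:primary}, and let $\xi_i$ be its generic point. Since $\psi$ preserves the set of primary components, $\psi(C_i) = C'_{\sigma(i)}$ for some bijection $\sigma$, and $\psi(\xi_i) = \xi'_{\sigma(i)}$ is the generic point of the primary component of $\M(\Gamma')_k$ attached to some $[L'_{\sigma(i)}] \in \Gamma'$. Because $\M(\Gamma)$ and $\M(\Gamma')$ are normal by Theorem~\ref{sec:must;thm:basicfacts} and the points $\xi_i$, $\xi'_{\sigma(i)}$ have codimension one, the local rings $\mathcal{O}_{\M(\Gamma),\xi_i}$ and $\mathcal{O}_{\M(\Gamma'),\xi'_{\sigma(i)}}$ are discrete valuation rings. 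Their fraction fields are canonically identified with the function field of $\PP(V)$, and since $\psi$ is the identity on generic fibers, the two DVRs coincide as subrings of the function field of $\PP(V)$.

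To connect these DVRs with the lattices, use the projection $\pi_i \colon \M(\Gamma) \to \PP(L_i)$. On generic fibers $\pi_i$ is the identity on $\PP(V)$, and it sends $\xi_i$ to the generic point $\eta_i$ of $\PP(L_i)_k$ because $C_i$ maps birationally onto $\PP(L_i)_k$. This yields an inclusion $\mathcal{O}_{\PP(L_i),\eta_i} \hookrightarrow \mathcal{O}_{\M(\Gamma),\xi_i}$ of DVRs with the same fraction field; such an inclusion is automatically an equality, since a DVR admits no proper overring in its fraction field other than the field itself. Running the same argument on the $\Gamma'$ side and combining yields
\[
\mathcal{O}_{\PP(L_i),\eta_i} \;=\; \mathcal{O}_{\PP(L'_{\sigma(i)}),\eta'_{\sigma(i)}}
\]
as subrings of the function field of $\PP(V)$. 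The DVR $\mathcal{O}_{\PP(L),\eta_L}$ is the Gauss valuation attached to the lattice class $[L]$, so distinct classes produce distinct DVRs; hence $[L_i] = [L'_{\sigma(i)}]$ for every $i$ and $\Gamma = \Gamma'$.

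The step I expect to require the most care is the final identification of $\mathcal{O}_{\PP(L),\eta_L}$ with the Gauss valuation attached to the homothety class $[L]$, together with the verification that this assignment is injective on homothety classes. This should follow from a direct computation inside a single apartment of $\mathfrak{B}_d$ containing both lattice classes in question: after diagonalizing the two lattices with respect to a common basis of $V$, one reads off the corresponding Gauss valuations on the homogeneous coordinate ring and observes that they agree precisely when the two exponent vectors differ by a global shift, i.e.\ when the classes coincide.
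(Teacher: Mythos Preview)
Your argument is correct and takes a genuinely different route from the paper's. Both proofs begin with the same reduction to an isomorphism that is the identity on generic fibers, but then diverge. The paper proceeds geometrically: given $[L]\in\Gamma$ mapped to $[L']\in\Gamma'$, it constructs a morphism $\M(\Gamma)\to\PP(L)\times_R\PP(L')$ whose generic fiber is the diagonal, hence factoring through $\M(\{[L],[L']\})$ when $[L]\neq[L']$; it then argues that the $[L]$-primary component of this two-point Mustafin variety would be forced to map birationally onto \emph{both} $\PP(L)_k$ and $\PP(L')_k$, contradicting the uniqueness in Corollary~\ref{cor:primary}. Your approach instead exploits normality of $\M(\Gamma)$ (Theorem~\ref{sec:must;thm:basicfacts}) to identify the local ring at the generic point of each primary component with a discrete valuation ring on the function field of $\PP(V)$, and then shows this DVR is exactly the Gauss valuation attached to the lattice class.

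What each approach buys: the paper's route stays entirely inside the Mustafin-variety formalism and reuses Corollary~\ref{cor:primary} as the punchline, at the cost of building an auxiliary two-point configuration. Your route is more intrinsic and valuation-theoretic---it recovers $[L_i]$ directly from the scheme $\M(\Gamma)$ without passing through any auxiliary construction---but it requires the normality statement from Theorem~\ref{sec:must;thm:basicfacts} and the Gauss-valuation computation you flag at the end. That computation is indeed routine once you put both lattices in a common apartment (as you sketch), and the injectivity follows from comparing the values on the affine coordinates $x_i/x_1$. Either proof is perfectly acceptable; yours is arguably the cleaner conceptual explanation of \emph{why} the lattice class is an invariant of the primary component.
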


\begin{proof}
If $\Gamma = \{[L_1], \ldots, [L_n]\}$ and $\Gamma' = g \Gamma$, then the isomorphism
\[(g, \ldots, g)\colon \mathbb{P}(L_1) \times_R \cdots \times_R \mathbb{P}(L_n) \longrightarrow \mathbb{P}(g L_1) \times_R \cdots \times_R \mathbb{P}(g L_n)\]
restricts to an isomorphism of Mustafin varieties $\M(\Gamma) \rightarrow \M(\Gamma')$, such that the induced map on the generic fiber $\mathbb{P}(V)$ is given by $g$.
Suppose conversely that $\varphi\colon \M(\Gamma) \rightarrow \M(\Gamma')$ is an isomorphism of Mustafin varieties. The  generic fiber of $\varphi^{-1}$ is given by an element $g \in PGL(V)$. As we have just seen, $g$ induces an isomorphsim of Mustafin varieties $\M(\Gamma') \rightarrow \M(g \Gamma')$. Hence after replacing $\Gamma'$ by $g \Gamma'$ we may assume that $\varphi$ is the identity map on the generic fiber. We claim that in this case $\Gamma = \Gamma'$.

Let $[L]$ be a lattice class in $\Gamma$, and let $C$ be the corresponding primary component of $\M(\Gamma)$. Since $\varphi$ is an isomorphism of Mustafin varieties, it maps $C$ to a primary component $C'$ of $\M(\Gamma')$, which corresponds to some lattice class $[L'] \in \Gamma'$.
We define the morphism $h\colon \M(\Gamma) \rightarrow \PP(L) \times_R \PP(L')$ as the product of the natural projection $\M(\Gamma) \rightarrow \PP(L)$ and the composition $\M(\Gamma) \stackrel{\varphi}{\rightarrow} \M(\Gamma') \rightarrow \PP(L')$, where $\M(\Gamma') \rightarrow \PP(L')$ is the natural projection. Then the generic fiber of $h$ is the diagonal embedding of $\PP(V)$ into $\PP(L) \times_R \PP(L')$. Therefore, $h$
induces a morphism from $\M(\Gamma)$ to the closure of $\PP(V)$ in $\PP(L)
\times_R \PP(L')$. Assuming that $[L]$ and $[L']$ are distinct lattice classes,
the closure is the Mustafin variety
$\M(\{[L], [L'] \})$. Note that  $h$ maps the primary component $C$ to the primary component $D$ of $\M(\{[L], [L']\})$ corresponding to $[L]$. The following diagram
is commutative and  $C$ maps birationally to $\PP(L')_k$:
\[
\xymatrix{ C \ar[r] \ar[d]  &  D \ar[d]  \\
\M(\Gamma)_k  \ar[r]^(.4){h} \ar[d]^{\varphi} &  \M(\{[L],[L']\})_k  \ar[d]\\
\M(\Gamma')_k \ar[r]  & \PP(L')_k }
\]
We conclude that the component $D$ is mapped birationally to
$\PP(L')_k$ under the projection on the right. However, by Corollary
\ref{cor:primary}, $D$ can't map birationally to both $\PP([L'])_k$
and~$\PP([L])_k$, so $[L]$ and~$[L']$ must be the same lattice point.
Hence, $\Gamma = \Gamma'$.
\end{proof}

With every Mustafin variety $\M(\Gamma)$ we associate a simplicial complex
representing the intersections between the irreducible components of its special fiber:

\begin{defn}
The \emph{reduction complex} of $\M(\Gamma)$ is the simplicial complex with one vertex for each component of the special fiber $\M(\Gamma)_k$, where a set of vertices forms a simplex if and only if the intersection of the corresponding components is non-empty. 
\end{defn}

Note that we also define reduction complexes in situations where the special fiber does not have simple normal crossings. In the case of normal crossings, our definition agrees with the standard one.

In Example \ref{ex:dreidrei}, the reduction complex of $\M(\Gamma)$ is a tetrahedron with triangles attached at two adjacent edges, while that of $\M(\Gamma')$ is the full $5$-simplex.
Reduction complexes for $d=2$ are characterized in Theorem~\ref{thm:d-2-fiber}.

We say that $\Gamma$ is \emph{convex} if whenever $[L]$ and~$[L']$ are in $
\Gamma$ then any vertex of the form $[\pi^a L \cap \pi^b L']$ is also in~$\Gamma$.
This is the notion of convexity used in \cite{fa} and in \cite{JSY}. The convex hull
of $\Gamma \subset \mathfrak{B}^0_d$ is the smallest convex subset of $\mathfrak{B}^0_d$ containing $\Gamma$.  We  call
$\Gamma$ \emph{metrically convex} if $\Gamma$ is closed under taking geodesics
in the natural graph metric on $\mathfrak{B}^0_d$, i.e.\ if $[L]$ and $[L']$
are in $\Gamma$ and ${\rm dist}([L],[L'']) + {\rm dist}([L''],[L']) = {\rm
dist}([L],[L'])$ then $[L'']$ is in $\Gamma$.  This equality 
holds for $L'' = \pi^a L \cap \pi^b L'$, so metrically convex implies 
convex, but not conversely.  Mustafin studied the varieties
$\M(\Gamma)$ only for metrically convex configurations $\Gamma$.
Note that in the context of Euclidean buildings there is yet another notion of convexity, which is
 induced from the Euclidean distances in apartments, but this notion of convexity does not play a role in our paper.

\begin{figure}
\centering
\begin{tabular}{ccc}
\includegraphics[width=5.1cm]{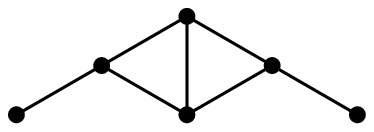} &\quad &
\includegraphics[width=2.4cm]{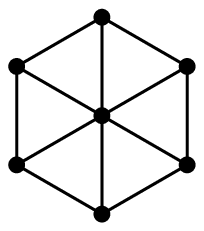} \\ &\quad & \\
\includegraphics[width=5.4cm]{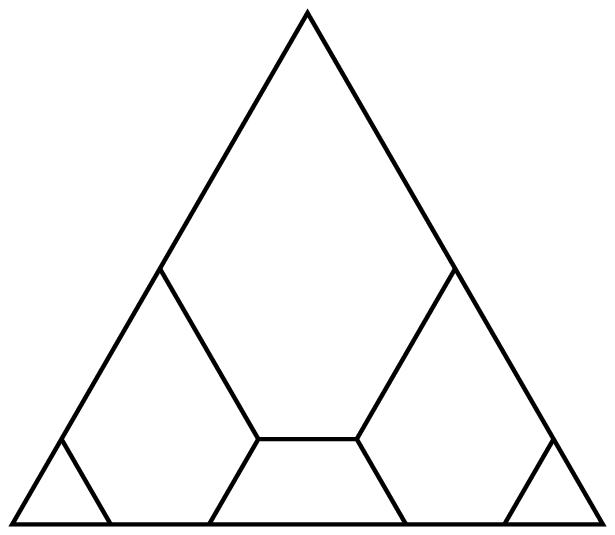}  & \quad &
\includegraphics[width=5.4cm]{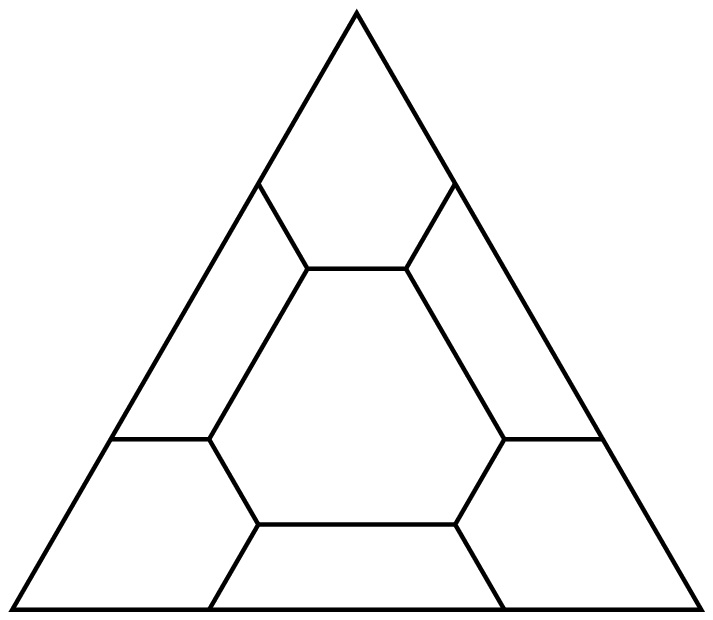}
\end{tabular}
\caption{Convex configurations in $\mathfrak{B}_3$ and the special fibers of their Mustafin varieties}
\label{fig:convex}
\end{figure}

The following theorem about convex configurations is illustrated by Figure  \ref{fig:convex}.

\begin{thm}\label{sec:must;thm:convex}
If $\Gamma$ is a convex subset consisting of $n$ lattice points in the building~$\mathfrak{B}_d$, then 
the Mustafin variety $\M(\Gamma)$ is regular, and its special fiber $\M(\Gamma)_k$ consists of $n$ smooth irreducible components that intersect transversely. In this case,
the reduction complex of $\M(\Gamma)$ is isomorphic to the simplicial subcomplex of $\mathfrak{B}_d$ induced by $\Gamma$. 
\end{thm}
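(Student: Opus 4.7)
The plan is to argue by induction on $n = |\Gamma|$, realizing $\M(\Gamma)$ as an iterated blow-up of $\PP^{d-1}_R$ along smooth centers contained in the special fiber. The base case $n=1$ is immediate: $\M(\{[L_1]\}) = \PP(L_1) \cong \PP^{d-1}_R$ is regular with irreducible, smooth special fiber, and the reduction complex is a single vertex.

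For the inductive step I would first establish a combinatorial peeling lemma: every convex subset $\Gamma \subset \mathfrak{B}_d^0$ of cardinality $n \geq 2$ contains a vertex $[L_n]$ such that $\Gamma' := \Gamma \setminus \{[L_n]\}$ is still convex and $[L_n]$ is adjacent to at least one vertex $[L_j] \in \Gamma'$ in the sense of (\ref{eq:adjacency}). After rescaling a representative we may assume $\pi L_j \subsetneq L_n \subsetneq L_j$. By Lemma~\ref{sect:mus;lem:neighbours}, the quotient $W := L_n/\pi L_j$ is a proper nontrivial $k$-subspace of $L_j/\pi L_j$, so it determines a linear subvariety $\PP(W) \subset \PP(L_j)_k$ sitting inside the primary component $C_j'$ of $\M(\Gamma')_k$.

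The heart of the argument is to analyze the projection $p\colon \M(\Gamma) \to \M(\Gamma')$ in local coordinates adapted to a basis of $L_j$ compatible with $L_n$. Writing out the multihomogeneous ideal $I_2\big((g_1,\ldots,g_n)(X)\big) \cap R[X]$ as in Example~\ref{ex:dreidrei}, the equations contributed by the column indexed by $[L_n]$ should cut out, after saturation, precisely the blow-up of $\M(\Gamma')$ along the proper transform $\widetilde{Z}$ of $\PP(W)$. Convexity of $\Gamma$ is used here in two ways: it ensures that no extraneous embedded or secondary components appear in the saturation, and it guarantees (by the inductive hypothesis applied to $\Gamma'$) that $\widetilde{Z}$ is smooth and meets the already-existing components of $\M(\Gamma')_k$ transversely. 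Since blowing up a regular scheme along a smooth center preserves regularity, $\M(\Gamma)$ is regular. The exceptional divisor of $p$ is a projective bundle over $\widetilde Z$, and by Corollary~\ref{cor:primary} it is exactly the new primary component $C_n$ indexed by $[L_n]$; this $C_n$ is smooth and crosses $C_1,\ldots,C_{n-1}$ transversely by standard blow-up theory.

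This accounts for all $n$ components of $\M(\Gamma)_k$, and by Theorem~\ref{sec:must;thm:basicfacts} the special fiber is reduced, so no secondary components remain. Finally, to identify the reduction complex with the simplicial subcomplex of $\mathfrak{B}_d$ induced by $\Gamma$, I would show inductively that $C_n \cap C_i \neq \emptyset$ if and only if $[L_n]$ and $[L_i]$ are adjacent in $\mathfrak{B}_d$ (the exceptional divisor $C_n$ meets $C_i$ exactly when $\widetilde Z \subset C_i'$), and more generally that a family of components has nonempty common intersection precisely when the corresponding lattice classes span a simplex of the building. The main obstacle is the local-coordinate identification of $p$ as the blow-up of $\M(\Gamma')$ along a \emph{smooth} center $\widetilde{Z}$: the naive center $\PP(W)$ may intersect several components of $\M(\Gamma')_k$ in a way that looks singular, and convexity is precisely the hypothesis that rescues smoothness and transversality. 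Executing this verification, together with the combinatorial peeling lemma, constitutes the technical core of the proof.
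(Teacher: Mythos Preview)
Your inductive blow-up strategy is a reasonable and essentially self-contained route, but it is \emph{not} the route the paper takes. The paper's proof is almost entirely by citation: regularity of $\M(\Gamma)$ and the fact that the special fiber has $n$ transversely intersecting components are imported from Faltings~\cite{fa} (who proved this for convex $\Gamma$ via a representability argument for the Deligne functor), and Mustafin~\cite{mus} is invoked for the metrically convex case. Smoothness of the individual components is deduced from the later Proposition~\ref{thm:nice-blowups}, which shows that when the linear spaces $W_j$ in Theorem~\ref{thm:primary-components} are closed under intersection, the iterated blow-up of $\PP^{d-1}_k$ is smooth. For the reduction complex, the paper argues the two inclusions separately: that the simplicial structure on $\Gamma$ embeds in the reduction complex follows by projecting from the metric convex closure; the reverse inclusion is handled by projecting a hypothetical bad edge $\{[L_1],[L_2]\}$ to the Mustafin variety of its convex hull in a common apartment and invoking the mixed-subdivision description of Section~\ref{sec:conf-one-apartm} to see that the corresponding cells are disjoint.

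Your approach, by contrast, rederives Faltings' result from scratch via peeling and blow-ups. This is close in spirit to how Faltings himself argues, and it has the virtue of being self-contained. The trade-off is that the two pieces you flag as the ``technical core'' are genuinely hard. First, the peeling lemma is not automatic: you need a vertex $[L_n]$ whose removal keeps $\Gamma$ convex \emph{and} which is adjacent to some surviving vertex, and you should expect to have to choose $[L_n]$ carefully (e.g.\ extremal for a suitable partial order on lattice classes) rather than arbitrarily. Second, and more seriously, the smoothness of the proper transform $\widetilde Z$ of $\PP(W)$ in $\M(\Gamma')$ is exactly where convexity does real work, and your sketch does not yet explain \emph{how}: the center can meet several components of $\M(\Gamma')_k$, and you need to check that those intersections are themselves the centers coming from lattices already in $\Gamma'$ (this is where the hypothesis $[\pi^a L \cap \pi^b L'] \in \Gamma$ enters). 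Without that verification the blow-up step collapses. So your outline is sound as a plan, but the paper avoids these difficulties entirely by outsourcing them to \cite{fa} and~\cite{mus}.
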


\begin{proof}
Mustafin~\cite[Proposition~2.2]{mus} established this result for
configurations that are metrically convex, and we need to argue that it
also holds for all configurations that are convex in the sense above.
Under the convex hypothesis, Faltings~\cite{fa} showed that $\M(\Gamma)$ is regular and that
there are $n$ components in the special fiber that intersect transversely.
Note that Faltings uses the opposite convention for projective spaces, where points in $\PP(L)$ are hyperplanes (rather than lines) in $L$, so he also takes the dual notion of convexity, in terms of $L + L'$ instead of $L \cap L'$.
The smoothness of each irreducible component of $\M(\Gamma)_k$
follows from our Proposition~\ref{thm:nice-blowups} below.

We now prove the assertion about the reduction complex.
Consider the simplicial complex on $\Gamma$ induced from the
simplicial structure on $\mathfrak{B}_d^0$. The induced complex
is always a subcomplex of the reduction complex of $\M(\Gamma)$,
even if $\Gamma$ is not convex. Indeed, if $\overline \Gamma \supset \Gamma$
is the metric convex closure of $\Gamma$, then for any simplex in~$\Gamma$, the
corresponding components in $\M(\overline\Gamma)_k$ intersect by~\cite{mus}, and
hence, so do their images in $\M(\Gamma)_k$.

Suppose that $\Gamma$ is convex and
the  reduction complex  contains a simplex
that is not in the induced simplicial complex. Since
the latter is a flag complex, we can assume that the simplex is an edge
 $\{[L_1], [L_2]\}$. The two
 corresponding primary components intersect in
 $\M(\Gamma)_k$, and hence so do their
 components in $\M(\Gamma')_k$ where
 $\Gamma'  $ is the convex hull of $[L_1]$ and $[L_2]$ in $\mathfrak{B}_d$.
As in Proposition \ref{prop:classify2} below,
we can fix an apartment that contains both
$[L_1]$ and $[L_2]$. By construction, the
tropical line segment spanned by $[L_1]$ and $[L_2]$ in that apartment
has at least one additional lattice point $[L_3]$.
Consider the mixed subdivision $\Delta_{\{[L_1],[L_2],[L_3]\}}$
as in Section 4. A combinatorial argument
shows that the maximal cells indexed by
$[L_1]$ and $[L_2]$ do not intersect in that subdivision.
This contradicts to the assumption that
their primary components do intersect.
\end{proof}

For convex configurations $\Gamma$, the special fiber has only primary components,
and no secondary components. Without convexity assumptions,
a typical Mustafin variety has many secondary components.
Theorem~\ref{sec:must;thm:basicfacts} implies the following upper bound:
\begin{equation}
\label{eq:secondarynumber}
 \# \,\,\hbox{\rm secondary components of} \,\, \M(\Gamma)_k \,\,\, \leq \,\,\,
\binom{n+d-2}{d-1} - n.
\end{equation}
Note that for $d=2$, the special case of trees, the number above is zero.

\begin{remark}\label{rmk:monomial-max-comps}
The upper bound in
(\ref{eq:secondarynumber}) is attained 
when $\Gamma$ is of monomial type, as defined below.
This follows from the degree argument in the second-to-last paragraph of
the proof of Theorem \ref{sec:must;thm:basicfacts}
and the fact that the special fiber is always reduced.
\end{remark}

\begin{defn}
A configuration $\Gamma $ in the Bruhat-Tits building $\mathfrak{B}_d$ is 
of \emph{monomial type} if there exist bases for 
the $R$-modules $L_1, \ldots, L_n$ such that the multihomogeneous ideal 
in $k[X]$ that defines
$\M(\Gamma)_k$ is generated by monomials in the dual bases.
\end{defn}

We believe that monomial type is a generic condition.
To make this precise, we need to consider configurations of $n$ points with 
$\mathbb{Q}$-rational coordinates in $\mathfrak{B}_d$, and the statement would be that rational
configurations of monomial type are dense in the configuration space
for $n$ points in $\mathfrak{B}_d$. This would lead us define a
Gr\"obner fan structure on configuration spaces of buildings, a
topic we hope to return to in the future.

\section{Trees}

The building $\mathfrak{B}_2$ is an infinite tree. Any two points $v$ and $w$ in $\mathfrak{B}_2$ can be connected by a unique path. The lattice points in $\mathfrak{B}^0_2$
determine the simplicial structure on $\mathfrak{B}_2$.
We regard $\mathfrak{B}^0_2$ as a metric space, where 
adjacent lattice points have distance one.

Given any finite configuration $\Gamma \subset \mathfrak B^0_2$, the induced metric is a \emph{tree metric} on $\Gamma$. The tree that realizes this metric
is the  convex hull of $\Gamma$ in  $\mathfrak{B}_2^0$ with  the induced metric.
We denote this tree by $T_\Gamma$ and we 
refer to it as the  \emph{phylogenetic tree} of $\Gamma$.
Thus $T_\Gamma$ is a metric tree with $n$ labeled nodes
that include the leaves. Tree metrics
are studied in computational biology \cite[\S 2.4]{ascb}, where
it is well known that the phylogenetic tree
$T_\Gamma$  is uniquely determined by the metric on $\Gamma$.
The \emph{Neighbor-Joining Method}
\cite[Algorithm 2.41]{ascb} 
rapidly reconstructs the phylogenetic tree $T_\Gamma$
from the $\binom{n}{2}$ pairwise distances.

We are interested in the Mustafin variety $\M(\Gamma)$ specified by
the configuration $\Gamma \subset \mathfrak{B}_2^0$. First we show that
the metric tree $T_\Gamma$ can be read off the geometry of $\M(\Gamma)$. The following result is also proven in \cite[Proposition 2.3]{mum} by a different argument.

\begin{prop}\label{prop:d-2-comp}
If $\Gamma \subset \mathfrak{B}_2^0$, then each irreducible
component of the special fiber $\M(\Gamma)_k$  is
isomorphic to $\PP^1_k$, and
these irreducible components are in bijection with~$\Gamma$.
\end{prop}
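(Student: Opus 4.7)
The plan is to combine Corollary~\ref{cor:primary} with a direct analysis of the two-lattice case and the component bound in Theorem~\ref{sec:must;thm:basicfacts}. By Corollary~\ref{cor:primary}, for each index $i \in \{1,\ldots,n\}$ there is a unique irreducible component $C_i \subset \M(\Gamma)_k$ that projects birationally onto the factor $\PP(L_i)_k \cong \PP^1_k$. Since Theorem~\ref{sec:must;thm:basicfacts} bounds the number of irreducible components by $\binom{n+d-2}{d-1} = n$, it will suffice to prove that the $C_i$ are pairwise distinct (so they exhaust all components) and that each $C_i$ is isomorphic to $\PP^1_k$.

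First I would settle the case $n=2$ by a direct computation. Given two distinct lattice classes $[L], [L'] \in \mathfrak{B}_2^0$, one can choose a basis $e_1, e_2$ of $V$ such that $L = Re_1 + Re_2$ and $L' = Re_1 + \pi^m Re_2$ for some $m \geq 1$, since any two vertices of $\mathfrak{B}_2$ lie in a common apartment. Taking $g = \mathrm{id}$ and $g' = \mathrm{diag}(1,\pi^m)$, the Mustafin variety $\M(\{[L],[L']\}) \subset \PP^1_R \times \PP^1_R$ is cut out by the single polynomial $\pi^m x_{11}x_{22} - x_{12}x_{21}$. Reducing mod $\pi$, its special fiber is the union of the divisors $\{x_{12}=0\}$ and $\{x_{21}=0\}$ in $\PP^1_k \times \PP^1_k$, each of which is a copy of $\PP^1_k$ obtained as a fiber of one of the two coordinate projections. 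In particular the primary component for $[L]$ collapses to a single point in the $\PP(L')_k$ factor, and symmetrically for $[L']$.

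The key step for general $n$ is to show that $C_i$ is contracted to a single point in $\PP(L_j)_k$ for every $j \neq i$. I would argue by projecting via $\pi_{ij} \colon \M(\Gamma) \to \M(\{[L_i],[L_j]\})$. The image $\pi_{ij}(C_i)$ is an irreducible closed subscheme of $\M(\{[L_i],[L_j]\})_k$, and the composition $C_i \to \M(\{[L_i],[L_j]\})_k \to \PP(L_i)_k$ equals the original birational projection $C_i \to \PP(L_i)_k$, which is in particular dominant. Hence $\pi_{ij}(C_i)$ is a $1$-dimensional irreducible closed subscheme of the $1$-dimensional special fiber $\M(\{[L_i],[L_j]\})_k$ that surjects onto $\PP(L_i)_k$, so it must coincide with the primary-for-$[L_i]$ component from the previous paragraph, which is contracted to a point in $\PP(L_j)_k$.

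It follows that $C_i$ sits inside a subscheme of the form $\PP(L_i)_k \times \prod_{j \neq i}\{p_j\} \cong \PP^1_k$, and being an irreducible closed subscheme that maps birationally onto this $\PP^1_k$ it must equal it, giving $C_i \cong \PP^1_k$. Distinctness is then immediate: for $i \neq j$ the component $C_i$ surjects onto $\PP(L_i)_k$ while $C_j$ is contracted to a point in that same factor, so $C_i \neq C_j$. Together with the upper bound from Theorem~\ref{sec:must;thm:basicfacts} this yields exactly $n$ irreducible components, in bijection with $\Gamma$. The main technical point is in the previous paragraph: verifying that the image of $C_i$ under $\pi_{ij}$ is a full component and not a proper subscheme of it. This uses not merely dominance but the birationality of the composition $C_i \to \PP(L_i)_k$, so that the image has the same dimension as the ambient primary component.
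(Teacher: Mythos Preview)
Your proof is correct and reaches the same conclusion, but the route differs from the paper's in two respects.

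For the isomorphism $C_i \cong \PP^1_k$, the paper argues abstractly: since $C_i \to \PP(L_i)_k$ is birational, the normalization $\tilde C_i$ is a normal projective curve mapping birationally to $\PP^1_k$, hence $\tilde C_i \cong \PP^1_k$; the factorization $\tilde C_i \to C_i \to \PP^1_k$ then forces $C_i \to \PP^1_k$ to be an isomorphism. You instead compute the two-point case explicitly and use it to show that $C_i$ is literally embedded as $\PP(L_i)_k \times \prod_{j\neq i}\{p_j\}$ inside the product. Your argument is more concrete and yields the extra information that each component is a coordinate slice, a fact the paper only records later (in the remark following Theorem~\ref{thm:d-2-fiber}). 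The paper's normalization trick, on the other hand, avoids the two-lattice computation entirely and would generalize to any situation where a component maps birationally to a smooth curve.

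For the distinctness of the $C_i$, the paper simply asserts ``there are precisely $n$ primary components,'' relying implicitly on the Chow-class argument behind Lemma~\ref{lem:projection}: the class of $C_i$ contains the monomial $\prod_{l\neq i} H_l$, and these monomials are distinct. Your argument makes distinctness explicit via the projection behavior ($C_i$ surjects onto $\PP(L_i)_k$ while $C_j$ collapses there), which is self-contained and does not appeal back to the intersection-theoretic proof of Lemma~\ref{lem:projection}.
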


\begin{proof}
By Theorem~\ref{sec:must;thm:basicfacts}, the special fiber
$\M(\Gamma)_k$ has at most $n$ components, where $n = |\Gamma|$.
By Corollary~\ref{cor:primary}, there are precisely $n$ primary components.
We conclude that every component is primary.  Also by Corollary~\ref{cor:primary},
each primary component $C$ maps birationally onto $\PP^1_k$. If $\tilde{C}$ denotes the
normalization of $C$, then the induced map $\tilde{C} \rightarrow C \rightarrow \PP^1_k$ must
be an isomorphism. Hence $C$ is isomorphic to $\PP^1_k$.
\end{proof}

\begin{figure}
\centering
\includegraphics[width=5.1cm]{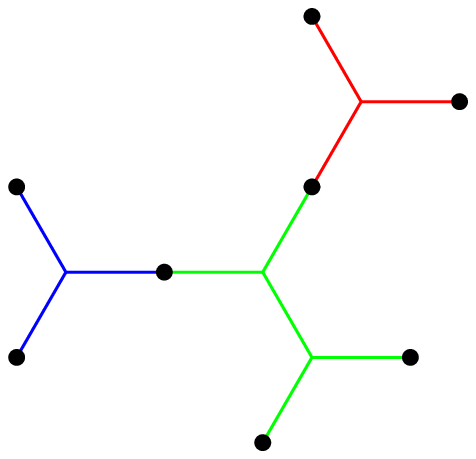} \qquad \qquad
\includegraphics[width=5.3cm]{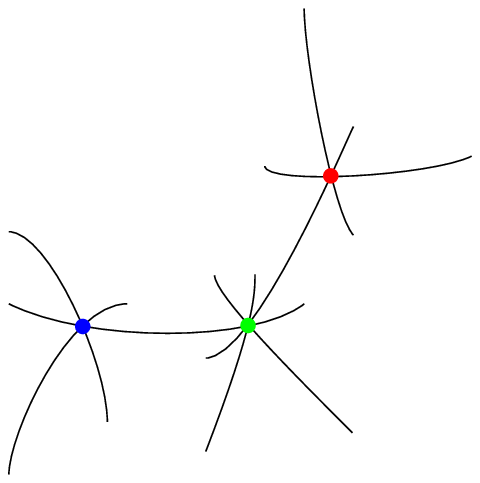}  
\caption{A configuration $\Gamma$ of $n=8$ points in 
$\mathfrak{B}_2$ whose associated phylogenetic tree $T_\Gamma$ has six leaves.
The corresponding special fiber is a tree of  eight
projective lines.}
\label{fig:tree}
\end{figure}

\begin{remark}
Because of Proposition~\ref{prop:d-2-comp}, in the rest of this section we will
speak interchangeably of the elements of~$\Gamma$ and the components
of the special fiber~$\M(\Gamma)_k$.
\end{remark}

 If $\Gamma$ consists of $n= 2$ points then their
distance $t$ 
has the following interpretation.

\begin{lem} \label{lem:tree2}
If $\Gamma= \{[L],[M]\}$, then the special fiber $\M(\Gamma)_k$ consists of two
projective lines $\,\PP_k^1\,$
that meet in one point. In a neighborhood of that point,
the Mustafin variety $\M(\Gamma)$ is defined
by a local equation of the form  $\,xy = \pi^t$, where $t = \mbox{dist}([L], [M])$.
\end{lem}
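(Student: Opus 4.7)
The plan is to reduce to an explicit diagonal calculation by using the fact that the two vertices $[L]$ and $[M]$ lie in a common apartment of $\mathfrak{B}_2$. Concretely, I would fix a basis $e_1,e_2$ of $V$ simultaneously diagonalizing both lattices, and choose homothety representatives so that $L = Re_1 + Re_2$ and $M = Re_1 + \pi^t Re_2$, where $t = \dist([L],[M])$. Taking $g_1$ to be the identity and $g_2 = \mathrm{diag}(1,\pi^t)$, one has $g_iL = L_i$, so the general setup of Section~2 applies.

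By the explicit description of the defining ideal developed just before Example~\ref{ex:dreidrei}, the Mustafin variety $\M(\Gamma)$ is cut out in $\PP(L)\times_R\PP(L)\simeq \PP^1_R\times_R\PP^1_R$ by the multihomogeneous ideal $I_2\bigl((g_1,g_2)(X)\bigr)\cap R[X]$, where
\[
(g_1,g_2)(X) \;=\; \begin{pmatrix} x_{11} & x_{12} \\ x_{21} & \pi^t x_{22} \end{pmatrix}.
\]
Its single $2\times 2$ minor $f = \pi^t x_{11}x_{22} - x_{12}x_{21}$ already lies in $R[X]$ and is irreducible with unit content, so $I_2\cap R[X]=(f)$, giving a single principal equation for $\M(\Gamma)$.

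Reducing modulo $\pi$ gives the ideal $(x_{12}x_{21})\subset k[X]$, whose zero locus in $\PP^1_k\times_k\PP^1_k$ is the union of the two lines $\{x_{12}=0\}$ and $\{x_{21}=0\}$. Each is a copy of $\PP^1_k$, and they meet only at the point $P=\bigl((1{:}0),(0{:}1)\bigr)$, matching Proposition~\ref{prop:d-2-comp}. To extract the local equation near $P$, work in the affine chart $x_{11}\neq 0$, $x_{22}\neq 0$ with local coordinates $x = x_{12}/x_{22}$ and $y = x_{21}/x_{11}$; the equation $f=0$ becomes $xy=\pi^t$, as required.

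The only point needing care is verifying that the coordinate choice above indeed realizes the graph-metric distance $t$. This follows from the adjacency criterion~(\ref{eq:adjacency}): one checks that $\pi^t L \subset M \subset L$ holds strictly and that no smaller exponent works, so the path $[L] = [Re_1+Re_2], [Re_1+\pi Re_2], \ldots, [Re_1+\pi^t Re_2] = [M]$ has length $t$, giving the desired distance in $\mathfrak{B}_2^0$. I don't anticipate any serious obstacle beyond bookkeeping of conventions to produce the right power of $\pi$ in the local equation.
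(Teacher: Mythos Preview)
Your proposal is correct and follows essentially the same route as the paper: choose a basis putting both lattices in diagonal form, write down the single $2\times 2$ minor, and pass to affine coordinates at the intersection point to read off $xy=\pi^t$. The only differences are notational (you use the $x_{ij}$ conventions from Section~2 rather than $(x_i{:}y_i)$) and that you spell out the saturation step and the distance verification a bit more explicitly than the paper does.
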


\begin{proof}
Since the two lattice classes
lie in a common apartment of the tree $\mathfrak{B}_2$,
there is a basis $\{e_1, e_2\}$ of~$V$ such that $L
= R e_1 + R e_2$ and $M = R e_1 + \pi^t R e_2$.
The Mustafin variety $\M(\Gamma)$ is  the subscheme of  $\,\PP^1_R \times \PP^1_R\,$
defined by the bihomogeneous  ideal $\,\langle x_2 y_1 - \pi^t x_1 y_2 \rangle$,
where $(x_1: x_2)$ and $(y_1 : y_2)$ are coordinates on the two factors.  
Hence the special fiber consists of two copies of
$\PP^1_k$ meeting transversely in one point.  In the affine coordinates $\,x= x_2/x_1\,$ and
$\,y = y_1/y_2$, the equation of $\M(\Gamma)$ becomes $\,x y = \pi^t$.
\end{proof}

The natural number $t$ is known as the \emph{thickness} of the singularity of
$\M(\Gamma)$.
The thickness~$t$ is invariant under changes of coordinates because in a minimal
resolution of singularities, there are exactly $t-1$ exceptional curves mapping
to the singular point on $\M(\Gamma)$. This is shown in~\cite[Lemma 10.3.21]{liu}.

We have
the following formula for the thickness $t$ in terms of two
matrices $g,h \in GL_2(K)$ that represent
  $L = g L_0$ and $M = h L_0$ relative to a reference lattice $L_0 \subset V$:
\begin{equation}
\label{eq:thickness}
 t \,\,\, = \begin{matrix}
v \bigl( {\rm det}(g_1 h_1) {\rm det}(g_2 h_2) - {\rm det}(g_1 h_2) {\rm det}(g_2 h_1)\bigr)
\qquad \qquad \qquad \qquad
\\ \quad - \,2 \cdot {\rm min}  \bigl\{
  v( {\rm det}(g_1 h_1)),  v( {\rm det}(g_1 h_2)),  v( {\rm det}(g_2 h_1)),  v( {\rm det}(g_2 h_2))
  \bigr\}. \end{matrix}
\end{equation}
Here $g_i$ and $h_j$ denote the columns of $g$ and $h$. To prove  (\ref{eq:thickness}), we note that 
$\M(\Gamma)_k$ is defined by
$\bigl\langle\,
 {\rm det}(g_1 h_1) x_{11} x_{12} + 
 {\rm det}(g_1 h_2) x_{11} x_{22} + 
  {\rm det}(g_2 h_1) x_{21} x_{12} + 
 {\rm det}(g_2 h_2) x_{21} x_{22} 
\bigr\rangle $,
and we change coordinates on $\PP^1_R \times \PP^1_R$
to eliminate the two middle terms.
The formula is invariant under coordinate transformations
that multiply $g$ or $h$ on the right by an element of $SL_2(R)$, but
we do not know a simple direct argument for this invariance.

The following theorem describes the correspondence between Mustafin varieties
$\M(\Gamma)$ and their phylogenetic trees $T_\Gamma$. 

\begin{thm}\label{thm:d-2-tree}
The isomorphism class of the Mustafin variety $\M(\Gamma)$ determines the tree $T_\Gamma$. 
Every phylogenetic tree  whose maximal valency is at most one more than the
cardinality of the residue field $k$
  arises in this manner from a configuration $\Gamma \subset \mathfrak{B}_2^0$.
\end{thm}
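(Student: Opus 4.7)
The proof naturally splits into the two assertions, which I plan to handle separately.

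For the first assertion, that the isomorphism class of $\M(\Gamma)$ determines $T_\Gamma$, the quickest route is to quote Theorem~\ref{thm:same-config}: any isomorphism of Mustafin varieties $\M(\Gamma)\xrightarrow{\sim}\M(\Gamma')$ forces $\Gamma' = g\cdot\Gamma$ for some $g\in PGL(V)$, and since $PGL(V)$ acts by isometries on the tree $\mathfrak{B}_2$, the induced map $T_\Gamma\to T_{\Gamma'}$ is an isomorphism of labeled metric trees. I would also point out that $T_\Gamma$ can be reconstructed intrinsically from $\M(\Gamma)$: by Proposition~\ref{prop:d-2-comp} the primary components of $\M(\Gamma)_k$ are canonically labeled by $\Gamma$, and for every pair $\{[L_i],[L_j]\}\subset\Gamma$ the projection of Lemma~\ref{lem:projection} onto $\M(\{[L_i],[L_j]\})$ together with Lemma~\ref{lem:tree2} exhibits $\mathrm{dist}([L_i],[L_j])$ as the thickness of the singularity on that two-lattice model, and the full pairwise distance matrix determines $T_\Gamma$ (Neighbor-Joining or the four-point condition).

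For the realization direction, suppose $T$ is a phylogenetic tree with $n$ labeled nodes (including all leaves), integer edge lengths, and maximum valency at most $|k|+1$. The plan is: first, subdivide each edge of length $\ell$ into $\ell$ unit segments, producing an enlarged tree $T'$ whose maximum valency still equals that of $T$ (subdivision points have valency $2$); second, embed $T'$ injectively as a subtree of $\mathfrak{B}_2^0$ by a greedy inductive construction rooted at an arbitrary labeled node; third, take $\Gamma$ to be the images of the $n$ labeled nodes of $T$. The inductive embedding proceeds by placing the root at any lattice class and, at each boundary vertex, attaching unplaced children to fresh neighbors in $\mathfrak{B}_2^0$. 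Lemma~\ref{sect:mus;lem:neighbours} identifies the neighbors of a vertex $[M]$ with the $|k|+1$ non-trivial proper subspaces of $M/\pi M$, so that after the (at most one) parent edge is used there remain at least $|k|$ free neighbors, which is enough to accommodate any valency up to $|k|+1$. To verify $T_\Gamma \cong T$, I would observe that the image of $T'$ is a finite connected subtree of $\mathfrak{B}_2^0$ containing $\Gamma$, hence contains the convex hull of $\Gamma$; conversely, since every leaf of $T$ is labeled, every subdivision vertex of $T'$ lies on a geodesic between two labeled nodes, hence in the convex hull. Therefore the convex hull coincides with the image of $T'$, and restricting the induced metric to $\Gamma$ recovers $T$.

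The main obstacle is the tightness of the valency bound in the embedding step: it is exactly the condition $v\leq |k|+1$ that allows a vertex of $T'$ to be placed while preserving injectivity of the embedding, since the parent edge consumes one of the $|k|+1$ available directions out of any vertex of $\mathfrak{B}_2^0$. Everything else in the argument is essentially bookkeeping; the forward direction reduces to a single application of Theorem~\ref{thm:same-config}, and the verification $T_\Gamma = T$ in the reverse direction is combinatorial once the greedy embedding has been constructed.
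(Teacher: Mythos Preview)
Your proposal is correct and follows essentially the same approach as the paper: the forward direction is deduced from Theorem~\ref{thm:same-config} (with the alternative intrinsic reconstruction via projections to $\M(\{[L_i],[L_j]\})$ and the thickness computation of Lemma~\ref{lem:tree2}), and the realization direction is by isometrically embedding the tree into $\mathfrak{B}_2$ using the valency bound. Your embedding argument is in fact more explicit than the paper's, which simply asserts the embedding exists; the paper also adds the side remark (not needed for the proof) that different embeddings can yield non-isomorphic Mustafin varieties via cross-ratio invariants.
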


\begin{proof}
The first statement follows from Theorem \ref{thm:same-config}. Alternatively, we can argue using Lemma~\ref{lem:tree2}. 
Given the Mustafin variety $\M(\Gamma)$ as an $R$-scheme, the components
of its special fiber $\M(\Gamma)_k$ are labeled by $\Gamma$.
We then recover the tree metric $d_\Gamma$ on the convex hull $T_\Gamma$ as follows.
For $v,w \in \Gamma$, the projection
$\M(\Gamma)\rightarrow\M(\{v,w\})$  contracts all components of $\M(\Gamma)_k$ other than
$v$ and~$w$. 
By \cite[Proposition 8.3.28]{liu}, this contraction morphism between normal
fibered surfaces is
unique up to unique isomorphism. 
In a neighborhood of the intersection point of these two
components, $\M(\{v,w\})$ is defined by an equation of the form $xy- \pi^t$,
as in Lemma \ref{lem:tree2}.
The exponent $ t = d_\Gamma(v,w) $ is the distance
between $v$ and $w$ and coincides with the thickness of the singularity in $\M(\{v,w\})$.
Therefore we can construct the metric on $\Gamma$ from the geometry of $\M(\Gamma)$. 

Let $T$ be any phylogenetic tree with $n$ labeled leaves
and positive integral edge lengths. Assuming that its maximal valency is smaller or equal to
$|k| +1$,  we can embed $T$ isometrically into the building $\mathfrak{B}_2$ in such 
a way that the leaves are mapped to lattice points in $\mathfrak{B}_2^0$.
However, different embeddings may lead to non-isomorphic
Mustafin varieties. For example, if a vertex in $\Gamma$ has degree
four in $T_\Gamma$, then it intersects four other components, and the cross ratio
between the coordinates of these intersection points is an invariant of $\M(\Gamma)_k$.
Different cross ratios can occur for the same tree~$T_\Gamma$.
\end{proof}

We now discuss the special fiber~$\M(\Gamma)_k$, starting with its reduction complex.

\begin{thm}\label{thm:d-2-fiber}
The maximal simplices of the reduction complex 
of $\M(\Gamma)$ correspond to the connected
components of the punctured tree $T_\Gamma \backslash \Gamma$. The vertices in each maximal cell are the elements of $\Gamma$ in the closure of the corresponding component.
Thus, two irreducible components $v$ and $w$ of the special fiber $\M(\Gamma)_k$
intersect if
and only if the unique geodesic between $v$ and~$w$ in $T_\Gamma$ does not contain any other
vertex $u$ in~$\Gamma$. 
\end{thm}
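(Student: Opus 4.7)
Proof proposal: I would prove the biconditional (the ``Thus'' clause) first and then derive the maximal-simplex correspondence from it, handling the two directions of the biconditional separately.

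For the necessity direction, suppose that some $u \in \Gamma \setminus \{v, w\}$ lies on the geodesic from $v$ to $w$ in $T_\Gamma$. Since $\mathfrak{B}_2$ is a tree the three lattice classes are collinear, hence lie in a common apartment, and I can choose a basis so that $v = [Re_1 + Re_2]$, $u = [Re_1 + \pi^a Re_2]$, $w = [Re_1 + \pi^{a+b} Re_2]$ for some $a, b > 0$. A direct computation of the $2 \times 2$ minors of the associated matrix shows that $\M(\{v, u, w\})_k$ is the monomial subscheme of $(\PP_k^1)^3$ cut out by $\langle x_2 y_1,\, x_3 y_1,\, x_3 y_2 \rangle$. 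Identifying primary components through birationality of the projections to individual factors gives $D_v = V(x_2, x_3)$, $D_u = V(y_1, x_3)$, $D_w = V(y_1, y_2)$; the first and third disagree on the second factor, so $D_v \cap D_w = \emptyset$. By Lemma \ref{lem:projection}, the projection $\M(\Gamma) \to \M(\{v,u,w\})$ sends $C_v$ onto $D_v$ and $C_w$ onto $D_w$, so $C_v \cap C_w = \emptyset$.

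For sufficiency, suppose no $\Gamma$-vertex other than $v$ and $w$ lies on their geodesic. The convex hull $T_\Gamma \subset \mathfrak{B}_2^0$ is metrically convex (being a subtree of the tree $\mathfrak{B}_2$), so Theorem \ref{sec:must;thm:convex} presents $\M(T_\Gamma)_k$ as a reduced union of smooth $\PP^1_k$ components $\tilde C_{[M]}$ with transverse node intersections reproducing the tree structure. Combining Proposition \ref{prop:d-2-comp} with Lemma \ref{lem:projection}, the projection $p\colon \M(T_\Gamma) \to \M(\Gamma)$ restricts on special fibers to an isomorphism $\tilde C_{[L]} \stackrel{\sim}{\to} C_{[L]}$ for each $[L] \in \Gamma$ (a birational morphism between smooth projective lines is an isomorphism), while every other $\tilde C_{[M]}$ collapses to a single point of $\M(\Gamma)_k$. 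Along the geodesic $v = v_0, v_1, \ldots, v_m = w$, write $q_i = p(\tilde C_{v_i})$ for $0 < i < m$; applying $p$ to the transverse nodes $n_i = \tilde C_{v_i} \cap \tilde C_{v_{i+1}}$ forces $q_1 = q_2 = \cdots = q_{m-1} =: q$, while comparison with $p(n_0)$ and $p(n_{m-1})$ places $q$ inside both $C_v$ and $C_w$. (The case $m = 1$ is immediate: $p(n_0) \in C_v \cap C_w$.)

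For the maximal-simplex statement, the pairwise criterion just established determines the $1$-skeleton of the reduction complex. In a tree, a subset $S \subseteq \Gamma$ is pairwise edge-connected in that graph if and only if the Steiner subtree of $S$ inside $T_\Gamma$ has no interior $\Gamma$-vertex, equivalently $S$ lies in the closure of a single connected component $C$ of $T_\Gamma \setminus \Gamma$. Extending the chain-collapsing argument to the full component $C$ shows that all of its boundary $\Gamma$-vertices share one common image point in $\M(\Gamma)_k$, so such an $S$ spans a genuine simplex of the reduction complex, not merely a pairwise clique. The claimed bijection between maximal simplices and connected components of $T_\Gamma \setminus \Gamma$, together with the identification of their vertex sets as the $\Gamma$-elements in the closure, follows. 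The main obstacle is the chain-collapsing step in the sufficiency argument: it depends delicately on the fact that the maps $\tilde C_{[L]} \to C_{[L]}$ are genuine isomorphisms rather than only birational, and on the transversality of the nodes in $\M(T_\Gamma)_k$, and the same bookkeeping then feeds directly into the maximal-simplex identification.
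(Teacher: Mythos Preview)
Your proof is correct and follows essentially the same approach as the paper. Both arguments use the projection from $\M(\overline{\Gamma})$ (with $\overline{\Gamma}$ the set of lattice points in $T_\Gamma$) together with Theorem~\ref{sec:must;thm:convex} for the sufficiency direction, observing that the intermediate components along a geodesic (or, in the paper's phrasing, the components indexed by a connected piece of $T_\Gamma\setminus\Gamma$) collapse to a single point; and both treat necessity by the explicit computation of $\M(\{v,u,w\})_k$ for three collinear lattice classes followed by Lemma~\ref{lem:projection}. The only organizational difference is that you establish the pairwise ``Thus'' clause first and then upgrade to the maximal-simplex statement, whereas the paper runs the connected-component argument directly; the content is the same. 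The step you flag as the main obstacle---that $\tilde C_{[M]}$ with $[M]\notin\Gamma$ maps to a single point---is likewise asserted without detailed justification in the paper; it follows because each component of $\M(\overline{\Gamma})_k$ sits in $\prod\PP(L_i)_k$ as $\PP^1$ in one factor times a point in every other factor, so forgetting the $[M]$-factor leaves a point.
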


\begin{proof}
Let $\,\overline \Gamma = T_\Gamma \cap \mathfrak{B}_2^0 \,$
be the set of all lattice points in the convex hull of  $\Gamma$.
Since $\Gamma \subset \overline \Gamma$,  we have a
projection from $\M(\overline\Gamma)$ to~$\M(\Gamma)$,
and hence from $\M(\overline\Gamma)_k$ to $\M(\Gamma)_k$.
By Theorem~\ref{sec:must;thm:convex}, two
components of the special fiber $\M(\overline \Gamma)_k$ intersect if and only if their vertices
are adjacent (i.e.\ have distance~$1$) in the simplicial structure on~$\mathfrak{B}_2$. 
Consider a  connected component $C$ of $T_\Gamma \backslash \Gamma$. If $C$ is an edge in $\mathfrak{B}_2$, then the adjacent vertices correspond to two intersecting components. Otherwise, the irreducible components of $\M(\Gamma)_k$ corresponding to the lattice points on $C$ form a $1$-dimensional connected subset of $\M(\Gamma)_k$.  Each of these components is contracted in the projection, so the
union of all components in $C$ projects to a single point in $\M(\Gamma)_k$. 

All irreducible components
in $\M(\overline\Gamma)_k$ corresponding to 
points of~$\Gamma$ lying in the closure of the connected component $C$ in $T_\Gamma$
intersect one of the components in $C$. Hence all these components 
 intersect in a common point in $\M(\Gamma)_k$,
and we conclude that the corresponding points of $\Gamma$ form a simplex
in the reduction complex.

It remains to be seen that there are no other simplices in the
reduction complex. In other words, we must show
that there are no other points of 
intersection between components of~$\M(\Gamma)_k$. Suppose that $[L_1]$
and~$[L_3]$ are not in the closure of a single connected component of
$T_\Gamma \backslash \Gamma$. Then the path between them contains at least
one other element $[L_2]$ of $\Gamma$. We explicitly compute the Mustafin
variety of the triple $\Gamma' = \{[L_1], [L_2], [L_3]\}$ in order to show that $[L_1]$ and~$[L_3]$
do not intersect in~$\M(\Gamma')_k$. Since all three points
lie in a common apartment of $\mathfrak{B}_2$,  
we can choose a basis $\{e_1,e_2\}$  for~$ V$
such that $L_i = R e_1 \oplus \pi^{s_i} R e_2$ with $0 = s_1 < s_2 < s_3$.
The ideal of $\M(\Gamma')$ is therefore
\begin{equation*}
  \bigl\langle \,\pi^{s_2}x_1y_2 - x_2y_1\,,\,
  \pi^{s_3}x_1y_3 - x_3y_1\,,\,
  \pi^{s_3-s_2}x_2y_3 - x_3y_2 \,\bigr\rangle,
\end{equation*}
where $(x_i:y_i)$ are the coordinates for
$\PP(L_i) \simeq \PP_R^1$. The ideal of the special fiber~is
\begin{equation*}
\langle x_2y_1, x_3y_1, x_3y_2\rangle \,\,=\,\,
\langle x_2, x_3\rangle
\cap \langle y_1, x_3 \rangle
\cap \langle y_1, y_2 \rangle.
\end{equation*}
The prime ideals on the right are the primary components for $[L_1]$, $[L_2]$ and~$[L_3]$.
 It is easy to see that the lines defined by the first and last
 ideal do not intersect in $(\PP_k^1)^3$.
In the projection from $\M(\Gamma)$ to $\M(\Gamma')$, the component of
$\M(\Gamma)_k$ indexed by $[L_i]$
maps to the corresponding component in $\M(\Gamma')_k$. The
projective lines indexed by $[L_1]$ and~$[L_3]$ are disjoint in~$\M(\Gamma)_k$ 
because their images are disjoint in $\M(\Gamma')_k$.
\end{proof}

\begin{ex} \rm
The reduction complex of the configuration in Figure \ref{fig:tree}
is a green tetrahedron which has two triangles, colored red and blue,
attached at two of its vertices. \qed
\end{ex}

\begin{remark}
Our discussion leads to the following description of  the singularities of $\M(\Gamma)_k$:
Every point of  the special fiber $\M(\Gamma)_k$ is locally isomorphic to a union of coordinate axes. 
This is shown in \cite[Proposition 2.3]{mum}. We can prove it as follows:
Recall that $\M(\Gamma)_k$ is a subscheme of $\prod\PP(L_i)_k$ and that each component
projects isomorphically to exactly one of the factors. Therefore, the inverse of
this isomorphism composed with one of the other projections must be constant. In
other words, the $i$th component is embedded as the product of $\PP(L_i)$ with a
point from each of the other factors $\PP(L_j)$ for $j \neq i$. Whenever
two or more components meet, they can be written as the union of coordinate axes.
The type of singularities appearing here is called \emph{TAC singularities} in \cite{to}, where degenerations of curves with this type of singularities are investigated.
\end{remark}

Our next goal is to explain the connection to the combinatorial 
data arising in the study of the multigraded Hilbert scheme $H_{2,n}$
in \cite[\S 4]{CS}. Among the $k$-points on that Hilbert scheme
are the special fibers of any Mustafin variety for $d=2$.
Let us now characterize the configurations of monomial type.  Recall that $\Gamma \subset \mathfrak{B}_2^0$ is 
of {monomial type} if  the ideal of
$\M(\Gamma)_k$ is generated by monomials in suitable bases.

\begin{prop} \label{prop:monotype}
A configuration $\Gamma \subset \mathfrak{B}^0_2$ has monomial type if and only if
every element of $\Gamma$ is either a leaf or is in the interior of an edge
in the phylogenetic tree~$T_\Gamma$.
\end{prop}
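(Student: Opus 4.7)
My plan is to analyze, for each pair of distinct lattice classes $[L_i], [L_j] \in \Gamma$, the point $p_{ji} \in \PP(L_i)_k$ obtained as the image of the primary component $C_j$ of $\M(\Gamma)_k$ under the canonical projection $\prod_\ell \PP(L_\ell)_k \to \PP(L_i)_k$. The crucial lemma to extract is that $p_{ji}$ equals the \emph{tree-direction from $[L_i]$ toward $[L_j]$}: the $1$-dimensional $k$-subspace of $L_i/\pi L_i$ that, via Lemma \ref{sect:mus;lem:neighbours}, corresponds to the unique neighbor of $[L_i]$ in $T_\Gamma$ lying on the geodesic toward $[L_j]$. To prove this lemma I would apply Lemma \ref{lem:projection} so that $C_j$ maps birationally onto the primary component for $[L_j]$ in the two-point Mustafin variety $\M(\{[L_i],[L_j]\})_k$, and then read off the image in $\PP(L_i)_k$ from the explicit description in Lemma \ref{lem:tree2}, in which $L_i = Rf_1 + Rf_2$ and $L_j = Rf_1 + \pi^t Rf_2$; the two primary components of the two-point special fiber are cut out by $y_1$ and $x_2$, so the image is the coordinate point spanned by $\bar f_1$, which is the desired tree-direction.

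For the forward direction, suppose for contradiction that some $[L_i] \in \Gamma$ has at least three neighbors in $T_\Gamma$. Since $T_\Gamma$ is the convex hull of $\Gamma$, each such neighbor lies on the geodesic from $[L_i]$ to some $\Gamma$-vertex in its branch, so one can choose $\Gamma$-vertices $[L_{j_1}], [L_{j_2}], [L_{j_3}]$ in three distinct branches at $[L_i]$. The three corresponding neighbors of $[L_i]$ are distinct vertices of $\mathfrak{B}_2^0$ and therefore, by Lemma \ref{sect:mus;lem:neighbours}, correspond to three pairwise distinct $1$-dimensional subspaces of $L_i/\pi L_i$. The crucial lemma then yields three pairwise distinct points $p_{j_1 i}, p_{j_2 i}, p_{j_3 i}$ on $\PP(L_i)_k \simeq \PP^1_k$. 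If $\Gamma$ were of monomial type, each primary component $C_{j_\ell}$ would be cut out by a monomial prime, forcing $p_{j_\ell i}$ to be one of only two coordinate points of $\PP(L_i)_k$ in the basis witnessing monomial type; three pairwise distinct coordinate points on $\PP^1_k$ do not exist, giving the contradiction.

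For the reverse direction, assume every element of $\Gamma$ has at most two neighbors in $T_\Gamma$. For each $i$, choose a basis $\{e_1^{(i)}, e_2^{(i)}\}$ of $L_i$ so that the at-most-two $1$-dimensional subspaces of $L_i/\pi L_i$ arising from neighbors of $[L_i]$ in $T_\Gamma$ lie among $\{k \bar e_1^{(i)}, k \bar e_2^{(i)}\}$, which is possible exactly because of the degree-at-most-two hypothesis. By the crucial lemma every $p_{ji}$ with $j \neq i$ is then a coordinate point in the chosen basis, so each primary component $C_j \subseteq \prod_\ell \PP(L_\ell)_k$ is the coordinate subscheme $\prod_{\ell \neq j} \{p_{j\ell}\} \times \PP(L_j)_k$, which is cut out by a monomial prime in the multihomogeneous ring $k[X]$. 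Since $\M(\Gamma)_k$ is reduced by Theorem \ref{sec:must;thm:basicfacts}, its defining ideal equals the intersection of the primes defining its irreducible components $C_j$, and any intersection of monomial ideals is monomial, so $\Gamma$ is of monomial type. The main obstacle is establishing the crucial lemma in an intrinsic, coordinate-free manner: Lemma \ref{lem:tree2} is phrased in a particular apartment-adapted basis, so one must reinterpret the local computation there in terms of the intrinsic subspace structure of $L_i/\pi L_i$ before applying the crucial lemma in a general configuration.
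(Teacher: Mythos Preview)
Your proof is correct and follows essentially the same route as the paper's: both hinge on the observation that the components $C_j$ with $j\neq i$ project to points on $\PP(L_i)_k$, that these points are precisely the tree-directions at $[L_i]$, and hence monomial type holds if and only if there are at most two such points per factor. The only difference is cosmetic---the paper phrases the equivalence via a factor-by-factor torus action (monomial $\Leftrightarrow$ torus-invariant $\Leftrightarrow$ the projection points lie among two torus-fixed points) and asserts without proof what you isolate as the ``crucial lemma,'' so your explicit verification via Lemma~\ref{lem:tree2} and Lemma~\ref{sect:mus;lem:neighbours} makes the argument more complete.
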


\begin{proof}
The configuration $\Gamma$ is of monomial type if and only if there is a linear torus
action on $\PP(L_1)_k \times \cdots \times \PP(L_n)_k$ such that
the special fiber $\M(\Gamma)_k$ is
invariant. On each factor $\PP(L_i)_k$, picking a linear torus action is equivalent to picking
two points~$0$ and $\infty$. Such a torus action can be lifted to an action on $\PP(L_1)_k \times \cdots \times \PP(L_n)_k$ leaving the other factors invariant. 
This action fixes the component of $\M(\Gamma)_k$
that maps isomorphically to $\PP(L_i)_k$.
   The other components however project to  points on $\PP(L_i)_k$. Hence a torus action on 
 $\PP(L_i)_k$ that leaves $\M(\Gamma)_k$ invariant exists if and only if
 there are at most two such points, and this happens if and only if
$[L_i]$ has degree at most~$2$ in~$T_\Gamma$.
\end{proof}

\begin{figure}
\centering
\vskip -0.2cm
\includegraphics[width=6.1cm]{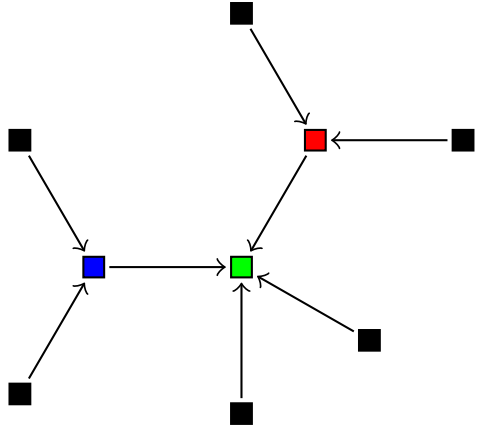} 
\caption{The Mustafin variety in Figure \ref{fig:tree} is of monomial type.
The monomial ideal of its special fiber is represented by
a tree with $9$ nodes and $8$ directed edges, as in \cite{CS}.}
\label{fig:monotree}
\end{figure}

According to \cite[Theorem 4.2]{CS},
 the Hilbert scheme $H_{2,n}$ has
precisely $2^n (n+1)^{n-2}$ points that represent
monomial ideals, and these are indexed by
 trees on $n+1$ unlabeled nodes with $n$ labeled directed edges.
For configurations $\Gamma$ of monomial type, we can  use Theorem~\ref{thm:d-2-fiber} to 
derive the \emph{monomial tree} representation of $\M(\Gamma)_k$
as in \cite[\S 4]{CS} from the phylogenetic tree
$T_\Gamma$. That combinatorial transformation of trees is as follows.

Assume that $n > 1$. 
We introduce one node for each leaf
of $T_\Gamma$ and one node for each connected component 
of $T_\Gamma \backslash \Gamma$. Thus the total number of
nodes is $n+1$.
For each $v \in \Gamma$ that is a leaf in $T_\Gamma$ we introduce one edge
between the node
corresponding to $v$  and the component of $T_\Gamma \backslash \Gamma$
that is incident to $v$.
Each non-leaf $v \in \Gamma$ is incident to two components
of $T_\Gamma \backslash \Gamma$, by Proposition \ref{prop:monotype}, and
we introduce an edge between these nodes as well. Thus the total
number of edges is $n$. Now, the monomial tree is directed by
orienting each of the $n$ edges 
according to the choice of $0$ and~$\infty$
for that copy of $\PP^1$. In particular, 
if every point of $\Gamma$ is a leaf
in $T_\Gamma$, then the monomial tree is the star tree $Z$
in \cite[Example 4.9]{CS}.
By reversing this construction, we can show that each of the
$2^n (n+1)^{n-2}$  monomial trees arises from a configuration
in the Bruhat-Tits tree $\mathfrak{B}_2$.

\begin{cor}
Every monomial ideal in the multigraded Hilbert scheme $H_{2,n}$
arises as the special fiber of a Mustafin variety $\M(\Gamma)$
for some $n$-element set $\,\Gamma \subset \mathfrak{B}_2^0$.
\end{cor}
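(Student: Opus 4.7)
The plan is to reverse the explicit construction sketched in the paragraph preceding the corollary, which takes a monomial-type configuration $\Gamma \subset \mathfrak{B}_2^0$ to a directed tree $M_\Gamma$ on $n+1$ unlabeled nodes with $n$ labeled oriented edges. By~\cite[Theorem 4.2]{CS}, such trees are exactly the monomial ideals in $H_{2,n}$, so it suffices to show that every such $M$ arises as $M_\Gamma$ for some $\Gamma$.

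Given $M$, I would build a candidate phylogenetic tree $T$ by subdividing: for each edge $e_i$ of $M$ (labeled $i\in\{1,\dots,n\}$) insert a new vertex $v_i$ in its interior, declared to lie in $\Gamma$. The interior nodes of $M$ (those of degree $\geq 2$) are retained as non-$\Gamma$ branching vertices of $T$, while the leaves of $M$ are discarded, so that each $v_i$ corresponding to a leaf edge of $M$ becomes a leaf of $T$. Assign each half-edge length one. By construction, every element of $\Gamma = \{v_1,\dots,v_n\}$ has degree one or two in $T$, so Proposition~\ref{prop:monotype} will guarantee monomial type once $\Gamma$ is realized as a configuration inside~$\mathfrak{B}_2$.

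To realize $T$ in the building, I would choose $K$ with residue field $k$ large enough that $|k|+1$ exceeds the maximum valency in $T$ (equivalently, the maximum degree in $M$), and invoke Theorem~\ref{thm:d-2-tree} to embed $T$ isometrically into $\mathfrak{B}_2$ with each $v_i$ landing at a lattice point. A direct verification then shows that the forward construction applied to this $\Gamma$ reproduces $M$ as an unoriented tree: the leaves of $T_\Gamma$ are precisely those $v_i$ coming from leaf-edges of $M$ and contribute their expected leaf nodes of $M$; the connected components of $T_\Gamma\setminus\Gamma$ are exactly the branching vertices of $T$ together with their incident half-edges, which are in bijection with the interior nodes of $M$; and each $v_i$ contributes the edge labeled $i$ with the correct endpoints.

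The main, and essentially only, remaining step is to realize the prescribed orientations. An orientation of $e_i$ in $M$ is just a choice of ``$0$'' and ``$\infty$'' on the component of $\M(\Gamma)_k$ corresponding to $[L_i]$, i.e.\ a choice of basis for $L_i$. Since these bases can be chosen independently on each lattice without affecting either the monomial type of $\Gamma$ or the underlying unoriented monomial tree, picking each basis so as to match the desired orientation of $e_i$ yields $M_\Gamma = M$ with the correct directed structure, completing the proof.
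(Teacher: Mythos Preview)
Your proposal is correct and follows essentially the same approach as the paper. The paper's entire argument is the single sentence preceding the corollary, ``By reversing this construction, we can show that each of the $2^n (n+1)^{n-2}$ monomial trees arises from a configuration in the Bruhat-Tits tree $\mathfrak{B}_2$,'' and you have simply written out what that reversal means: subdivide each edge of $M$ to place the labeled vertices $v_i$, discard the original leaves, realize the resulting tree inside $\mathfrak{B}_2$ via Theorem~\ref{thm:d-2-tree}, and then choose bases to match orientations. One small note: your use of ``branching vertices'' for the retained nodes is slightly loose, since interior nodes of $M$ of degree~$2$ also survive and give components of $T_\Gamma \setminus \Gamma$; but your parenthetical ``those of degree $\geq 2$'' already makes clear you mean all non-leaf nodes, so the verification goes through as you describe.
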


\section{Configurations in One Apartment}
\label{sec:conf-one-apartm}

We will now investigate Mustafin varieties $\M(\Gamma)$ determined by
configurations $\Gamma$ that are contained in a single
apartment $A$ of the Bruhat-Tits building  
$\mathfrak{B}_d$.
The apartment  equals $A = X_\ast (T) \otimes_\Z \R$ for
a maximal torus $T$, and we identify
\[ A  \,\, = \,\, \R^d / \R(1, 1,\ldots, 1). \]
Scholars in tropical geometry use the term
 {\em tropical projective torus} for the apartment $A$ together with its integral
structure $A \cap \mathfrak{B}_d^0$.
We recall from Section~\ref{sec:struct-must-vari} that 
\begin{equation}
\label{bijection} \{\pi^{m_1} R e_1 + \ldots + \pi^{m_d} R e_d \} \mapsto (-m_1, \ldots, -m_d) + \R(1,\ldots, 1)
\end{equation}
is a bijection between the set of lattice classes in diagonal form with respect
to the basis $e_1,\ldots, e_d$ and the set of vertices in $A$. 
We define the distance between two points $u = (u_1,\ldots, u_d)$  and $v =
(v_1,\ldots, v_d) $ in the apartment $ A$ as the variation
\[ \dist (u,v) =  \max_i \{u_i - v_i\} - \min_i  \{u_i - v_i\} \,\,=\,\,
\max \bigl\{ u_i -v_i - u_j + v_j \,: \, i \neq j   \bigr\}. \]
If $u$ and $v$ are lattice points in~$A$, then $\,\dist (u,v)\,$ coincides with the
combinatorial distance between lattice classes which we discussed in
Section 2.
 
The apartment $A$ is a \emph{tropical semimodule}
 with tropical vector addition
 $\,\min\{u, v\} \,=\, (\min\{u_1,v_1\},\ldots, \min\{u_d,v_d\})\,$ 
and   scalar multiplication
$\,\lambda + \, u \,=\, (\lambda + u_1,\ldots,\lambda + u_d)$.
 A subset $S$ of $A$ is \emph{tropically convex}
 if, for all $u,v \in S$ and $\lambda, \mu \in \R$, the
 element $\min \{\lambda + u, \mu + v\}$ is
  also in $S$. For relevant basics on tropical convexity see \cite{ds, JSY}.

  We fix a finite configuration $\Gamma = \{u^{(1)},\ldots,u^{(n)}\}$ 
  of lattice points in $A \cap \mathfrak{B}_d^0$. The point 
  $u^{(i)} = (u_{i1}, \ldots,u_{id})$ represents the diagonal lattice
  $\,L^{(i)} = \pi^{-u_{i1}} R e_1 + \cdots +  \pi^{-u_{id}} R e_d$.
  Tropical convex combinations correspond to 
  convex combinations of diagonal lattices in~$\mathfrak B_d$. 
  This is made precise by the following lemma.
    Recall (e.g.~from \cite{BY, ds, djs, JSY}) that the \emph{tropical polytope} or \emph{tropical convex hull}, $  \operatorname{tconv}(\Gamma)$,
  is the smallest tropically convex subset of $A$ containing~$\Gamma$.
  The following lemma shows that this tropical convex hull corresponds to the
  convex hull of a set of lattice classes defined in Section~\ref{sec:struct-must-vari}. 
  
  \begin{lem} The bijection (\ref{bijection}) induces a bijection between
   the lattice points in $\operatorname{tconv}(\Gamma)$
and the lattice classes in the convex hull of 
$\Gamma$ in the Bruhat-Tits building~$\mathfrak{B}_d$.
  \end{lem}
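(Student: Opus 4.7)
The strategy is to verify that the bijection (\ref{bijection}) intertwines the building-theoretic convex combination $([L], [L']) \mapsto [\pi^a L \cap \pi^b L']$ with the tropical one $(u,v) \mapsto \min(\lambda + u, \mu + v)$. Once this correspondence is in place, the lattice convex hull of $\Gamma$ and the set of lattice points in $\operatorname{tconv}(\Gamma)$ both arise as closures of $\Gamma$ under matching operations.

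First I would carry out the elementary computation: for diagonal lattices $L = \sum_i \pi^{m_i} R e_i$ and $L' = \sum_i \pi^{m'_i} R e_i$, one has
\[ \pi^a L \cap \pi^b L' \;=\; \sum_{i=1}^d \pi^{\max(m_i + a,\, m'_i + b)} R e_i, \]
which under (\ref{bijection}) is sent to $\min(-a + u,\, -b + v)$, where $u, v$ are the images of $[L], [L']$. This observation has two immediate consequences: the building convex hull of a diagonal configuration $\Gamma$ remains diagonal (so the bijection is defined throughout), and each binary intersection corresponds to a binary tropical combination with \emph{integer} scalars $\lambda = -a$, $\mu = -b$. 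A short induction using associativity of $\min$ then shows that the iterated binary closure agrees with the set of all finitary tropical combinations $\min_i(\lambda_i + u^{(i)})$ whose scalars $\lambda_i$ lie in $\Z$.

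The main obstacle lies in reconciling the integer scalars forced by the building operation with the real scalars permitted in the definition of $\operatorname{tconv}(\Gamma)$. Concretely, I need to show that every lattice point $w \in \operatorname{tconv}(\Gamma)$ admits an expression $w = \min_i(\lambda_i + u^{(i)})$ with $\lambda_i \in \Z$. Starting from an arbitrary expression $w = \min_i(\mu_i + u^{(i)})$ with $\mu_i \in \R$, my plan is to replace each $\mu_i$ by $\lambda_i := \min_j(w_j - u^{(i)}_j)$. Since $w$ and $u^{(i)}$ have integer coordinates, $\lambda_i \in \Z$; by construction $\lambda_i + u^{(i)} \geq w$ componentwise and $\lambda_i \leq \mu_i$, so $w \leq \min_i(\lambda_i + u^{(i)}) \leq \min_i(\mu_i + u^{(i)}) = w$, forcing equality. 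Combined with the previous steps, this identifies the image of the building convex hull of $\Gamma$ with the set of lattice points in $\operatorname{tconv}(\Gamma)$, proving the lemma.
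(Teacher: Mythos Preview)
Your argument is correct and follows the same route as the paper: both proofs rest on the computation that $\pi^a L \cap \pi^b L'$ corresponds under (\ref{bijection}) to a tropical combination of the images of $[L]$ and $[L']$. The paper's proof stops there, asserting that ``the two notions of convex combinations coincide,'' whereas you go on to justify explicitly why the \emph{integer} tropical span of $\Gamma$ picks up every lattice point of the \emph{real} tropical convex hull $\operatorname{tconv}(\Gamma)$. That extra paragraph is a genuine improvement in rigor over the paper's treatment, not a detour.
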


\begin{proof}
Suppose $L$ and $L'$ are lattices that lie in the apartment $A$ and let
$u$ and $u'$ be the vectors in $\Z^d$ that represent them.
The lattice $\pi^s L \cap \pi^t L'$ is also in $A$, and is represented by
$\min\{s + u , t+ u'\}$. Therefore, the two notions of convex combinations coincide.
\end{proof}

We now consider the dual tropical structure on $A$ given by
the max-plus algebra. We encode $\Gamma$ by the corresponding
product of linear forms in the max-plus algebra:
\begin{equation}
\label{eq:prodlinear}
 P_\Gamma(X_1, \ldots, X_d) \; = \;
\sum_{i=1}^n
\max (-u_{i1}  + X_1, -u_{i2}+ X_2, \ldots , -u_{id} + X_d) .
\end{equation}

The tropical hypersurface $\mathcal{T}(P_\Gamma)$ defined by this expression
is an arrangement of $n$
tropical hyperplanes (see~\cite{ad}) in the $(d-1)$-dimensional space $A$. Dual to this
tropical hyperplane arrangement is a \emph{mixed subdivision}~\cite[\S 1.2]{san} of the scaled 
standard simplex
\begin{equation}
\label{eq:SumOfSimplices}
 \,n \Delta_{d-1} \,\,=\,\, \Delta_{d-1} + \Delta_{d-1} + \cdots + \Delta_{d-1}.
 \end{equation}
We denote this mixed subdivision by $\Delta_\Gamma$.
Each cell of $\Delta_\Gamma$ has the form
\begin{equation}
\label{eq:cellDecomp}
 \sigma \,\, = \,\, F_1 + F_2 + \cdots + F_n , 
 \end{equation}
where $F_i$ is a face of the $i$th summand $\Delta_{d-1}$
in the Minkowski sum (\ref{eq:SumOfSimplices}).

The combinatorial relationship between the mixed subdivision, the
tropical hyperplane arrangement and the tropical polytope
$\operatorname{tconv}(\Gamma)$ were introduced
 in~\cite[\S 5]{ds} and further developed in~\cite{ad, djs}.
The cells of the mixed subdivision $\Delta_\Gamma$
are in order-reversing bijection with the cells in the
  tropical hyperplane arrangement
 determined by $\mathcal{T}(P_\Gamma)$.
The tropical polytope $\operatorname{tconv}(\Gamma)$ is the union of the
bounded cells in the arrangement $\mathcal{T}(P_\Gamma)$. These
bounded cells correspond to  the interior cells of $\Delta_\Gamma$.

\begin{ex} \rm
For $d=3$ there are many pictures of the above objects in the literature.
For instance, for $n=3$ consider the configuration $\Gamma$ 
from Example \ref{ex:dreidrei} which is represented by the points
$u^{(1)} = (-2,-1,0)$,
$u^{(2)} = (-4,-2,0)$ and
$u^{(3)} = (-6,-3,0)$ in $A = \R^3/\R(1,1,1)$.
That type of tropical triangle is obtained by moving the black point
in \cite[Figure 5]{djs} towards the southwestern direction.
All $35$ combinatorial types for $n=4$ are depicted in 
\cite[Figure 6]{ds}. 
Note that the mixed subdivision $\Delta_\Gamma$ shown on the left in our Figure \ref{fig:planar}
corresponds to the configuration $\Gamma$ that is labeled  \texttt{T[34]} in the census of \cite{ds}.
Our Figure~\ref{fig:convex} shows configurations $\Gamma$ that consist of
\emph{all} lattice points in a tropical polygon.
Two cells in their mixed subdivisions $\Delta_\Gamma$
intersect if and only if the corresponding points in $\Gamma$
are connected by an edge in the simplicial complex structure on 
$ \operatorname{tconv}(\Gamma)$.
This is the statement about the reduction complex in Theorem \ref{sec:must;thm:convex}. \qed
\end{ex}

Since the planar case ($d=3$) has been amply visualized, we chose
a configuration of three points in the three-dimensional apartment
to serve as our example with picture.

\begin{figure}
\centering \vskip -0.2cm
\includegraphics[width=.36\textwidth]{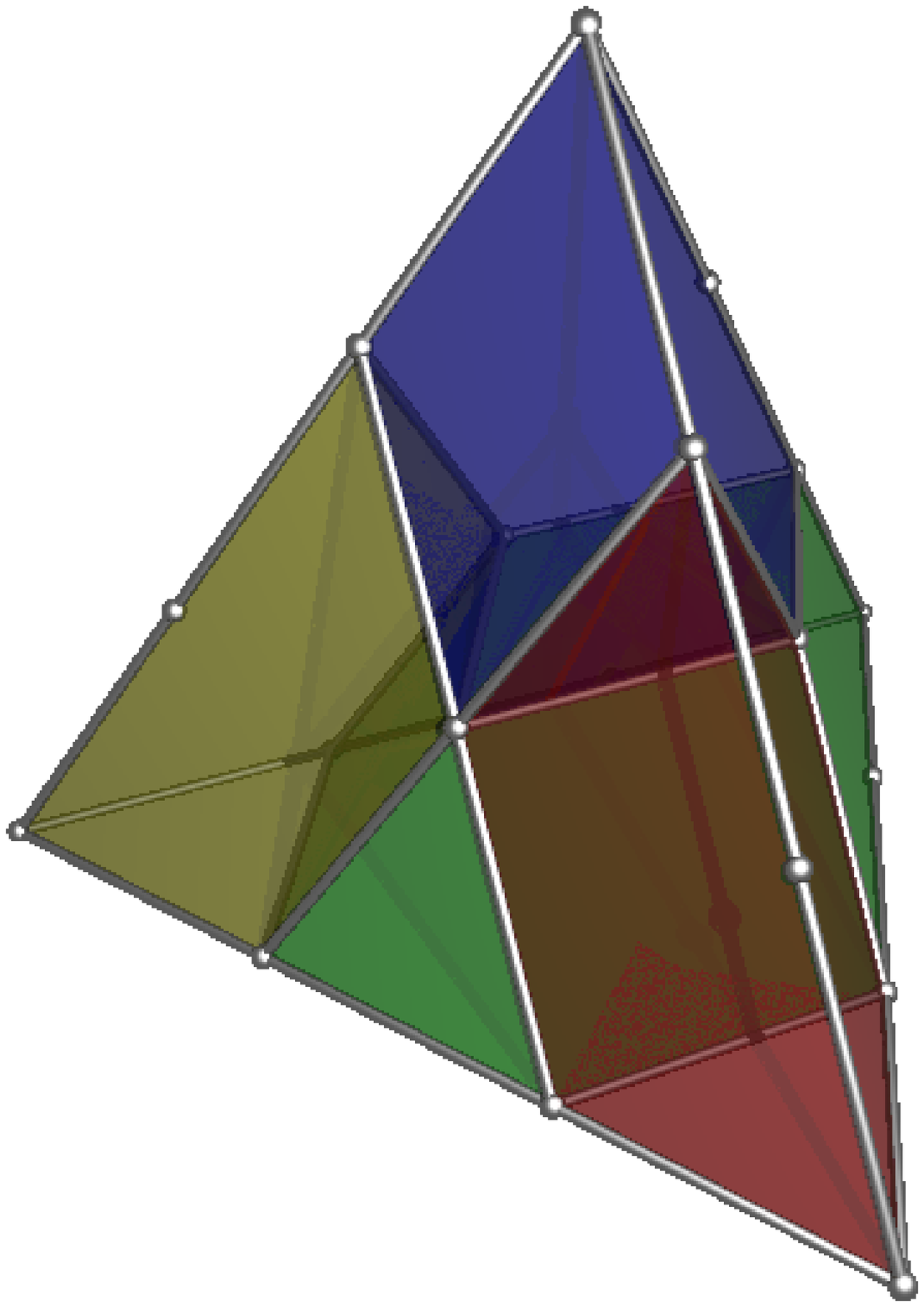} \hspace{0.4in}
\includegraphics[width=.44\textwidth]{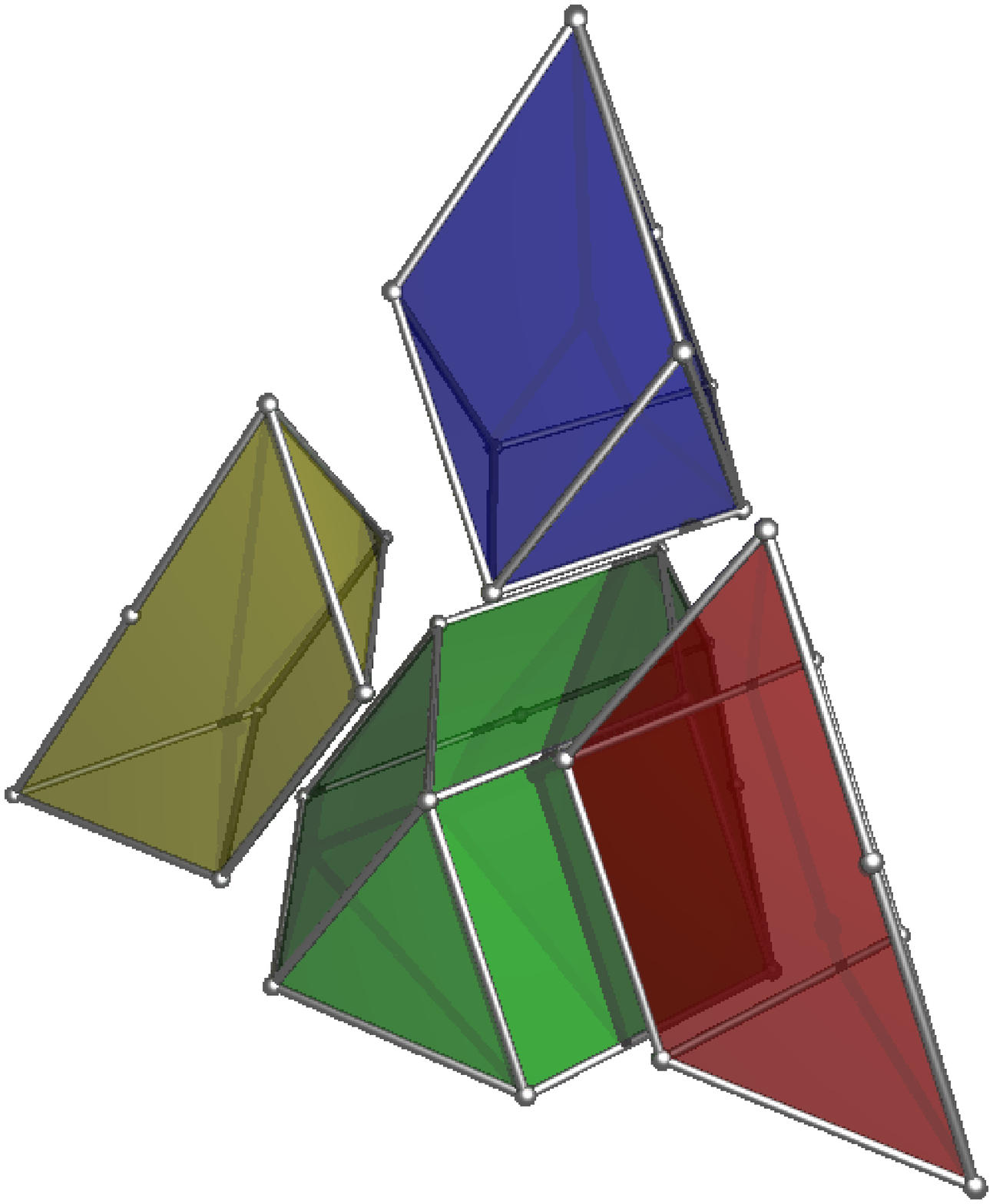} \hspace{0.1in}
\caption{The special fiber of the Mustafin variety $\M(\Gamma)$
for the configuration $\Gamma$  in Example \ref{ex:explodingtetrahedron}
is represented by a mixed subdivision of the tetrahedron into four cells.
The green cell represents a
primary component of $\M(\Gamma)_k$ that is a singular toric $3$-fold.}
\label{fig:exploding}
\end{figure}

\begin{ex} \label{ex:explodingtetrahedron} \rm
Fix $d = 4$, $n = 3$ and  the vertices
$u^{(1)} {=} (0,0,0,0)$, 
$u^{(2)} {=} (0,-1,0,-1)$, 
$u^{(3)} {=} (0,0,-1,-1)$.
This configuration
$\Gamma$ defines an arrangement of three tropical planes in $A = \R^4/\R(1,1,1,1)$ 
with defining equation
\[
P_\Gamma \,=\, 
\max(X_1,X_2,X_3,X_4) + 
\max(X_1,X_2+1, X_3,X_4+1) + \max(X_1,X_2,X_3+1,X_4+1). 
\]
The expansion of this tropical product of linear forms is
 the tropical cubic polynomial
\begin{equation}
\label{exploding1}
 \begin{matrix}
  P_\Gamma \,=\, \max \{\,
 3 X_1+c_{111},
 2 X_1 {+} X_2 + c_{112},  \ldots,
 X_1 {+} X_2 {+} X_3 + c_{123}, \ldots, 3 X_4 + c_{444} \}, \\ \qquad \text{with} \quad
                              c_{111} = 0\,,
                              \,\,\,
c_{112} = c_{113} = c_{114} = c_{122} = c_{133} = c_{222} = c_{333} = 1, 
 \\ \qquad \qquad \quad
  c_{123} {=} c_{124} {=}  c_{134} {=} c_{144} {=} c_{223} {=} c_{224}
{=} c_{233} {=} c_{234} {=} c_{244} {=} c_{334} {=} c_{344} {=} c_{444} = 2.
\end{matrix} \end{equation}
 We associate these $20$ coefficients with
 the lattice points in the  tetrahedron $3 \Delta_3$ of
(\ref{eq:SumOfSimplices}). The corresponding regular mixed
 subdivision $\Delta_\Gamma$  of the tetrahedron induced by lifting
 the points to heights $c_{ijk}$. It has four maximal cells 
(\ref{eq:cellDecomp}) and is shown in Figure \ref{fig:exploding}. \qed
 \end{ex}

These combinatorial objects are important for the study of
Mustafin varieties because of the following theorem, which is the main result in
this section.

\begin{thm} \label{thm:twistedVeronese}
The Mustafin variety $\M(\Gamma)$ is isomorphic to the twisted $n$th Veronese embedding
of the projective space $\PP_R^{d-1}$ determined by the
tropical polynomial $P_\Gamma$. In particular,
the special fiber $\M(\Gamma)_k$ equals the union
of projective toric varieties corresponding to the cells in the
 regular mixed subdivision $\Delta_\Gamma$ of 
the simplex $n \cdot \Delta_{d-1}$.
\end{thm}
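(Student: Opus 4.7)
Since $\Gamma$ lies in a single apartment of $\mathfrak{B}_d$, I may choose a basis $e_1,\ldots,e_d$ of $V$ in which every $g_i$ is diagonal: $g_i = {\rm diag}(\pi^{-u_{i1}},\ldots,\pi^{-u_{id}})$. The overall plan is to rewrite the parameterization of $\M(\Gamma)$ via the Segre embedding so as to expose a twisted Veronese map, and then to invoke the standard tropical-toric dictionary for the degeneration.

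Composing the twisted diagonal $(g_1^{-1},\ldots,g_n^{-1})\circ\Delta$ with the Segre closed immersion $(\PP^{d-1}_R)^n \hookrightarrow \PP^{d^n-1}_R$ yields, in homogeneous coordinates on $\PP^{d-1}$, the map
\[
(y_1:\cdots:y_d) \,\longmapsto\, \bigl(\pi^{u_{1,j_1}+\cdots+u_{n,j_n}}\, y_{j_1}\cdots y_{j_n}\bigr)_{(j_1,\ldots,j_n)\in\{1,\ldots,d\}^n}.
\]
Coordinates for multi-indices $(j_1,\ldots,j_n)$ with the same exponent vector $\alpha$ (with $|\alpha|=n$) are proportional by pure $\pi$-powers, and after clearing a common $\pi^M$ the image is visibly the twisted $n$th Veronese $\PP^{d-1}_R \to \PP^N_R$, $y \mapsto (\pi^{M-c_\alpha}\,y^\alpha)_\alpha$, with $N = \binom{n+d-1}{d-1}-1$ and $c_\alpha$ equal to the coefficient of $X^\alpha$ in the tropical expansion of $P_\Gamma$ from~(\ref{eq:prodlinear}). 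Since the Segre is a closed immersion and $\M(\Gamma)$ is the scheme-theoretic closure of its generic fiber, $\M(\Gamma)$ is canonically isomorphic to the closure of this twisted Veronese image in $\PP^N_R$; normality, flatness over $R$, and reducedness of the special fiber (Theorem~\ref{sec:must;thm:basicfacts}) make the comparison unambiguous.

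For the special-fiber statement I invoke the standard tropical-toric dictionary (going back to Gelfand-Kapranov-Zelevinsky, and developed in Sturmfels' work on Gr\"obner bases and toric ideals): a Veronese embedding twisted by weights $(c_\alpha)$ on the degree-$n$ monomials degenerates, as $\pi \to 0$, to the reduced union of the projective toric varieties $X_\sigma$ attached to the maximal cells $\sigma = F_1 + \cdots + F_n$ of the regular subdivision of $n\Delta_{d-1}$ obtained by lifting each vertex $\alpha$ to height $c_\alpha$. By construction these heights are precisely the coefficients of $P_\Gamma$, so the resulting subdivision is $\Delta_\Gamma$, which yields the promised description of $\M(\Gamma)_k$.

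The main obstacle I expect is the scheme-theoretic identification in the middle paragraph: confirming on the nose that the ideal of $2\times 2$ minors of $(g_1,\ldots,g_n)(X)$, intersected with $R[X]$ and saturated with respect to the irrelevant ideals of $(\PP^{d-1}_R)^n$, matches the binomial ideal cutting out the twisted Veronese in $\PP^N_R$, with no spurious embedded components in the special fiber. Normality, flatness, and the reducedness provided by Theorem~\ref{sec:must;thm:basicfacts} are the essential inputs for ruling out such pathologies.
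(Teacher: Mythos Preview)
Your proposal is correct and follows essentially the same route as the paper: compose with the Segre embedding, use the $\pi$-proportionality among Segre coordinates sharing an exponent vector to identify the image with the twisted Veronese, then invoke the standard toric-degeneration dictionary (the paper cites Alexeev, Hu, and Sturmfels) for the special fiber. The paper makes your ``clearing a common $\pi^M$'' step precise via the explicit coordinate change $y_{i_1\cdots i_n} := z_{j_1\cdots j_n}\pi^{c_{j_1\cdots j_n}}$ with $(j_1,\ldots,j_n)$ the minimizing permutation; there is a minor sign slip in your exponent $M-c_\alpha$, but the argument is otherwise identical.
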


We now give a precise definition of the \emph{twisted Veronese embedding}
referred to above.
Fix the lattice $L = R\{e_1, \ldots, e_d\}$ and corresponding
projective space $\PP(L)$ with coordinates $x_1, \ldots, x_d$.
 The $n$th symmetric power 
$\operatorname{Sym}\nolimits_n(L)$ of $L$ is a free $R$-module of rank $\binom{n+d-1}{n}$.
The corresponding projective space $\PP(\operatorname{Sym}\nolimits_n(L))$ has coordinates
$y_{i_1 i_2 \cdots i_n}$ where $1 \leq i_1 \leq i_2 \leq \cdots \leq i_n \leq d $.
We can embed $\PP(L)$ into $\PP({\rm Sym}_n(L))$
by the $n$th \emph{Veronese embedding} which is given in coordinates by
$\,y_{i_1 i_2 \cdots i_n} = x_{i_1} x_{i_2} \cdots x_{i_n}$.

Consider any homogeneous tropical polynomial of degree $n$,
\begin{equation}
\label{anyoldC}
  C \,\,=\,\,
\max \bigl\{ -c_{i_1 i_2 \cdots i_n} + X_{i_1} + X_{i_2} + \cdots + X_{i_n} \,:
1 \leq i_1 \leq i_2 \leq \cdots \leq i_n \leq d \bigr\}, 
\end{equation}
where the coefficients $c_{i_1 i_2 \cdots i_n}$
are arbitrary integers.
We define the \emph{twisted Veronese embedding}
$\PP(V) \rightarrow_C \PP(\operatorname{Sym}\nolimits_n(L)) $
corresponding to the tropical polynomial $C$ by
\[  y_{i_1 i_2 \cdots i_n} \,\,=\,\, x_{i_1} x_{i_2} \cdots x_{i_n} \cdot
\pi^{c_{i_1 i_2 \cdots i_n}}
\qquad \hbox{for} \quad
1 \leq i_1 \leq i_2 \leq \cdots \leq i_n \leq d. \]
The closure of the image of this morphism in
$\PP(\operatorname{Sym}\nolimits_n(L))$
is an irreducible $R$-scheme. Its generic fiber 
is isomorphic to the $n$th Veronese embedding of $\PP^{d-1}_K$.

\begin{proof}[Proof of Theorem~\ref{thm:twistedVeronese}]
Our twisted Veronese embedding is a special case of the 
general construction of toric degenerations of projective toric
varieties. That construction is well-known to experts, and it
is available in the literature at various levels of generality,
starting with the work on Gr\"obner bases of toric varieties,
and the identification of initial monomial ideals with regular
triangulations, presented in \cite{stu}. A complete treatment for
the case of arbitrary polyhedral subdivisions, but still over
$\C$, appears in \cite[\S 4]{hu}.
The best reference for our setting
of an arbitrary discretely valued field $K$ seems to be  Alexeev's construction of
one-parameter families of stable toric pairs in \cite[\S 2.8]{al}

The special fiber of the twisted Veronese embedding
is a scheme over $k$. According to  \cite[Lemma 2.8.4]{al},  
its irreducible components are the projective toric varieties
corresponding to the polytopes in the regular polyhedral 
subdivision of $n \cdot \Delta_{d-1}$ induced by the heights $c_{i_1 i_2 \cdots i_n}$.
These toric varieties are glued according to the dual cell structure given by the
tropical hypersurface $\mathcal{T}(C)$. In the tropical literature this construction
is known as {\em patchworking}. We note that the special
fiber of the twisted Veronese can be non-reduced, even for
$n=d=2$, as with  $\langle  y_{12}^2 - \pi y_{11} y_{22} \rangle $.

In this proof we do not consider arbitrary tropical polynomials
 but only  products of linear forms. Those encode configurations
   $\Gamma \subset A \cap \mathfrak{B}_d^0$. Their
coefficients $\, c_{i_1 i_2 \cdots i_d} \,$ are so special that they
ensure  the reducedness of the special fiber.
Equating the tropical polynomial $C$ in~\eqref{anyoldC} with the
tropical product of linear forms $P_\Gamma$ 
in (\ref{eq:prodlinear}), we obtain
\[ c_{i_1 i_2 \cdots i_n} \,\, = \,\,
\min \bigl\{
u_{i_1 \sigma_1} + 
u_{i_2 \sigma_2} + \cdots + 
u_{i_n \sigma_n} \,: \, \sigma \in \mathfrak{S}_n \bigr\}\text{.} \]
Here $\mathfrak{S}_n$ denotes the symmetric
group on $\{1,2,\ldots,n\}$.
The regular polyhedral subdivisions of $n \cdot \Delta_{d-1}$
defined by such choices of coefficients are the mixed subdivisions.

Consider the sequence of two embeddings
\[ \PP(V) \hookrightarrow  \PP(L_1) \times_R \cdots 
\times_R \PP(L_n) \hookrightarrow
\PP( L_1 \otimes_R \cdots \otimes_R L_n) \text{.}\]
The map on the left is the one in the definition of
the Mustafin variety $\M(\Gamma)$. The map on 
the right is the classical \emph{Segre embedding}, which is given in coordinates by
\[ z_{i_1 i_2 \cdots i_n}  =  x_{i_1, 1} \, x_{i_2, 2}\, \cdots\, x_{i_n,n} \qquad
\hbox{for $\,\,1 \leq i_1,i_2, \ldots, i_n \leq d$}. \]
The Segre variety is the toric variety in
$\PP( L_1  \otimes_R \cdots \otimes_R  L_n) \,$ cut out by the equations
\[ \quad
z_{i_1 \cdots i_n} \cdot z_{j_1 \cdots j_n} =
z_{k_1 \cdots k_n} \cdot z_{l_1 \cdots l_n} 
\quad \text{ whenever } \{i_\nu,j_\nu\} = \{k_\nu,l_\nu\} 
\,\, \text{ for } \nu = 1,2,\ldots,n.
\]
The image of $\PP(V)$ in the Segre variety is cut out by the linear equations
\[ z_{j_1 j_2 \cdots j_n} \,
\pi^{u_{1,k_1} + \cdots + u_{n, k_n}}  = 
z_{k_1 k_2 \cdots k_n} \,
\pi^{u_{1,j_1} + \cdots + u_{n, j_n}}
\]
whenever the multisets $\{j_1,j_2,\ldots,j_n\}$ and
 $\{k_1,k_2,\ldots,k_n\}$ are equal.
For any ordered sequence of indices $1 \leq i_1 \leq \cdots \leq i_n \leq d$, we
introduce the coordinates
\[
y_{i_1 i_2 \cdots i_n} \; := \;
z_{j_1 j_2 \cdots j_n} \, \pi^{c_{j_1 j_2 \cdots j_n}} \]
where $(j_1, \ldots,j_n)$ is a permutation of
$(i_1,\ldots,i_n)$ such that $\,c_{j_1 \cdots j_n} = u_{1,i_1}  + \cdots + u_{n,i_n}$.
Substituting the $y$-coordinates for the $z$-coordinates in the above
equations, we find that the image of $\PP(V)$ in
$\PP(L_1 \otimes_R \cdots \otimes_R L_n)$ equals the twisted Veronese variety.
\end{proof}

\begin{ex} \label{ex:HexagonWithThreeTriangles} \rm
Let $d=n=3$ and $\Gamma = \{(1,0,0), (0,1,0), (0,0,1)\}$.
The tropical convex hull of $\Gamma$ is $\overline{\Gamma} = \Gamma \cup \{(0,0,0)\}$,
as seen in \cite[Figure 4]{JSY}.
The map $\M(\overline\Gamma) \rightarrow \M(\Gamma)$
is an isomorphism of schemes over $R$ but it is not an isomorphism
of Mustafin varieties. The special fiber $\M(\Gamma)_k$
has four irreducible components. It consists of the
blow-up of $\PP_k^2$ at three points with a copy of $\PP_k^2$ glued 
along each exceptional divisor. The central component is
primary in $\M(\overline\Gamma) $ but it is demoted to being secondary
in $ \M(\Gamma)$. \qed
\end{ex}

The tropical polytope $\operatorname{tconv}(\Gamma)$ 
comes with two natural subdivisions into 
classical convex polytopes. First, there is the
\emph{tropical complex} which is dual to the
mixed subdivision  $\Delta_\Gamma$. Second,
there is the simplicial complex induced from $A$ on
$\operatorname{tconv}(\Gamma)$. This simplicial complex
refines the tropical complex, and it is generally much finer.
The vertices of the tropical complex
on~$\operatorname{tconv}(\Gamma)$ correspond to the
facets of~$\Delta_\Gamma$, and hence to the irreducible
components of the special fiber $\M(\Gamma)_k$.
As before, we distinguish between primary 
and secondary components, so the
tropical complex on $\operatorname{tconv}(\Gamma)$ has
both primary and secondary vertices.
The primary vertices are those contained in~$\Gamma$, and
the secondary vertices are all other vertices
of the tropical complex. 

The points in $\Gamma$ are in {\it general position}
if every maximal minor of the $d \times n$-matrix $(u_{ij})$ is tropically
non-singular.
In this case the  number of vertices in the tropical complex is $\binom{n+d-2}{d-1}$.
That number is ten for $d=3, n=4$, as seen in \cite[Figure~5]{djs},
and the ten vertices correspond to the ten polygons in pictures as the
left one in Figure~\ref{fig:planar}. The points in $\Gamma$ are
in general position if and only if 
the subdivision  $\Delta_\Gamma$  of $n \Delta_{d-1}$ is a \emph{fine} mixed subdivision,
arising from a  \emph{triangulation} of $\Delta_{d-1} \times \Delta_{n-1}$~\cite[Prop.~24]{ds}.

\begin{prop}
For a set $\Gamma$ of $n$ elements in $A \cap \mathfrak{B}_d^0$ the
following are equivalent:
\begin{itemize}
\item[(a)] The configuration  $\Gamma$ is in general position.
\item[(b)] The special fiber $\M(\Gamma)_k$ is of monomial type.
\item[(c)] $\M(\Gamma)_k$ is defined
 by a monomial ideal in $k[X]$ in our chosen coordinates. 
\item[(d)] The number of secondary components of $\M(\Gamma)_k$ equals
$\,\binom{n+d-2}{d-1}-n$.
\end{itemize}
\end{prop}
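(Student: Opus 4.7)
The plan is to establish the cyclic chain (a) $\Rightarrow$ (c) $\Rightarrow$ (b) $\Rightarrow$ (d) $\Rightarrow$ (a), with Theorem~\ref{thm:twistedVeronese} supplying the dictionary between the mixed subdivision $\Delta_\Gamma$ and the special fiber $\M(\Gamma)_k$.

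For (d) $\Rightarrow$ (a), I will invoke Theorem~\ref{thm:twistedVeronese}: its identification of components of $\M(\Gamma)_k$ with maximal cells of $\Delta_\Gamma$, together with the fact that $\Gamma$ always supplies exactly $n$ primary components (one for each cell of the form $\Delta_{d-1}+\{\mathrm{pt}\}+\cdots+\{\mathrm{pt}\}$), makes (d) equivalent to $\Delta_\Gamma$ attaining the maximum cell count $\binom{n+d-2}{d-1}$. By the cited \cite[Prop.~24]{ds}, this happens precisely when $\Delta_\Gamma$ is a fine mixed subdivision, i.e.\ arises from a triangulation of $\Delta_{d-1}\times\Delta_{n-1}$, which is the very definition of general position of $\Gamma$.

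For (a) $\Rightarrow$ (c), I will again use Theorem~\ref{thm:twistedVeronese}: under general position, each fine cell $F_1+\cdots+F_n$ of $\Delta_\Gamma$ satisfies $\sum\dim F_i=d-1$ with each $F_i$ a face of the $i$th copy of $\Delta_{d-1}$, and the corresponding component of $\M(\Gamma)_k$ is the coordinate subvariety $\PP(F_1)\times\cdots\times\PP(F_n)\subset\prod\PP(L_i)_k$ obtained by setting to zero all variables $x_{ij}$ with $j\notin F_i$. Each such component is thus cut out by a monomial prime in the chosen coordinates, and since $\M(\Gamma)_k$ is the reduced union of its components, its ideal is an intersection of monomial primes and hence monomial. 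As a sanity check, one can also reduce the $2\times 2$-minors of the scaled matrix $(g_1,\ldots,g_n)(X)$, with $g_j=\mathrm{diag}(\pi^{-u_{j1}},\ldots,\pi^{-u_{jd}})$, modulo $\pi$: a typical minor has the binomial form
\[ \pi^{-u_{ji}-u_{j'i'}}\,x_{ij}x_{i'j'}\,-\,\pi^{-u_{ji'}-u_{j'i}}\,x_{i'j}x_{ij'}, \]
and general position (via its equivalent formulation as fine mixed subdivision) forces the two $\pi$-exponents to differ, so each reduction modulo $\pi$ is a monomial.

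The remaining implications are essentially free. For (c) $\Rightarrow$ (b), our chosen coordinates on $\PP(L_i)$ come from the specific basis $\{\pi^{-u_{i1}}e_1,\ldots,\pi^{-u_{id}}e_d\}$ of $L_i$, so (c) is precisely the specialization of (b) to these bases. For (b) $\Rightarrow$ (d), I will cite Remark~\ref{rmk:monomial-max-comps}, which asserts that any monomial-type configuration attains the upper bound $\binom{n+d-2}{d-1}$ on the total number of components and so produces exactly $\binom{n+d-2}{d-1}-n$ secondary components. The main obstacle is the identification step in (a) $\Rightarrow$ (c): one must verify that the patchworking description of Theorem~\ref{thm:twistedVeronese} really does produce coordinate subvarieties in the chosen $x_{ij}$-coordinates (not just abstractly toric subvarieties), which requires unwinding the $y$-versus-$z$ coordinate change performed in the proof of Theorem~\ref{thm:twistedVeronese} and tracking how a fine mixed cell $F_1+\cdots+F_n$ cuts out the corresponding coordinate face in each factor $\PP(L_i)_k$.
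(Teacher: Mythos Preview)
Your cycle (a)$\Rightarrow$(c)$\Rightarrow$(b)$\Rightarrow$(d)$\Rightarrow$(a) matches the paper's proof, which likewise handles (c)$\Rightarrow$(b) trivially and (b)$\Rightarrow$(d) via Remark~\ref{rmk:monomial-max-comps}. The paper is terser on the remaining two links: it cites \cite[Prop.~4]{BY} directly for the equivalence of (a) and~(c), and for (a)$\Leftrightarrow$(d) it supplies the one step your argument leaves implicit---that a regular subdivision of $\Delta_{d-1}\times\Delta_{n-1}$ has the maximum number $\binom{n+d-2}{d-1}$ of cells if and only if it is a triangulation---by noting (via \cite{san}) that this polytope is unimodular with normalized volume $\binom{n+d-2}{d-1}$. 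Your citation of \cite[Prop.~24]{ds} gives ``general position $\Leftrightarrow$ fine,'' not ``maximum cell count $\Leftrightarrow$ fine,'' so that volume argument is still needed. One further caution on your sanity check: even if every $2\times 2$ minor of $(g_1,\ldots,g_n)(X)$ reduces to a monomial modulo~$\pi$, the Mustafin ideal is the saturation $I_2\bigl((g_1,\ldots,g_n)(X)\bigr)\cap R[X]$, which can acquire generators beyond the original minors, so monomiality of the special fiber does not follow from the minor-by-minor reduction alone; your primary route through Theorem~\ref{thm:twistedVeronese} (with the coordinate-identification step you correctly flag) is the sound one.
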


\begin{proof}
Clearly, (c) implies (b), and Remark \ref{rmk:monomial-max-comps} states that (b) implies (d).
By~\cite{san}, the mixed subdivision of $n\Delta_{d-1}$ is equivalent to a
subdivision of $\Delta_{d-1} {\times} \Delta_{n-1}$.
That polytope is unimodular and has
normalized volume $\binom{n+d-2}{d-1}$. Hence a 
polyhedral subdivision of $\Delta_{d-1} {\times} \Delta_{n-1}$ is a triangulation
if and only if it has  $\binom{n+d-2}{d-1}$ maximal cells, so
 (a) is equivalent to (d). The equivalence of (a) and (c) is proven
in \cite[Prop.~4]{BY}.
\end{proof}

Block and Yu \cite{BY} showed that the tropical complex on
$\operatorname{tconv}(\Gamma)$ can be computed as a minimal free
resolution in the sense of commutative algebra when
$\Gamma$ is in general position. Namely, they show
that the Alexander dual to the monomial ideal in (b) has
a unique minimal cellular resolution, and the support
of that resolution is the tropical complex on $\operatorname{tconv}(\Gamma)$.
This construction was extended to non-general tropical point configurations
$\Gamma$ by Dochtermann, Joswig and Sanyal \cite{djs}.

Our results in this section apply to arbitrary Mustafin varieties when $n=2$.
Indeed, let $L^{(1)}$ and $L^{(2)}$ be any two lattices in $V$ and consider 
$\Gamma = \{ [L^{(1)}], [L^{(2)}]\}$.
Then there exists an apartment $A$ that contains $\Gamma$, 
and we can represent both $L^{(i)}$ by vectors $u^{(i)} \in \Z^d$
as above. The configuration $\{ u^{(1)}, u^{(2)} \}$ is in general position
if and only if the quantities
$u_{1i} + u_{2j} - u_{2i} - u_{1j}$ are non-zero for all $1 \leq i < j \leq d$. This is equivalent
to the statement that $u^{(1)}$ and $u^{(2)}$ are not contained in any affine hyperplane of the form
(\ref{eq:hyperplanes}) in $A$. 
Assuming that this is the case, the tropical complex on
the line segment $\operatorname{tconv}(\Gamma)$ consists of $d-1$
edges and $d-2$ secondary vertices between them.
Each of the $d-1$ edges is further subdivided into 
segments of unit length in the simplicial complex structure.
If $\Gamma$ is not in general position then
some edges may have length zero.

\begin{prop} \label{prop:classify2}
For $n=2$, isomorphism classes of  Mustafin varieties are in bijection with
lists of $d-1$ non-negative integers, up to reversing the order. The elements of
the list are the lengths of the segments of the one-dimensional tropical complex
 $\operatorname{tconv}(\Gamma)$.
\end{prop}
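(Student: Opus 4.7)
The approach is to combine Theorem~\ref{thm:same-config} with an elementary normalization inside a single apartment. By Theorem~\ref{thm:same-config}, $\M(\Gamma) \cong \M(\Gamma')$ as Mustafin varieties if and only if $\Gamma' = g \cdot \Gamma$ for some $g \in PGL(V)$, so it suffices to classify unordered pairs $\{[L], [M]\} \subset \mathfrak{B}_d^0$ modulo the $PGL(V)$-action. Any such pair lies in a common apartment, and $PGL(V)$ acts transitively on apartments by conjugation of maximal $K$-split tori; after applying a suitable $g \in PGL(V)$ I may therefore assume that both lattices are diagonal with respect to the reference basis $e_1,\ldots,e_d$. Via the bijection~(\ref{bijection}) the pair is encoded by two vectors $u^{(1)}, u^{(2)} \in \Z^d/\Z(1,\ldots,1)$ sitting in the standard apartment $A$.

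To extract a normal form I use the subgroup of $PGL(V)$ preserving $A$ setwise, which contains the normalizer $N(T)$ of the diagonal torus and acts on $A$ as the extended affine Weyl group: integer translations come from $T(K)$ and coordinate permutations in $S_d$ come from permutation matrices. Translations move $u^{(1)}$ to $0$; a permutation then sorts a representative of $u^{(2)}$ as $(v_1,\ldots,v_d)$ with $v_1 \leq \cdots \leq v_d$; after replacing this representative by the one with $v_1 = 0$, the pair is recorded by the tuple of consecutive differences $a_i := v_{i+1}-v_i$, a list of $d-1$ non-negative integers. Swapping $[L]\leftrightarrow [M]$ negates $u^{(2)}$, and re-sorting reverses the tuple, so the list is well defined on unordered pairs up to reversal.

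Surjectivity of the resulting map is immediate: given any $(a_1,\ldots,a_{d-1})$, take $L = \sum_\ell R e_\ell$ and $M = \sum_\ell \pi^{-v_\ell} Re_\ell$ with $v_\ell := a_1 + \cdots + a_{\ell-1}$. Injectivity follows from uniqueness of the normal form: two configurations with the same list are each driven by the above procedure into an identical diagonal pair in $A$, so they lie in one $PGL(V)$-orbit. Finally, I identify the $a_i$ with the segment lengths of the tropical complex on $\operatorname{tconv}(\Gamma)$ by direct computation: the tropical line segment from $0$ to $v$ is parametrized as $\min\{s(1,\ldots,1),\,v\}$ for $s$ increasing from $v_1$ to $v_d$, with bends at $(v_1,\ldots,v_i,v_i,\ldots,v_i)$ for $i = 1,\ldots,d-1$, and the piece between consecutive bends has $\dist$-length $v_{i+1}-v_i = a_i$. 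The main technical subtlety lies in the injectivity step, where one must verify that the apartment stabilizer in $PGL(V)$ surjects onto the extended affine Weyl group of $A$ and that two distinct apartments containing the pair produce the same normalized list; both are standard features of Bruhat-Tits buildings of type $\tilde{A}_{d-1}$, and once they are invoked the proposition is immediate.
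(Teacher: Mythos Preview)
Your proof is correct and follows essentially the same approach as the paper: both invoke Theorem~\ref{thm:same-config} to reduce to classifying unordered pairs of lattice classes modulo $PGL(V)$, then use the fact that any two points lie in a common apartment to extract the tropical segment lengths as a complete invariant. The paper's proof is extremely terse (three sentences), and you have simply made explicit the normalization via the extended affine Weyl group and the identification of the $a_i$ with the segment lengths, which the paper leaves to the reader.
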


\begin{proof}
We apply Theorem~\ref{thm:same-config}. Since any two points lie in a
single apartment, their convex hull consists of a tropical line segment. The
lengths along these line segments are an invariant of the configuration.
\end{proof}

A coarser notion of isomorphism is given by the
{\em combinatorial type} of the mixed subdivision $\Delta_\Gamma$.
Here, two configurations $\Gamma$ and $\Gamma'$ in
$A \cap \mathfrak{B}_d^0$ have the same combinatorial type
if and only if the special fibers $\M(\Gamma)_k$ and $\M(\Gamma')_k$
are isomorphic as $k$-schemes.

For fixed $d$ and $n$, there are finitely many combinatorial types
of configurations~$\Gamma$. As mixed subdivisions of $n \Delta_{d-1}$
are in bijection with the triangulations of $\Delta_{n-1} {\times} \Delta_{d-1}$,
the combinatorial types are classified by the faces of the
{\em secondary polytope} $\Sigma(\Delta_{d-1} {\times} \Delta_{n-1})$.
We illustrate this classification for the case $d=n=3$ of triangles
in the tropical plane.

\begin{example}[$n=d=3$]\label{ex:planar-3-3}
The {\em secondary polytope}
 of the direct product of two triangles,
 $\Sigma(\Delta_2 \times \Delta_2)$, is a four-dimensional
polytope with f-vector $(108, 222, 144, 30)$.
The $108$ vertices correspond to the $108$ triangulations
of $\Delta_2 {\times} \Delta_2$. These come in five combinatorial types,
first determined by A.~Postnikov, and later displayed in \cite[Figure 39, p.~250]{GKZ}.
The special fibers corresponding to these five types are listed in rows
1-5 of \cite[Table 1]{ds}, and they are depicted in the first row of
Figure \ref{fig:18types} below. The second picture shows an orbit of size $12$,
and it represents a type which is incorrectly drawn in 
\cite[Figure 39]{GKZ}: the rightmost vertical edge
should be moved to the left.

The $222$ edges of  $\Sigma(\Delta_2 {\times} \Delta_2)$  come 
in seven types, shown in the second and third row of Figure~\ref{fig:18types}.
The $144$ $2$-faces come in five types, shown in the last two rows
of Figure~\ref{fig:18types}. In each case we report also the number of
``bent lines,'' by which we mean pairs of points in $\Gamma$
whose tropical line segment disagrees with the classical line segment.
Finally, the $30$ facets of $\Sigma(\Delta_2 {\times} \Delta_2)$  
come in three types, corresponding to the coarsest mixed subdivisions of $3 \Delta_2$.
Two of them do not appear for us
because they are degenerate in the sense that
two of the three points in $\Gamma$ are the identical.
The only facet type that corresponds to a valid Mustafin triangle $\Gamma$
is shown on the lower right in Figure~\ref{fig:18types}. \qed
\end{example}

\begin{figure}
\vskip -0.65cm
 \begin{enumerate}
 \item {\em $6$ components, $3$ bent lines, corresponding to vertices of the
 secondary polytope:}
 \vskip .02cm
    \begin{tabular}{rccccccccc}
     $\fbox{108} = \!\!\!\!\! $ & $\!\! 6$ & $\!\! + \!\!$ & $12$ & $\!\!+\!\!$ & $18$ & $\!\!+\!\!$ & $36$ & $\!\!+\!\!$ & $36$ \\   \!\!\!  & \!\!\!\! \includegraphics[scale=.56]{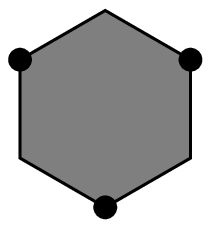} \!\!\!&& \!\!\! \includegraphics[scale=.5]{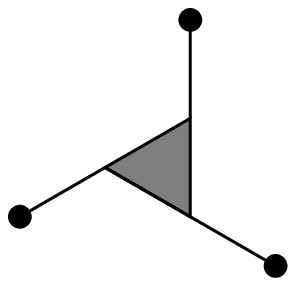} && \!\!\!\! \ \includegraphics[scale=.5]{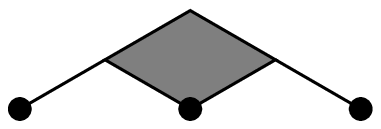} \!\!\! && 
\! \!\!\! \includegraphics[scale=.5]{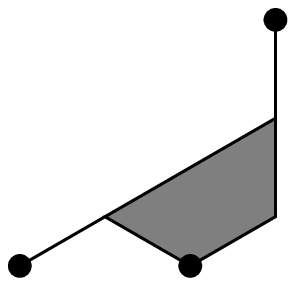}\!\!\!\! && \!\!\!\!\includegraphics[scale=.5]{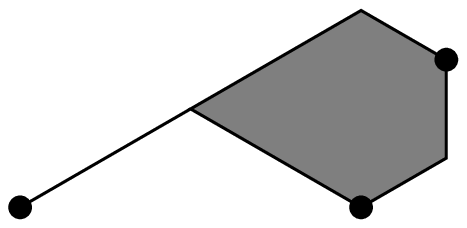} \!\!\! \\ \!\!\!
     & \!\!\!\! \includegraphics[scale=.5]{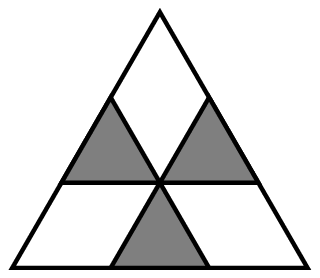} \!\!\! && \!\!\!\includegraphics[scale=.5]{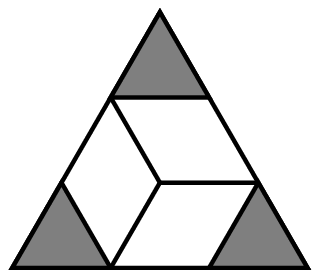} \!\!\! && \!\!\! \includegraphics[scale=.5]{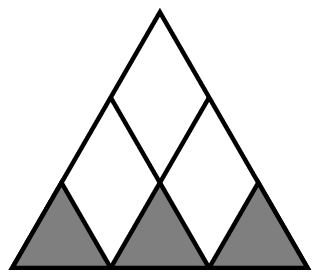} \!\!\! && \!\!\! \includegraphics[scale=.5]{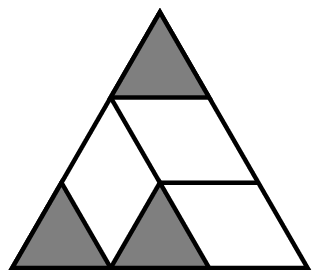}\!\!\! &&\!\!\! \includegraphics[scale=.5]{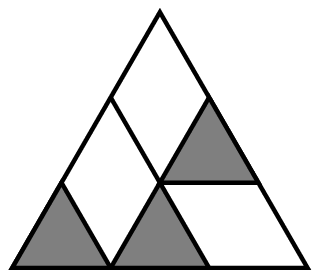} 
   \end{tabular}
 \item {\em $5$ components, $2$ bent lines, corresponding to edges of the secondary polytope:}
 \vskip .02cm
   \begin{tabular}{rccccccccc}
     $\fbox{180} = \!\!\!\!\! $ & $\!36$ & $\!\!+\!\!$ & $36$ & $\!\!+\!\!$ & $36$ & $\!\!+\!\!$ & $36$ & $\!\!+\!\!$ & $36$ \\
\!\!\!     &\!\!\!\! \includegraphics[scale=.5]{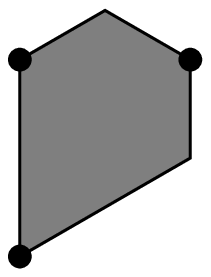} \!\!\! && \!\!\!\includegraphics[scale=.5]{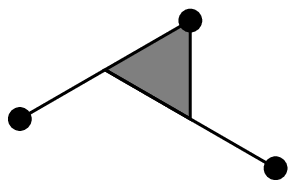} \!\!\! && \!\!\! \includegraphics[scale=.5]{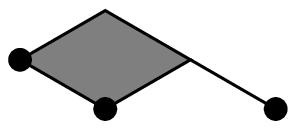} \!\!\! && \!\!\! \includegraphics[scale=.5]{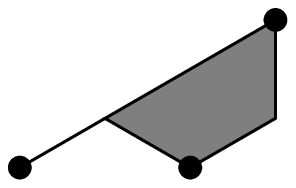}\!\!\! && \!\!\! \includegraphics[scale=.5]{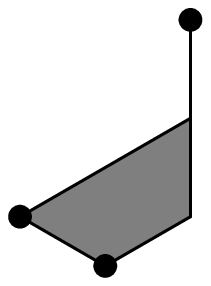} \\
     \!\!\! &\!\!\! \includegraphics[scale=.5]{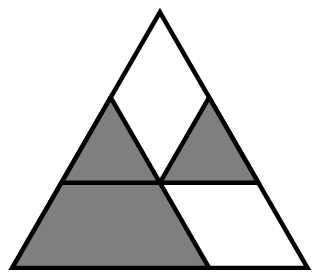} &&\!\!\! \includegraphics[scale=.5]{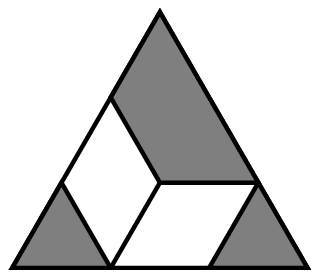} \!\!\! && \!\!\! \includegraphics[scale=.5]{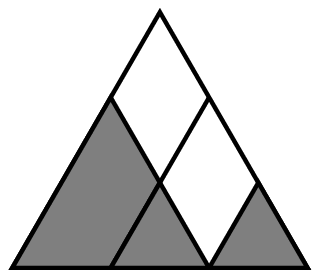}  && \includegraphics[scale=.5]{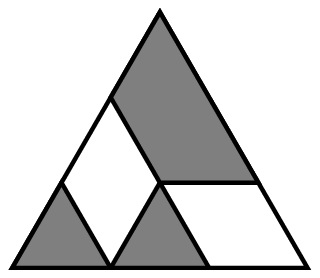} &&     \!\!\! \includegraphics[scale=.5]{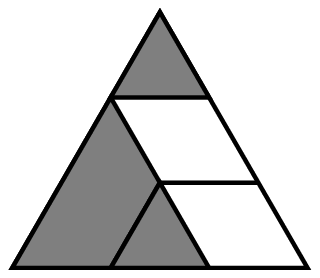} 
   \end{tabular}
 \item {\em $4$ components, $3$ or $2$ bent lines, corresponding to edges of the secondary
 polytope:}
 \vskip .02cm
   \begin{tabular}{rccrc}
     $\fbox{6}$ & (3 bends) & \qquad & $\fbox{36}$ & (2 bends)\\
     & \includegraphics[scale=.5]{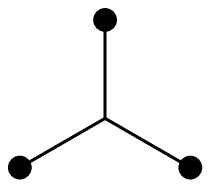} & \qquad && \includegraphics[scale=.45]{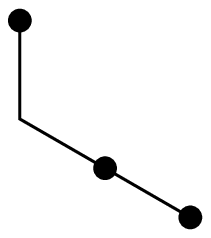} \\
     & \includegraphics[scale=.42]{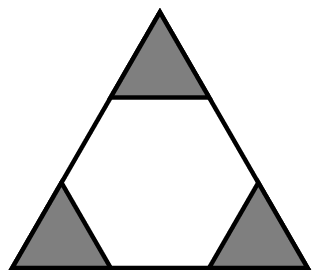} & \qquad && \includegraphics[scale=.42]{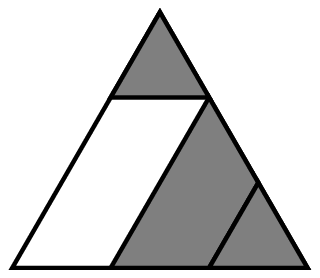} \\
   \end{tabular}
 \item {\em $4$ components, $1$ bent line, corresponding to $2$-faces of
 the secondary polytope:}
 \vskip .02cm
    \begin{tabular}{rccccc}
     $\fbox{90} = $ & $18$ & $+$ & $36$ & $+$ & $36$ \\
     & \includegraphics[scale=.6]{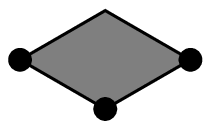} && \includegraphics[scale=.6]{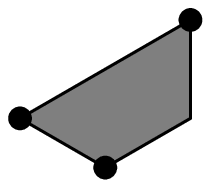} && \includegraphics[scale=.6]{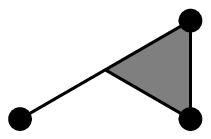}  \\
     & \includegraphics[scale=.43]{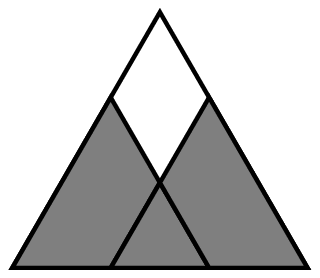} && \includegraphics[scale=.43]{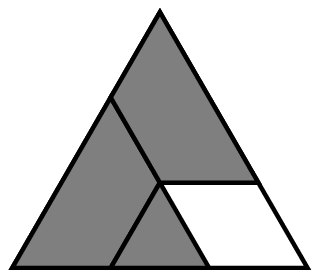} && \includegraphics[scale=.43]{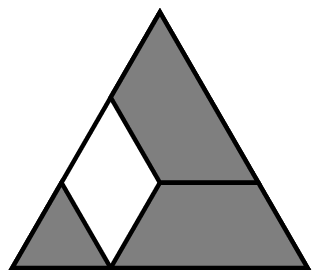} 
   \end{tabular}
 \item  {\em $3$ components, $1$ or $0$ bent lines, corresponding to $2$-faces or facets:}
 \vskip .02cm
   \begin{tabular}{rccrccrc}
     $\fbox{18}$ & (1 bend, 2-face) & \qquad & $\fbox{36}$ & (0 bends, 2-face) & \qquad & $\fbox{12}$ & (0 bends, facet)\\
     & \includegraphics[scale=.67]{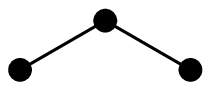} & \qquad && \includegraphics[scale=.67]{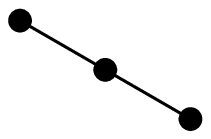} & \qquad && \includegraphics[scale=.67]{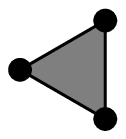} \\
     & \includegraphics[scale=.45]{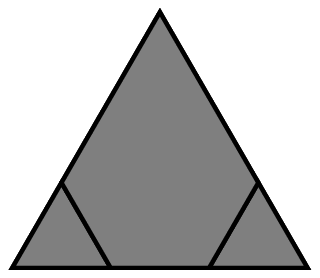} & \qquad && \includegraphics[scale=.45]{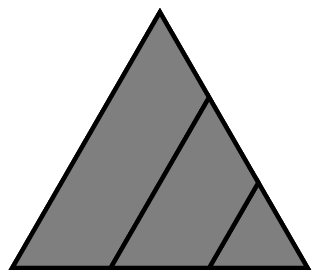} & \qquad && \includegraphics[scale=.45]{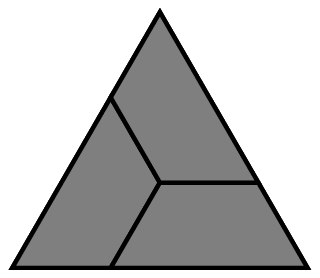} \\
   \end{tabular}
 \end{enumerate}
 \vskip -0.5cm
 \caption{The $18$ combinatorial types of planar Mustafin triangles}
 \label{fig:18types}
\end{figure}

\section{Components of the Special Fiber}

In this section, we describe the components of~$\M(\Gamma)_k$. If $d=2$
then each component is isomorphic to~$\PP_k^1$, by Proposition \ref{prop:d-2-comp}.
If $\Gamma$ is contained in one apartment then each component is
a toric variety, by  Theorem \ref{thm:twistedVeronese}.
 In general, we shall obtain the primary components of $\M(\Gamma)_k$
   from projective space $\PP_k^{d-1}$ by blowing up linear subspaces.

\begin{defn} \label{def:blowup}
Let $W_1, \ldots, W_m$ be linear subspaces in $\PP^{d-1}_k$.
Let $X_0 = \PP^{d-1}_k$ and inductively define $X_i$ to be the
blowup of $X_{i-1}$ at the preimage of $W_i$ under $X_{i-1} \rightarrow \PP^{d-1}_k$.
We say that $X$ is a \emph{blow-up of $\PP^{d-1}_k$ at a collection of linear subspaces} if $X$ is isomorphic to
the variety~$X_m$ obtained by this sequence of blow-ups.
\end{defn}

We now show that the order of the blow-ups does not matter in
Definition \ref{def:blowup}:

\begin{lem}\label{lem:blow-up-universal}
Let $\pi \colon X \rightarrow \PP^{d-1}_k$ be the blow-up at the
linear spaces $\{W_1,\ldots,W_m\}$.
  Then $X$ is isomorphic to the blow-up of 
$\mathscr{I}_1 \mathscr{I}_2 \cdots \mathscr{I}_m$
where $\mathscr{I}_i$ the ideal sheaf of~$W_i$.
\end{lem}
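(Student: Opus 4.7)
The plan is to deduce the result from the general two-ideal identity
\[
\mathrm{Bl}_{\mathscr{I}\mathscr{J}}(Z) \;\cong\; \mathrm{Bl}_{\mathscr{J}\cdot\mathscr{O}_{\mathrm{Bl}_{\mathscr{I}}(Z)}}\bigl(\mathrm{Bl}_{\mathscr{I}}(Z)\bigr),
\]
valid for any coherent ideal sheaves $\mathscr{I},\mathscr{J}$ on a Noetherian integral scheme $Z$. Applying this identity $m-1$ times, starting with $Z=\PP^{d-1}_k$, $\mathscr{I}=\mathscr{I}_1$ and $\mathscr{J}=\mathscr{I}_2\cdots\mathscr{I}_m$, and then recursively on each intermediate blow-up (which remains integral, since the blow-up of an integral scheme along a coherent ideal sheaf is integral), precisely recovers the $m$-fold iterated blow-up of Definition~\ref{def:blowup}.

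The two-ideal identity is established via the universal property of blow-ups in both directions. On $\mathrm{Bl}_{\mathscr{J}\cdot\mathscr{O}_{\mathrm{Bl}_\mathscr{I}(Z)}}(\mathrm{Bl}_\mathscr{I}(Z))$, both $\mathscr{I}$ and $\mathscr{J}$ pull back to invertible ideal sheaves: the first is already invertible on $\mathrm{Bl}_\mathscr{I}(Z)$ and its local non-zero-divisor generator remains a non-zero-divisor under further pullback to an integral scheme (as a nonvanishing section on the dominant image stays nonzero), and the second is invertible by construction. Hence $\mathscr{I}\mathscr{J}$ pulls back to an invertible ideal sheaf, and the universal property of $\mathrm{Bl}_{\mathscr{I}\mathscr{J}}(Z)$ produces a canonical morphism from the iterated blow-up to it.

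The main obstacle is the reverse direction: on the integral scheme $Y=\mathrm{Bl}_{\mathscr{I}\mathscr{J}}(Z)$, one must show that $\mathscr{I}$ and $\mathscr{J}$ each pull back to an invertible ideal sheaf given only that their product does. This is a local algebraic claim. Writing $\mathscr{I}=(a_1,\ldots,a_r)$, $\mathscr{J}=(b_1,\ldots,b_s)$, and $\mathscr{I}\mathscr{J}\cdot\mathscr{O}_Y=(t)$ with $t$ a non-zero-divisor, the relations $a_ib_j=tc_{ij}$ and $t=\sum e_{ij}a_ib_j$ combine to give $t\bigl(1-\sum e_{ij}c_{ij}\bigr)=0$, whence $\sum e_{ij}c_{ij}=1$. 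Thus the $c_{ij}$ generate the unit ideal, and on the affine open where some $c_{i_0j_0}$ is a unit the product $a_{i_0}b_{j_0}=tc_{i_0j_0}$ is a non-zero-divisor, forcing both $a_{i_0}$ and $b_{j_0}$ to be non-zero-divisors. Cancelling $a_{i_0}$ in $a_{i_0}b_j=tc_{i_0j}=a_{i_0}b_{j_0}(c_{i_0j}/c_{i_0j_0})$ yields $b_j\in(b_{j_0})$, so $\mathscr{J}\cdot\mathscr{O}_Y=(b_{j_0})$ is invertible there, and symmetrically for $\mathscr{I}$. Two successive applications of the universal property (first of $\mathrm{Bl}_\mathscr{I}(Z)$, then of $\mathrm{Bl}_{\mathscr{J}\cdot\mathscr{O}_{\mathrm{Bl}_\mathscr{I}(Z)}}(\mathrm{Bl}_\mathscr{I}(Z))$) then supply the inverse morphism, and mutual inversion follows from the uniqueness clause of the universal property.
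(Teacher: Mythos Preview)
Your proof is correct and follows essentially the same strategy as the paper's: both use the universal property of blow-ups in each direction, the crux being that on an integral scheme each factor of an invertible product of ideal sheaves is itself invertible. The paper treats the $m$-fold case directly and simply asserts this factor-invertibility step, whereas you package the argument as a two-ideal identity applied inductively and supply the explicit local computation---a worthwhile addition, since the paper's ``so each factor must be invertible'' is not entirely trivial.
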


\begin{proof}
Let $\pi_i \colon X_i \rightarrow \PP^{d-1}_k$ be the projection in the $i$th
stage of Definition~\ref{def:blowup} and let $\rho \colon Y \rightarrow
\PP^{d-1}_k$ be the blow-up at the product 
ideal sheaf $\mathscr{I}_1 \cdots
\mathscr{I}_m$. By the universal property of blowing up,
$\pi_i^{-1}(\mathscr{I}_i)\cdot \mathscr{O}_{X_i}$ is locally free. Thus, locally,
this sheaf is generated by one non-zero
section of $\mathscr{O}_{X_i}$. Since $X \rightarrow X_i$ is surjective and $X$ is
integral, the pullback of this section is a non-zero section of $\mathscr{O}_X$, so
$\pi^{-1}(\mathscr{I}_i) \cdot \mathscr{O}_{X}$ is also locally free. Therefore,
the product
$\pi^{-1}(\mathscr{I}_1) \cdots \pi^{-1}(\mathscr{I}_m)\cdot \mathscr{O}_X = 
\pi^{-1}(\mathscr{I}_1 \cdots \mathscr{I}_m) \cdot \mathscr{O}_X$
is locally free, and this defines a map from $X$ to $Y$.

For the reverse map, we note that the product $\rho^{-1}(\mathscr{I}_1) \cdots
\rho^{-1}(\mathscr{I}_m)\cdot \mathscr{O}_Y$ is locally free on $Y$, so each
factor $\rho^{-1}(\mathscr{I}_i) \cdot \mathscr O_Y$
must be invertible.  Thus, by the universal property
of blowing up, $Y \rightarrow X_{i-1}$ inductively lifts to $Y \rightarrow X_i$. This
defines a map $Y \rightarrow X$. Thus we have maps between $Y$ and $X$ which are
isomorphisms over the complement of the linear spaces $W_i$, so they
must be isomorphisms.
\end{proof}

\begin{thm}\label{thm:primary-components}
A projective variety $X$ arises as a primary component of the special fiber~$\M(\Gamma)_k$ for
some configuration $\Gamma$ of $n$ lattice points in
the Bruhat-Tits building $\mathfrak{B}_d$ if and only if $X$
is the blow-up of $\PP^{d-1}_k$ at a collection of $n-1$ linear subspaces.
\end{thm}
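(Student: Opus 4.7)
The plan is to reduce to the binary case $n=2$ using Lemma~\ref{lem:projection} and the universal property of blowing up provided by Lemma~\ref{lem:blow-up-universal}. Write $\Gamma = \{[L_1], \ldots, [L_n]\}$, and focus on the primary component $C_1$ of $\M(\Gamma)_k$ indexed by $[L_1]$. For each $j \geq 2$, let $V_j$ denote the primary component of $\M(\{[L_1],[L_j]\})_k$ indexed by $[L_1]$, that is, the unique component mapping birationally to $\PP(L_1)_k$ given by Corollary~\ref{cor:primary}. The strategy for the forward direction is first to identify each $V_j$ with a blow-up $\mathrm{Bl}_{W_j}\PP(L_1)_k$ at a linear subspace $W_j \subset \PP^{d-1}_k$, then to glue these binary pictures together.

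For the binary step, I would invoke the elementary-divisor theorem to pick bases of $L_1$ and $L_j$ in which $L_j = g_j L_1$ with $g_j = \operatorname{diag}(\pi^{m_1}, \ldots, \pi^{m_d})$ and $m_1 \leq \cdots \leq m_d$. Writing out the $2{\times}2$ minors that cut out $\M(\{[L_1],[L_j]\})$ as in Example~\ref{ex:dreidrei}, the component $V_j$ coincides with the closure of the graph of the linear rational map $\PP(L_1)_k \dashrightarrow \PP(L_j)_k$ obtained by rescaling the change-of-basis matrix $g_j^{-1}$ to have $R$-integer entries not all in $\pi R$ and then reducing modulo $\pi$. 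The indeterminacy locus of this rational map is the coordinate subspace $W_j = \{x_i = 0 : m_i = m_d\}$, so $V_j \cong \mathrm{Bl}_{W_j}\PP(L_1)_k$, and $W_j$ is intrinsic to the pair $([L_1],[L_j])$ despite its coordinate description.

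For the assembly step, Lemma~\ref{lem:projection} supplies a birational morphism $C_1 \to V_j$ over $\PP(L_1)_k$ for each $j$. The universal property of blowing up forces the pullback of $\mathscr{I}_{W_j}$ to $C_1$ to be invertible; taking the product over $j$, the pullback of $\mathscr{I}_{W_2}\cdots\mathscr{I}_{W_n}$ is invertible. By Lemma~\ref{lem:blow-up-universal}, this produces a birational morphism $\psi\colon C_1 \to X$, where $X$ is the blow-up of $\PP(L_1)_k$ at the collection $\{W_2, \ldots, W_n\}$. For the reverse morphism, composing the projections $X \to V_j \hookrightarrow \PP(L_1)_k \times \PP(L_j)_k$ over all $j$ yields a morphism $X \to \prod_j \PP(L_j)_k$ whose image is irreducible and generically coincides with the diagonal image of $\PP(V)$; by the uniqueness in Corollary~\ref{cor:primary} this image must be $C_1$, so $\psi$ is an isomorphism and $C_1 \cong X$.

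The converse direction will be a direct construction. Given linear subspaces $W_1, \ldots, W_{n-1} \subset \PP^{d-1}_k = \PP(L_1/\pi L_1)$, I would lift each $W_i$ to a subspace $\tilde W_i \subset L_1/\pi L_1$ and set $L_{i+1} := \pi L_1 + \widehat W_i$, where $\widehat W_i \subset L_1$ is any $R$-submodule reducing to $\tilde W_i$. Each $[L_{i+1}]$ is adjacent to $[L_1]$, and choosing a basis of $L_1$ compatible with $\tilde W_i$ puts $L_{i+1}$ into the diagonal form of the binary step with binary blow-up locus exactly $W_i$. Applying the forward direction then identifies the primary component of $\M(\{[L_1], \ldots, [L_n]\})_k$ over $[L_1]$ with the required iterated blow-up. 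The step I expect to be the main obstacle is the assembly step, specifically verifying that $\psi$ is an isomorphism rather than merely a proper birational morphism; this is handled by combining the strong uniqueness of primary components in Corollary~\ref{cor:primary} with the universal property of Lemma~\ref{lem:blow-up-universal} to produce both $\psi$ and its inverse.
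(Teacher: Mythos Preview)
Your proposal is correct and follows the same overall architecture as the paper's proof: handle $n=2$ first, then assemble via the universal property of blowing up (Lemma~\ref{lem:blow-up-universal}), and finally realize an arbitrary linear arrangement by a configuration of lattices adjacent to a fixed one. The one substantive difference is the binary step. The paper appeals to the tropical description in Theorem~\ref{thm:twistedVeronese} to identify the $[L_1]$-primary component of $\M(\{[L_1],[L_j]\})_k$ as the toric blow-up at a coordinate subspace; your argument via the graph closure of the reduced linear projection is more elementary and self-contained, avoiding the machinery of Section~\ref{sec:conf-one-apartm}.

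Two small points of imprecision. First, in constructing the inverse $X \to C_1$, your appeal to Corollary~\ref{cor:primary} is slightly off target: that corollary concerns components of $\M(\Gamma)_k$, and you have not yet placed the image of $X$ inside $\M(\Gamma)_k$. The paper closes this gap (and your setup already contains the ingredients) by noting that both $C_1$ and the image of $X$ land in the fiber product $\prod_{j\neq 1} V_j$ over $\PP(L_1)_k$, and each is the closure there of the common open set isomorphic to $\PP(L_1)_k$; hence they coincide. Second, ``generically coincides with the diagonal image of $\PP(V)$'' is not quite right, since you are working entirely in the special fiber over $k$; what you mean is that the image agrees with $C_1$ on the locus where all the blow-up maps are isomorphisms.
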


Before proving this theorem, we shed some light on the only-if direction by
describing the linear spaces in terms of the configuration $\Gamma$.
Fix an index $i$ and let  $C$ be the primary component of $\M(\Gamma)_k$ corresponding
to the lattice class $[L_i]$. 
 For any other point $[L_j]$ in $\Gamma$
we choose the unique representative~$L_j$ such that $L_j \supset \pi L_i$
but  $L_j \not\supset L_i$. Then the image of
$L_j \cap L_i$ in the quotient~$L_i/\pi L_i$ is a proper, non-trivial $k$-vector
subspace, and we denote by $W_j$ the corresponding linear subspace in $\PP(L_i)_k$.
The component~$C$ is the blow-up of $\PP(L_i)_k$ at the linear
subspaces $W_j$ for all $j \neq i$.

Since $[L_i]$ and~$[L_j]$ are in a common apartment, there is
a basis $e_1, \ldots, e_d$ of $V$ such that $L_i =  R\{ e_1,\ldots,e_d\}$
and  $L_j = R \{ \pi^{-s_1} e_1, \ldots, \pi^{-s_d} e_d \}$, where
$-1 = s_1 \leq \cdots \leq s_d$,
in order to satisfy the above condition on the representative.
Then $W_j$ is the linear space spanned by 
$\{e_i \,:\, s_i \geq 0\}$. In particular, if $\{W_i, W_j\}$ are in general position then
 $W_j$ is the hyperplane spanned by $e_2, \ldots, e_d$ and the blow-up of $W_j$ is trivial.

\begin{ex} \label{ex:delpezzo} \rm
Various classical varieties arise as primary components of some $\M(\Gamma)_k$. 
For instance,  any del Pezzo surface (other than
$\PP^1 \times \PP^1$) is the blow up of $\PP^2$ at  $\leq 8 $ general points, and thus
arises for an appropriate
configuration $\Gamma \in \mathfrak{B}_3$ with~$|\Gamma| \leq 9$. 
\qed
\end{ex}

\begin{proof}[Proof of Theorem~\ref{thm:primary-components}]
First we suppose that $\Gamma$ has only two elements, and we
choose  coordinates as in the discussion prior to Example \ref{ex:delpezzo}.
By Theorem \ref{thm:twistedVeronese}, we can compute the special
fiber from the arrangement of two tropical hyperplanes. We represent a point in $A = \R^d/\R(1,\ldots,1)$
by the last $d-1$ entries of a vector in $\mathbb R^d$, after rescaling
so that the first entry is~$0$. Thus, our tropical hyperplanes are centered
at~$(0, \ldots, 0)$
and $(s_2 + 1, \ldots, s_d + 1)$. The former point lies in the relative interior of the
cone of the latter tropical hyperplane generated by  $-e_t, \ldots, -e_d$,
where $t$ is the smallest index such that $s_t \geq 0$. This 
containment creates a ray at
$(0,\ldots, 0)$ generated by the vector $e_t + \cdots + e_d$, together with
adjacent cones. The resulting complete fan corresponds to the toric blow-up of $\PP^{d-1}_k$ at
the linear space spanned by $e_t, \ldots, e_d$.
This agrees with the description given 
after the statement of Theorem~\ref{thm:primary-components}.

Now suppose $\Gamma$ has $n > 2$ elements. We fix one element $[L_i]$.
Let $C$ denote the corresponding primary component of $\M(\Gamma)_k$. 
We claim that $C$ is
the blow-up of~$\PP(L_i)_k$ at the
linear subspaces~$W_j$ described after the theorem.
For each $[L_j] \in \Gamma \backslash \{ [L_i]\}$,
we have a projection $\M(\Gamma)\rightarrow \M(\{[L_i], [L_j]\})$ which sends $C$ to 
the $[L_i]$-primary component of $\M(\{[L_i], [L_j]\})_k$, and we denote
this component by $C_{j} \subset \PP(L_i)_k \times \PP(L_j)_k$.
We have shown $C_j$ to be the blow-up of~$\PP(L_i)_k$ at~$W_j$.
By taking the fiber product of these components with the
base $\PP(L_i)_k$ for all $j \neq i$, we get  the closed immersion
\begin{equation*}
C \,\rightarrow\, \prod_{ j \neq i}  C_{j} \, \rightarrow \, \prod_{ j =1}^n \PP(L_j)_k,
\end{equation*}
where the first product is the fiber product over $\PP(L_i)_k$ and the second is
over $\Spec k$.  
  Since each projection $C_{j} \rightarrow \PP(L_i)_k$ is a
birational morphism, the fiber product $\prod_{j \neq i}C_{j}$ contains 
an open subset which is mapped isomorphically to
 $\PP(L_i)_k$. Since $C$ is irreducible and 
birational with $\PP(L_i)_k$, we conclude that $C$ 
is isomorphic to the closure of this open set,
which is necessarily the desired primary component.

Let $B$ be the blow-up of $\PP(L_i)_k$ at the linear
subspaces $W_j$. We wish to show that $C$ is isomorphic to~$B$. Since $B$
maps compatibly to each $C_j$, we have a map to the fiber product $\prod_{j
\neq i} C_j$ with base $\PP(L_i)_k$. Since $B$ is irreducible and birational
with $\PP(L_i)_k$, this map factors through a map
 $B \rightarrow C$. On the other hand, the pullbacks of the ideal sheaves of each $W_j$ to~$C$ are all invertible, and this gives the inverse map from $C$ to $B$.
We conclude that $C$ and $B$ are isomorphic.

Finally, for any arrangement of $n-1$ linear subspaces in $\PP_k^{d-1}$
we can choose a configuration $\Gamma \in \mathfrak{B}_d$ with $n=|\Gamma|$
which  realizes the blow-up at these linear spaces
as a primary component. To do this, we represent each
linear space as a vector subspace $W_j$ of $k^d$, and 
we let $M_j$ denote the preimage in $R^d$ under the
residue map $R^d \rightarrow k^d$.
Then we take our configuration to be the standard lattice
 $R^d$ and the adjacent lattices $M_j + \pi R^d$. 
 The component of $\M(\Gamma)_k$ corresponding to $R^d$ is the desired blow-up.
\end{proof}

It follows from Theorem~\ref{thm:primary-components} that the
primary components are always smooth for $d \leq 3$ or $n=2$.
However, if $d \geq 4$ and $n \geq 3$, then we encounter 
 primary components that are not smooth.
These arise from the fact that the simultaneous
blow-up of projective space at several linear subspaces
may be singular. This was demonstrated in Figure \ref{fig:exploding}.

\begin{example} \label{ex:NonsimplePolytope}
Let $V = K^4$ with basis $\{e_1,e_2,e_3,e_4\}$ and
let $\Gamma = \{[L_1], [L_2], [L_3]\}$ be given by
$L_1 = R \{e_1,e_2,e_3,e_4\}$,
$L_2 = R \{e_1, \pi e_2,e_3, \pi e_4\}$ and
$L_3 = R \{e_1,e_2,\pi e_3,\pi e_4\}$.
The primary component
$C$ corresponding to $[L_1]$ is singular. It is obtained
from $\PP^3_k$ by blowing up the two lines spanned
 by~$\{e_1, e_2\}$ and
$\{e_1, e_3\}$ respectively.
This configuration was studied in
Example \ref{ex:explodingtetrahedron}. Its special fiber is drawn
in  Figure \ref{fig:exploding}, in which the polytope corresponding to~$C$ is
 the green cell. This polytope has a vertex that is adjacent to four edges.
\qed
\end{example}

Example \ref{ex:NonsimplePolytope}
underscores the fact that the convex configurations
considered by Mustafin \cite{mus} and Faltings \cite{fa} are
very special. In the convex case, all primary components are smooth.
This is a consequence of the following more general result.

\begin{prop}\label{thm:nice-blowups}
Fix the lattice $L_n$ and the linear spaces
$W_1$, \ldots, $W_{n-1}$ in $L_n/\pi L_n$ as after Theorem~\ref{thm:primary-components}.
Suppose
that for any pair of linear spaces $W_i$ and~$W_j$ either they intersect
transversely or their intersection $W_i \cap W_j$ equals some other $W_k$.
Moreover, we assume that they are ordered in a way refining inclusion,
so that $W_j \subset W_i$ implies $j < i$.
Then the primary component $C$ corresponding to $[L_n]$ is 
formed by successively blowing up the strict transforms of $W_1, \ldots,
W_{n-1}$ in that order. In particular, $C$ is smooth.
\end{prop}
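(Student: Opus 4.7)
My plan is to induct on $i$, showing at each stage that the variety $X_i$ obtained by successively blowing up the strict transforms of $W_1,\ldots,W_i$ inside $X_0 = \PP(L_n)_k$ satisfies three invariants: (a) $X_i$ is smooth; (b) each residual strict transform $\tilde W_j \subset X_i$ for $j>i$ is smooth; (c) for any distinct $j,k>i$, the pair $(\tilde W_j,\tilde W_k)$ is either disjoint, meets transversely, or has intersection equal to $\tilde W_\ell$ for some $\ell>i$. The proposition's hypotheses give (c) for $i=0$, and (a), (b) for $i=0$ are trivial. Granting the invariant through all $n-1$ stages, the endpoint $X_{n-1}$ is smooth.

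To identify $X_{n-1}$ with the primary component $C$, I would combine Lemma \ref{lem:blow-up-universal} with the description of $C$ given just after Theorem \ref{thm:primary-components}: $C$ is the blow-up of $\PP(L_n)_k$ at the product ideal $\mathscr{I}_1 \cdots \mathscr{I}_{n-1}$, equivalently the iterated blow-up at the \emph{full} transforms of the $W_i$. At each stage the full transform of $W_{i+1}$ differs from the strict transform by an effective Cartier divisor supported on the previously created exceptional divisors, and blowing up an effective Cartier divisor is an isomorphism. Hence the iterated strict-transform blow-up agrees with $C$.

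For the inductive step, $X_{i+1}$ is smooth because the center $\tilde W_{i+1}$ is smooth in the smooth $X_i$. To check smoothness of each $\tilde W_j \subset X_{i+1}$ with $j>i+1$, I would run a case analysis on the position of $\tilde W_j$ relative to the center in $X_i$. Of the three cases allowed by invariant (c), the ``intersection equals $\tilde W_\ell$'' possibility is sharply restricted by the ordering hypothesis: $W_\ell \subset W_{i+1}$ forces $\ell < i+1$, so the only remaining subcase with $\ell > i$ is $\ell = i+1$ itself, meaning $\tilde W_{i+1} \subset \tilde W_j$. Together with the disjoint and transverse cases, each of these produces a smooth strict transform by the standard local model for blowing up a smooth center. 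Invariant (c) for $X_{i+1}$ follows by a parallel case-by-case verification: disjoint pairs remain disjoint; a pair meeting only along $\tilde W_{i+1}$ becomes disjoint after blow-up; a pair meeting in $\tilde W_\ell$ with $\ell>i+1$ continues to meet in $\tilde W_\ell$.

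The main obstacle is the remaining subcase of (c), namely showing that a transverse pair $(\tilde W_j,\tilde W_k)$ in $X_i$ that also meets the center $\tilde W_{i+1}$ remains transverse (or becomes disjoint) in $X_{i+1}$. This reduces to a local coordinate computation at a point of the triple intersection $\tilde W_{i+1}\cap\tilde W_j\cap\tilde W_k$, where one must check that the tangent spaces decompose compatibly with the blow-up. The ordering hypothesis, which ensures that every proper sub-subspace of $W_{i+1}$ appearing in the list has already been blown up, is the structural input that forces the relevant tangent configuration to be ``clean'' and makes the local computation work; carrying this condition through the entire induction is the subtle point.
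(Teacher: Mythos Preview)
Your plan is correct and shares the essential mechanism with the paper: both arguments identify $C$ with the iterated strict-transform blow-up by observing that at each stage the weak (total) transform of $W_i$ differs from its strict transform only by previously created exceptional divisors, which are Cartier and hence irrelevant for the blow-up.  Smoothness then follows because every center is smooth in a smooth ambient.

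The organization, however, differs in a way that bears on your ``main obstacle.''  You carry the invariant (c) on \emph{all} pairs $\tilde W_j,\tilde W_k$ with $j,k>i$ through the induction, and this forces you to confront the delicate question of whether a transverse pair stays transverse after blowing up the center.  The paper sidesteps this entirely.  Its inductive step fixes a single index $i$ and traces only how the weak transform of that one $W_i$ evolves under the successive blow-ups at $\tilde W_{j,j-1}$ for $j<i$; the three-case split (containment, transversality, or intersection equal to an earlier $W_k$) is made in terms of the \emph{original} linear subspaces $W_j$ and $W_i$ in $\PP^{d-1}_k$, not their strict transforms.  Because linear subspaces have globally uniform local models, each case can be settled once and for all without tracking how pairwise intersection patterns evolve.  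In particular, the transversality you worry about never enters: the paper only needs that when $W_j$ and $W_i$ are transverse the pullback ideal of $W_i$ already cuts out its strict transform, and when $W_i\cap W_j=W_k$ with $k<j$ the strict transforms have been separated at stage~$k$.

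The trade-off is that your invariant (b) makes the smoothness of the intermediate centers explicit, whereas the paper simply asserts at the end that each $\tilde W_{i,i-1}$ is smooth; conversely, the paper's argument is shorter precisely because it does not maintain the full pairwise structure (c).
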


\begin{proof}
We know that $C$ is formed by the successive blow-ups of the weak transforms of
the~$W_i$, so
we just need to show that these are equivalent to the blow-ups of the strict
transforms. Let $B_{i-1}$ be the blow-up of the strict transforms of
$W_1,\ldots, W_{i-1}$, and by induction, we assume this to be equal to the
blow-up of the weak transforms.
We will use $W_{i,j}$ and $\tilde W_{i,j}$ to denote the weak transform and
strict transform, respectively, of~$W_i$ in~$B_j$.

We claim that the weak transform $W_{i,i-1}$ is the union of
the strict transform $\tilde W_{i,i-1}$ with some exceptional divisors, which we
prove by tracing it through previous blow-ups.  For the $j$th step, in which we blow up  $\tilde
W_{j,j-1}$, we have three cases. First, if the original linear space $W_j$ is
contained in~$W_i$, then $W_{i,j}$ consists of the
strict transform of $W_{i,j-1}$ together with the exceptional divisor of the
blow-up. Second, if $W_j$ intersects $W_i$ transversely, then the weak transform of $W_{i,j-1}$
is equal to the strict transform. Third, if neither of the two previous cases hold,
then, by assumption, we must have already blown up $W_i \cap W_j$. In this case,
$\tilde W_{j,j-1}$ and $\tilde W_{i,j-1}$ are disjoint, so $\tilde W_{j,j-1}$
only intersects $W_{i,j-1}$ along the exceptional divisors of previous blow-ups, and these
intersections are transverse, so again, the strict transform of $W_{i,j-1}$ and
the weak transform coincide.

Therefore, $W_{i,i-1}$ consists of the strict transform~$\tilde W_{i,i-1}$
together with the exceptional divisors of the blow-ups of those $W_j$ which are
contained in~$W_i$.  Since these exceptional
divisors are defined by locally principal ideals, we can remove them without
changing the blow-up, so $B_i$ is isomorphic to the blow-up of~$\tilde
W_{i,i-1}$.  Since each strict transform $\tilde W_{i,i-1}$ is smooth, its
blow-up is also smooth, so $C$ is smooth.
\end{proof}

\begin{ex} \rm
The compactification $\overline{\mathcal{M}}_{0,m}$ of the moduli space of
$m$ points in~$\PP_k^1$
arises from  $\PP_k^{m-3}$ by blowing up $m-1$ general points followed by
blowing up the strict transforms of all linear spaces spanned by these points, in
order of increasing dimension~\cite[Theorem~4.3.3]{Ka}. Using
Proposition~\ref{thm:nice-blowups}, there exist configurations $\Gamma_m \in
\mathfrak{B}_d$ such that $\overline{\mathcal{M}}_{0,m}$ is a component of $\M(\Gamma_m)_k$. \qed
\end{ex}

The isomorphism types of the secondary components
of the special fibers are less restricted
than that of the primary components, but they
are still rational varieties.

\begin{lem}\label{lem:secondary-component}
Let $C$ be a secondary component in~$\M(\Gamma)_k$. There exists a vertex~$v$
in~$\mathfrak B_d^0$ such that if $\,\Gamma' = \Gamma \cup \{v\}$, then
$\M(\Gamma')\rightarrow \M(\Gamma)$ restricts to a birational morphism $\tilde C
\rightarrow C$, where $\tilde C$ is the primary component 
of $\M(\Gamma')_k$ corresponding to~$v$.
\end{lem}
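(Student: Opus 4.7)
The plan is to reduce to the convex case and invoke Theorem~\ref{sec:must;thm:convex}, which says that for a convex configuration~$\Gamma'$ every irreducible component of $\M(\Gamma')_k$ is primary and is indexed by an element of $\Gamma'$. First I would let $\Gamma' \supseteq \Gamma$ be the convex hull of $\Gamma$ in $\mathfrak{B}_d^0$; this is a finite set by the discussion preceding Theorem~\ref{sec:must;thm:basicfacts} (citing \cite{fa, JSY}). If $\Gamma = \Gamma'$ then Theorem~\ref{sec:must;thm:convex} rules out secondary components and the lemma is vacuous, so assume $\Gamma \subsetneq \Gamma'$. Applying Lemma~\ref{lem:projection} to $\Gamma \subseteq \Gamma'$ produces a unique component $\tilde C' \subseteq \M(\Gamma')_k$ that maps birationally onto~$C$, and by Theorem~\ref{sec:must;thm:convex} this $\tilde C'$ is primary for some vertex $v = [L] \in \Gamma'$.

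The next step is to verify that $v \notin \Gamma$. For every $[L_j] \in \Gamma$ the projection $\M(\Gamma') \to \PP(L_j)$ factors through~$\M(\Gamma)$, so the restriction to $\tilde C'$ factors as $\tilde C' \to C \to \PP(L_j)_k$. Since $C$ is secondary the second arrow is not birational, while the first arrow is birational, so the composition cannot be birational either; hence $\tilde C'$ is not primary for any element of~$\Gamma$, forcing $v \in \Gamma' \setminus \Gamma$. Setting $\Gamma'' = \Gamma \cup \{v\}$ and letting $\tilde C \subseteq \M(\Gamma'')_k$ be the unique birational lift of $C$ (Lemma~\ref{lem:projection}), the factorization $\M(\Gamma') \to \M(\Gamma'') \to \M(\Gamma)$ together with the uniqueness clause of Lemma~\ref{lem:projection} forces $\tilde C'$ to be the unique lift of $\tilde C$ in $\M(\Gamma')_k$, so $\tilde C' \to \tilde C$ is birational.

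Finally, because $v \in \Gamma''$ the projection $\M(\Gamma') \to \PP(L)$ factors through $\M(\Gamma'')$, yielding $\tilde C' \to \tilde C \to \PP(L)_k$. The composition is birational since $\tilde C'$ is primary for $v$ in $\M(\Gamma')_k$, and because the first factor is birational the second factor $\tilde C \to \PP(L)_k$ is birational as well; Corollary~\ref{cor:primary} then identifies $\tilde C$ as the primary component of $\M(\Gamma \cup \{v\})_k$ corresponding to~$v$. The only substantive inputs are the finiteness of the convex hull and Theorem~\ref{sec:must;thm:convex}; once those are granted, the remainder is a short diagram chase exploiting the uniqueness in Lemma~\ref{lem:projection} and the obvious factorizations of the projections $\M(\Gamma') \to \M(\Gamma'') \to \M(\Gamma)$ and $\M(\Gamma') \to \M(\Gamma'') \to \PP(L)$.
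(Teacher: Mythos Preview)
Your proof is correct and follows essentially the same route as the paper's: pass to the convex hull, use Theorem~\ref{sec:must;thm:convex} to identify the birational lift of $C$ as a primary component indexed by some vertex~$v$, and then descend to $\Gamma \cup \{v\}$ via the factorization of the projection maps. The paper's argument is terser and omits the explicit check that $v \notin \Gamma$ as well as the careful diagram chase showing $\tilde C$ is primary for~$v$; your added detail on these points is sound and makes the argument more transparent.
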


\begin{proof}
Let $\overline\Gamma$ be the set of all
vertices in the convex closure of~$\Gamma$. By Lemma~\ref{lem:projection},
there is some component
of~$\M(\overline\Gamma)_k$ mapping birationally onto~$C$. By
Theorem~\ref{sec:must;thm:convex}, the component of~$\M(\overline\Gamma)_k$ must
be primary, and so corresponds to some vertex~$v$. Let $\Gamma' = \Gamma \cup
\{v\}$ and let $\tilde C$ be the
primary component corresponding to~$v$. Since $\M(\overline\Gamma) \rightarrow
\M(\Gamma)$ factors through $\M(\Gamma')$, $\tilde C$ must map
birationally onto~$C$.
\end{proof}

\begin{cor}\label{cor:secondary-rational}
Every secondary component is a rational variety.
\end{cor}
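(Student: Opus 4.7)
The plan is to chain together two facts already in hand: Lemma \ref{lem:secondary-component}, which exhibits a primary component dominating the given secondary component birationally, and Theorem \ref{thm:primary-components}, which identifies every primary component as an iterated blow-up of projective space along linear subspaces.

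First I would invoke Lemma \ref{lem:secondary-component} to obtain, for the given secondary component $C$ of $\M(\Gamma)_k$, a vertex $v \in \mathfrak{B}_d^0$ and a primary component $\tilde C$ of $\M(\Gamma \cup \{v\})_k$ together with a birational morphism $\tilde C \rightarrow C$. Rationality is a birational invariant, so it suffices to show that $\tilde C$ is rational. Second, I would apply Theorem \ref{thm:primary-components} to $\tilde C$ to realize it as a blow-up of $\PP^{d-1}_k$ at a finite collection of linear subspaces. Since blowing up any subscheme is a birational operation, $\tilde C$ is birational to $\PP^{d-1}_k$ and therefore rational. Combining the two birational maps gives that $C$ is birational to $\PP^{d-1}_k$, hence rational.

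There is essentially no obstacle here: both lemmas do all the work, and the only content is the observation that rationality passes through birational maps and through blow-ups. The proof is therefore a two-line deduction; the main care is merely to cite the correct statements and make explicit that ``blow-up of $\PP^{d-1}_k$ at linear subspaces'' is obviously a rational variety.

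\begin{proof}
Let $C$ be a secondary component of $\M(\Gamma)_k$. By Lemma~\ref{lem:secondary-component}, there exists a vertex $v \in \mathfrak{B}_d^0$ and a primary component $\tilde C$ of $\M(\Gamma \cup \{v\})_k$, corresponding to $v$, such that the natural projection $\M(\Gamma \cup \{v\}) \rightarrow \M(\Gamma)$ restricts to a birational morphism $\tilde C \rightarrow C$. By Theorem~\ref{thm:primary-components}, $\tilde C$ is the blow-up of $\PP^{d-1}_k$ at a finite collection of linear subspaces, and so is birational to $\PP^{d-1}_k$. Hence $\tilde C$ is a rational variety, and since $C$ is birational to $\tilde C$, the component $C$ is rational as well.
\end{proof}
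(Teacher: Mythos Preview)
Your proof is correct and is exactly the argument the paper gives: the paper's proof simply reads ``This follows from Lemma~\ref{lem:secondary-component} and Theorem~\ref{thm:primary-components}.'' You have merely unpacked this citation into explicit steps.
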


\begin{proof}
This follows from Lemma~\ref{lem:secondary-component} and
Theorem~\ref{thm:primary-components}.
\end{proof}

Now we wish to describe the geometry of the secondary components in more detail.
If $C$ is a secondary component of~$\M(\Gamma)_k$, we let
$C$, $\tilde C$ and~$\Gamma'$ be as in
Lemma~\ref{lem:secondary-component}, and we further let $\pi$ denote the projection
$\M(\Gamma') \rightarrow \M(\Gamma)$.
We identify $\tilde C$ with the blow-up of~$\PP_{k}^{d-1}$ at
the linear spaces~$W_i$ as in Theorem~\ref{thm:primary-components}.

\begin{lem}\label{lem:contraction}
If $\tilde L \subset \tilde C$ is the strict transform of a line $L$ in $\PP^{d-1}_k$,
then $\pi |_{\tilde L}$ is either constant or a closed immersion. Moreover, $\pi
|_{\tilde L}$ is constant if and only if $L$ intersects all  subspaces
$W_i$. 
The restriction of $\pi$ to an exceptional divisior is a closed immersion.
\end{lem}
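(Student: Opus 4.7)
The plan is to use the identification of $\tilde C$ from Theorem~\ref{thm:primary-components} as the simultaneous blow-up of $\PP(L_v)_k\cong\PP^{d-1}_k$ along the linear subspaces $W_i$ ($i\in\Gamma$), together with its Mustafin embedding as a closed subscheme of $\PP(L_v)_k\times\prod_{i\in\Gamma}\PP(L_i)_k$. Under this embedding, $\pi\colon\tilde C\to C$ is simply the restriction of the projection that drops the $\PP(L_v)_k$-factor. For each $i\in\Gamma$, the composition $\tilde C\to\PP(L_i)_k$ factors through the single blow-up $C_i=\mathrm{Bl}_{W_i}\PP(L_v)_k$, and the induced map $C_i\to\PP(L_i)_k$ is the resolved linear projection away from $W_i$: choosing a basis of $L_v$ so that $W_i=V(x_{r_i+1},\ldots,x_d)$, this map is $(x_1{:}\cdots{:}x_d)\mapsto(0{:}\cdots{:}0{:}x_{r_i+1}{:}\cdots{:}x_d)$.

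For the first two assertions, I take a line $L\subset\PP(L_v)_k$ with strict transform $\tilde L\subset\tilde C$ and analyze $\tilde L\to\PP(L_j)_k$ one factor at a time. If $L\cap W_j=\emptyset$, the projection is regular and linear on $L$, so it restricts to a closed immersion of $\tilde L$ onto a line in $\PP(L_j)_k$. If $L$ meets $W_j$ at a single point $p$, write $L=\overline{pq}$ with $q\notin W_j$; any point $ap+bq$ with $b\neq 0$ has image $[b\bar q]=[\bar q]$ in $\PP(L_v/W_j)$, since $\bar p=0$. Hence $L\setminus\{p\}$ is sent to the single point $[\bar q]$, and this extends to a constant map on $\tilde L$. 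Since $\pi|_{\tilde L}$ is the product of these factor maps, it is constant if and only if every factor is constant, i.e., if and only if $L$ meets every $W_j$. Otherwise, at least one factor is a closed immersion onto a line, which forces $\pi|_{\tilde L}$ itself to be a closed immersion.

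For the third assertion, I consider an exceptional divisor $E\subset\tilde C$ lying over some $W_i$. A direct computation in blow-up coordinates shows that $E$ maps isomorphically onto the incidence variety $\{(p,[\bar v])\,:\,p\in W_i,\,[\bar v]\in\PP(L_v/W_i)\}\cong W_i\times\PP(L_v/W_i)$ inside $\PP(L_v)_k\times\PP(L_i)_k$. The two projections onto $\PP(L_v)_k$ and $\PP(L_i)_k$ thus already embed $E$ closedly; the task is to recover the $W_i$-coordinate (which $\pi$ drops together with the $\PP(L_v)_k$-factor) from the remaining factors $\PP(L_j)_k$, $j\neq i,v$. I would do this by sweeping $E$ with the strict transforms of lines through $W_i$ to which the analysis of the first step applies, and by invoking that the Mustafin embedding $\tilde C\hookrightarrow\prod_{j\in\Gamma'}\PP(L_j)_k$ is itself a closed immersion, so the equations cutting out $\tilde C$ encode the missing positional information in the remaining factors.

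The main obstacle is precisely this third step: injectivity of $\pi|_E$ on points requires that any two distinct $p\neq p'\in W_i$ be separated by some projection away from a $W_j$ with $j\neq i,v$, and this fails exactly when the line $\overline{pp'}$ meets that $W_j$. Ruling out such coincidences simultaneously for every $j$, and then promoting injectivity on points to a genuine closed immersion (including on tangent vectors), calls for a careful analysis of how the $W_j$'s sit relative to $W_i$, combined with the birationality of $\pi\colon\tilde C\to C$ which prevents any divisor of $\tilde C$ from being contracted to lower dimension without obstruction.
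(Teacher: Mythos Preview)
Your treatment of the first two assertions is correct and essentially identical to the paper's: analyze the map $\tilde L\to\PP(L_j)_k$ one factor at a time, observe that it is constant precisely when $L$ meets $W_j$ and a closed immersion otherwise, and take the product.

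For the third assertion, however, you have made the problem much harder than the paper does. You try to recover the $W_i$-coordinate of a point on $E$ from the remaining factors $\PP(L_j)_k$ with $j\neq i,v$, and you correctly note that this runs into trouble whenever a line in $W_i$ meets all the other $W_j$. The paper sidesteps this entirely with a one-line argument: on any exceptional divisor, the projection to the factor $\PP(L_v)_k$ is constant, and since the full embedding $\tilde C\hookrightarrow\PP(L_v)_k\times\prod_{j\in\Gamma}\PP(L_j)_k$ is already a closed immersion, dropping a factor on which the map is constant preserves the closed-immersion property. In other words, rather than trying to recover the lost information from the surviving factors, the paper argues that no information is lost in the first place.

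That said, the paper's claim that the $v$-projection is constant on an exceptional divisor deserves scrutiny: the blow-down $\tilde C\to\PP(L_v)_k$ sends the exceptional divisor over $W_i$ onto $W_i$, which is only a single point when $\dim W_i=0$. So the paper's argument, read literally, covers the case where each $W_i$ is a point; for positive-dimensional $W_i$ one has to check more carefully what ``exceptional divisor'' means in the iterated construction of Definition~\ref{def:blowup} and whether the specific choice of $v$ coming from Lemma~\ref{lem:secondary-component} constrains the situation. This is worth thinking through, but it is a far smaller gap than the program you outlined, and the mechanism --- drop a constant factor from a closed immersion --- is the idea you were missing.
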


\begin{proof}
Let $L$ be a line in $\PP^{d-1}_k$ and $\tilde L$ its strict transform.
Consider any vertex $w_i\in \Gamma$ with $W_i$ the corresponding linear space in
$\PP^{d-1}_k$. Then the projection of $\tilde L$ onto the $i$th factor 
$\PP^{d-1}_k$ is constant if $L$ intersects $W_i$ and is a closed immersion if not.
Since the projection of $\tilde L$ to $\M(\Gamma)_k$ consists of the projection to
the fiber product of these factors, we have the desired result.
For any exceptional divisor, the projection to the factor of~$v$ is constant, so
the projection to $\M(\Gamma)_k$ must be a closed immersion.
\end{proof}

\begin{prop}\label{prop:exceptional}
With the set-up as in Lemma~\ref{lem:contraction}, the exceptional locus of
$\tilde C \rightarrow C$ is the union of the strict transforms of all lines
which intersect all of the subspaces~$W_i$.
\end{prop}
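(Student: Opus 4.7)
The plan is to prove both inclusions. The inclusion $\supseteq$ is immediate: if $L \subset \PP^{d-1}_k$ is a line meeting every $W_i$, then Lemma~\ref{lem:contraction} gives that each projection $\pi_j|_{\tilde L}$ is constant, so $\pi|_{\tilde L}$ is constant and $\tilde L$ lies in the exceptional locus $E$ of $\tilde C \to C$.

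For the reverse inclusion, let $\sigma \colon \tilde C \to \PP(L_v)_k$ be the blow-down described in Theorem~\ref{thm:primary-components}, and denote by $\mathrm{Ex}(\sigma) = \bigcup_j E_j$ the union of its exceptional divisors. A key reduction is that no irreducible component of $E$ is contained in $\mathrm{Ex}(\sigma)$: Lemma~\ref{lem:contraction} ensures that $\pi|_{E_j}$ is a closed immersion, so $\pi$ is injective on any subset of a single $E_j$ and no such subset can lie in $E$. Since $\bigcup \tilde L$, taken over all lines $L$ meeting every $W_j$, is closed in $\tilde C$, it therefore suffices to show that every $p \in E \setminus \mathrm{Ex}(\sigma)$ lies on some such $\tilde L$.

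Fix such a $p$ and set $\bar p = \sigma(p) \in \PP(L_v)_k \setminus \bigcup_j W_j$. Because $p \in E$, there is an irreducible curve $F \subset \pi^{-1}(\pi(p))$ passing through $p$. The image $F' = \sigma(F)$ cannot be a point: if it were, $F$ would lie in a fiber of $\sigma$ and hence in some $E_j$, contradicting injectivity of $\pi|_{E_j}$. So $F'$ is an irreducible curve in $\PP(L_v)_k$ containing $\bar p$. For each $j$, the map $\pi_j \colon \tilde C \to \PP(L_j)_k$ factors through the intermediate blow-up $C_j$ from the proof of Theorem~\ref{thm:primary-components}, whose map to $\PP(L_j)_k$ resolves the linear projection $\PP(L_v)_k \dashrightarrow \PP(L_j)_k$ from the center $W_j$. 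Since $\pi_j$ is constant on $F$ while $\sigma$ is an isomorphism off $\mathrm{Ex}(\sigma)$, this linear projection is constant on $F' \setminus W_j$; taking closure, $F'$ lies in the fiber $V_j := \mathrm{span}(W_j, \bar p)$, a linear subspace of $\PP(L_v)_k$ of dimension $\dim W_j + 1$ containing $W_j$.

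Now pick any $\bar q \in F' \setminus \{\bar p\}$ and let $L \subset \PP(L_v)_k$ denote the line through $\bar p$ and $\bar q$. Since $\bar p, \bar q \in V_j$ and $V_j$ is linear, $L \subset V_j$; and because $W_j$ has codimension one in $V_j$, the projective intersection-dimension estimate forces $L \cap W_j \neq \emptyset$. Thus $L$ meets every $W_j$. Finally, since $\bar p \notin \bigcup W_j$, $\sigma$ is an isomorphism at $\bar p$, so $p$ is the unique preimage of $\bar p$ and lies on the strict transform $\tilde L$. The main obstacle sidestepped by this approach is the case $p \in \mathrm{Ex}(\sigma)$, where $p$ encodes tangential information that would have to match the direction of $L$ at some point of $W_j$; the density of $E \setminus \mathrm{Ex}(\sigma)$ in $E$ avoids this case entirely.
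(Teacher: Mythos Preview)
Your overall strategy is sound and the conclusion follows, but there is a genuine gap in the justification of the key reduction step, and your route is considerably more elaborate than the paper's.

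\textbf{The gap.} You claim that no irreducible component of $E$ lies in $\mathrm{Ex}(\sigma)$, and justify this by saying that since $\pi|_{E_j}$ is injective, ``no such subset can lie in $E$.'' This inference is wrong: injectivity of $\pi$ on a subset $S\subset E_j$ does not prevent $S$ from being contained in the exceptional locus, because points of $S$ may share fibers with points \emph{outside} $E_j$. What you actually need is the following: through a generic point $z$ of a component $Z$ of $E$ there is an irreducible curve $F\subset\pi^{-1}(\pi(z))$; this $F$ lies in $E$, and since $z$ belongs to no other component of $E$, in fact $F\subset Z$. Now Lemma~\ref{lem:contraction} does apply: $F$ is contracted by $\pi$, so $F\not\subset E_j$ for any $j$, hence $Z\not\subset E_j$. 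With this correction your density reduction is valid, and the rest of your argument (the containment $F'\subset V_j=\mathrm{span}(W_j,\bar p)$ and the codimension-one intersection) is correct.

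\textbf{Comparison with the paper.} The paper's argument is shorter: given $x\in E$, pick any $y\neq x$ with $\pi(x)=\pi(y)$, assert that neither lies in an exceptional divisor, let $L$ be the line through $\sigma(x)$ and $\sigma(y)$, and invoke the dichotomy of Lemma~\ref{lem:contraction} directly on $\tilde L$ (it carries two points with the same $\pi$-image, so it cannot be embedded, hence is contracted, hence $L$ meets every $W_i$). Your argument replaces this last appeal to the dichotomy with the geometric observation that $\sigma(F)$ lies in each linear span $V_j$, which is a pleasant alternative but not needed: once you have two fiber-mates $p,q\notin\mathrm{Ex}(\sigma)$, the line through $\sigma(p),\sigma(q)$ already has non-injective $\pi$ on its strict transform, and Lemma~\ref{lem:contraction} finishes immediately. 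The paper's version is terser about why one may take both points outside the exceptional divisors, and your density reduction is one clean way to make that step rigorous --- once it is justified correctly.
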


\begin{proof}
If a line $L \subset \PP^{d-1}_k$ passes through all of the $W_i$, then its strict transform in $\tilde
C$ is contracted in $C$ by Lemma~\ref{lem:contraction}, so any point on the line
is in the exceptional locus.
Conversely, suppose that $x$ is in the exceptional locus, so there exists a
point $y$ in~$C$ such that $\pi(x) = \pi(y)$. By Lemma~\ref{lem:contraction},
$x$ and~$y$ cannot be in an exceptional divisor. Thus, we take the projections
of $x$ and~$y$ to $\PP^{d-1}_k$ and let $L$ be the line through them.
Lemma~\ref{lem:contraction} implies that $\tilde L$ must be contracted to a
point by $\pi$, and therefore $L$  intersects all the linear spaces $W_i$.
\end{proof}

\begin{example} \label{ex:hexagon}
Fix a basis $e_1, e_2, e_3$ of~$V$. Let $L_i = R\{\pi e_1, \pi
e_2, \pi e_3, e_i\}$ and consider the secondary component
associated to $v = [R\{e_1, e_2, e_3\}]$. Thus, $W_i = ke_i$.
If $\Gamma = \{[L_1], [L_2]\}$, then $C$ is the
blow-up of $\PP_k^2$ at the points $W_1$ and $W_2$, followed by the blow-down of
the line between them, yielding $\PP_k^1 \times \PP_k^1$. If $\Gamma =
\{[L_1], [L_2], [L_3]\}$, then $\tilde C$ and $C$ are both isomorphic to the
blow-up of $\PP^2_k$ at the three points $W_1$, $W_2$ and~$W_3$, since there
are no lines passing through all three points. This is the same configuration as
in Example~\ref{ex:HexagonWithThreeTriangles}. \qed
\end{example}

\begin{example} \label{ex:sailboat}
Suppose that $L_1 = R\{e_1, \pi e_2, \pi e_3\}$, $L_2 = R\{\pi e_1, e_2,
\pi e_3\}$ and $L_3 = R\{e_1 + e_2, \pi e_2, \pi e_3\}$, and let $\Gamma =
\{[L_1], [L_2], [L_3]\}$. Then $\M(\Gamma)_k$ has
a secondary component that is singular, indexed by
 $L_4 = R\{e_1, e_2, e_3\}$. The
corresponding primary component of $\M(\Gamma \cup \{[L_4]\})_k$ is the 
blow-up of~$\PP_k^2$ at three collinear points
$W_1 = k e_1$, $W_2 = ke_2$, $W_3 = k (e_1+e_2)$.
The secondary component in~$\M(\Gamma)_k$ arises by blowing down the
strict transform of the line through these three points.
Algebraically, the ideal
\begin{equation}
\label{eq:sailboat}
\langle x_1, z_1, y_2, z_2 \rangle \cap
\langle x_1, z_1, x_3, z_3 \rangle \cap
\langle y_2, z_2, x_3, z_3 \rangle \cap
\langle x_1, y_2, x_3, z_1z_2 y_3 + z_1x_2z_3 - y_1 z_2 z_3 \rangle
\end{equation}
represents $\M(\Gamma)_k$, where
$(x_i : y_i : z_i)$ are the coordinates on the 
$i$th factor of $\PP^2 {\times} \PP^2 {\times} \PP^2$. 
The last prime ideal in (\ref{eq:sailboat}) is the secondary component.
It has a quadratic cone singularity at the point $\bigl((0\!:\!1\!:\!0), (1\!:\!0\!:\!0), (0\!:\!1\!:\!0)\bigr)$.
This special fiber looks like a {\em sailboat}:
the secondary component is the boat; its sails are  three
projective planes attached at three of its lines.
In the census of Theorem \ref{thm:TriangleCensus},
this is the unique type which is not a union of toric surfaces, so it cannot be drawn as a 
$2$-dimensional polyhedral complex.
\qed \end{example}

\section{Triangles}

In this section we classify Mustafin varieties for $d=n=3$. 
We refer to a triple $\Gamma$ in $\mathfrak{B}^0_3$ as a \emph{Mustafin triangle}.
Two such triangles are said to have the same {\em combinatorial type} if the special fibers of the associated Mustafin varieties are isomorphic as $k$-schemes.
Note that we introduced combinatorial types already after Proposition~\ref{prop:classify2}. Since this notion only involves the special fibers, it is different from the notion of isomorphisms of Mustafin varieties investigated in Section~2.

\begin{thm}\label{thm:TriangleCensus}
There are precisely $38$ combinatorial types of Mustafin triangles.
  In addition to the $18$ planar types 
(in Figure \ref{fig:18types}) there are $20$ non-planar types
(in Table~\ref{tbl:isomorphism-counts}).
\end{thm}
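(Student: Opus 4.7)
The plan is to handle the planar and non-planar classes separately, since the underlying methods are quite different.

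For the $18$ planar types, we invoke Example \ref{ex:planar-3-3} together with Theorem \ref{thm:twistedVeronese}. The theorem identifies $\M(\Gamma)_k$ with the toric degeneration associated to the regular mixed subdivision $\Delta_\Gamma$ of $3\Delta_2$, so the $k$-scheme isomorphism type of the special fiber is determined by $\Delta_\Gamma$. The mixed subdivisions of $3\Delta_2$ are in bijection with the polyhedral subdivisions of $\Delta_2 \times \Delta_2$, which are enumerated by the faces of the secondary polytope $\Sigma(\Delta_2 \times \Delta_2)$, whose $f$-vector is $(108,222,144,30)$. Quotienting by the natural $S_3 \times S_3$ symmetry (relabeling the three lattices and relabeling the coordinates of the apartment) and discarding the two facet orbits on which two of the three points of $\Gamma$ coincide yields the $18$ orbits drawn in Figure \ref{fig:18types}.

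For the $20$ non-planar types the approach is computational. We first normalize the configuration using Theorem \ref{thm:same-config}: up to the $PGL(V)$-action we may assume $L_1 = Re_1 + Re_2 + Re_3$, and by the elementary divisor theorem applied to the pair $(L_1,L_2)$ we may further take $L_2 = R e_1 + \pi^{a} R e_2 + \pi^{a+b} R e_3$ with integers $a,b \geq 0$. The third lattice is then represented by some $g_3 \in GL(V)$, and the residual freedom is the left action of the joint stabilizer of $L_1,L_2$ in $GL_3(R)$ (block-triangular matrices with $\pi$-adic conditions) combined with the right action of $GL_3(R)$. The configuration is planar exactly when this double orbit contains a diagonal representative of $g_3$; all remaining orbits feed the non-planar census.

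The bulk of the work is then to compute, for each non-planar orbit, the multihomogeneous ideal $I_2\bigl((g_1,g_2,g_3)(X)\bigr) \cap R[X]$ of Section 2, extract its primary decomposition at $\pi$, and record the resulting $k$-scheme up to isomorphism. The invariants used to distinguish types are: the number of components (bounded by $\binom{n+d-2}{d-1} = 6$ by Theorem \ref{sec:must;thm:basicfacts}), the multiset of isomorphism types of components (each primary component is a blow-up of $\PP_k^2$ at a configuration of points by Theorem \ref{thm:primary-components}, and every secondary component is rational and arises from a primary component of a larger configuration via Lemma \ref{lem:secondary-component}), and the reduction complex encoding their incidences. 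The singular ``sailboat'' surface of Example \ref{ex:sailboat} serves as a uniquely distinguishing feature for one of the twenty types, as remarked there.

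The main obstacle is combinatorial exhaustiveness: one must verify that stratifying the $g_3$-parameter space by the combinatorial type of the special fiber produces only finitely many strata and that every stratum matches an entry in Table \ref{tbl:isomorphism-counts}. This is achieved by a Gr\"obner-fan style argument on the valuations and residues of the entries of $g_3$, reducing the problem to a finite case check that is most conveniently executed with computer algebra. Once this list is in hand, distinctness of the $20$ non-planar types is read directly from the invariants above, and realizability is witnessed by the explicit representatives used during the computation, completing the census.
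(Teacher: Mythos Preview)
Your treatment of the $18$ planar types via Example~\ref{ex:planar-3-3} and Theorem~\ref{thm:twistedVeronese} matches the paper and is fine.

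For the non-planar types, however, your argument has a genuine gap. You normalize $L_1$ and $L_2$ and then propose to stratify the space of possible $g_3$ by combinatorial type of the special fiber, asserting that a ``Gr\"obner-fan style argument on the valuations and residues of the entries of $g_3$'' reduces this to a finite case check. But this is exactly the content of the theorem, and you have not supplied the mechanism. The entries of $g_3$ range over $K$, with infinitely many possible valuation profiles and, for each profile, residue parameters in~$k$ (which may be infinite). A Gr\"obner-fan argument would at best control the monomial-type locus; it does not obviously handle the non-monomial strata such as the sailboat of Example~\ref{ex:sailboat} or the two configurations in Example~\ref{ex:three-bent}, where the answer genuinely depends on whether certain residues are collinear. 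Without a concrete finiteness argument, ``conveniently executed with computer algebra'' is not a proof.

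The paper avoids this by introducing a geometric invariant you do not use: the number of \emph{bent lines} among the three pairs, and whether bend points coincide. Proposition~\ref{prop:existence-secondarycomp} ties this invariant to the number of secondary components, so together with the primary components it determines the row and column of Table~\ref{tbl:isomorphism-counts}. Lemma~\ref{lem:two-unbent} gives explicit normal forms when at most one line is bent, and Corollary~\ref{cor:three-unbent} shows that zero bent lines forces a single apartment. Lemma~\ref{lem:good-coords} and Corollary~\ref{cor:six-comps} pin down the monomial-type case (six components), and Lemma~\ref{lem:almost-monomial} handles the five-component case by degenerating the one non-monomial component. The remaining cells of the table are then finished by short direct arguments on the geometry of the bend points. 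This bent-line bookkeeping is what makes the case analysis finite and is the missing idea in your proposal.
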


\begin{table}\begin{centering}\begin{tabular}{|c|c|c|c|c|}
\hline
& \multicolumn{4}{c|}{Number of bent lines} \\
Number of components & 0 & 1 & 2 & 3 \\
\hline
3  & {\it 2}\,+\,{\bf 0} & {\it 1}\,+\,{\bf 0} & &  \\
4  & &  {\it 3}\,+\,{\bf 3} & {\it 1}\,+\,{\bf 0} & {\it 1}\,+\,{\bf 1} \\
5  & &  & {\it 5}\,+\,{\bf 6} & {\it 0}\,+\,{\bf 2} \\
6  & &  &   & {\it 5}\,+\,{\bf 8} \\
\hline
\end{tabular}\par\end{centering}
\caption{Classification of  the
{\it 18 planar} and {\bf 20 non-planar} types of Mustafin triangles.}
\label{tbl:isomorphism-counts}
\end{table}

The term {\em non-planar} is used as in \cite[\S 5]{CS}. It refers to combinatorial types
consisting entirely of configurations $\Gamma$ that do not lie in a single apartment.
The rest of this section is devoted to proving Theorem~\ref{thm:TriangleCensus}.
Since the planar configurations were enumerated in Example~\ref{ex:planar-3-3},
our task is to classify all non-planar Mustafin triangles, and to show that all
types are realizable over any valuation ring $(R,K,k)$.

The convex hull of any two points $v$ and~$w$ in $\mathfrak{B}_3$ is a tropical line segment
which is contained in a single apartment. If this tropical line consists of a single
Euclidean line segment, then we say that the line is \emph{unbent}. Otherwise,
the tropical line consists of two Euclidean line segments and we call the line
\emph{bent} and the junction of the two lines the \emph{bend point}. 
Note that in this section the term \emph{line} always means tropical line. 
By the bend points of a configuration $\Gamma$
we mean the bend points of all pairs of points in~$\Gamma$.

\begin{prop}\label{prop:existence-secondarycomp}
For a finite subset $\Gamma $ of $\mathfrak{B}_3^0$,
the secondary components of $\M(\Gamma)_k$ are in bijection with the bend points
of $\Gamma$ which are not themselves elements of the set $\Gamma$.
\end{prop}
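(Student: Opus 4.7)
The plan is to establish the claimed bijection explicitly in two directions.

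For the forward direction, given a bend point $p \notin \Gamma$ of some pair $\{[L_i], [L_j]\} \subset \Gamma$, the triple $\{[L_i], [L_j], p\}$ lies in a single apartment (since $p$ is on the tropical line joining $[L_i]$ and $[L_j]$). Theorem~\ref{thm:twistedVeronese} provides, via the mixed subdivision of $2\Delta_2$, the unique secondary (parallelogram) component of $\M(\{[L_i], [L_j]\})_k$, indexed by the bend $p$. By Lemma~\ref{lem:projection} applied to $\{[L_i], [L_j]\} \subset \Gamma$, this secondary has a unique birational lift $C_p$ in $\M(\Gamma)_k$. I would then show $C_p$ is secondary: if instead $C_p$ were the primary of some $[L_k] \in \Gamma$, one could trace the primary of $p$ in $\M(\Gamma \cup \{p\})_k$ through $\M(\Gamma) \to \PP(L_k)_k$ and apply Lemma~\ref{lem:projection} to $\{[L_k]\} \subset \Gamma \cup \{p\}$, forcing the primary of $p$ to coincide with the primary of $[L_k]$ in $\M(\Gamma \cup \{p\})_k$ --- impossible by Corollary~\ref{cor:primary}. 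Injectivity of $p \mapsto C_p$ follows by applying the same uniqueness argument to $\Gamma \subset \Gamma \cup \{p_1, p_2\}$.

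For the reverse direction, let $C$ be any secondary component of $\M(\Gamma)_k$. Lemma~\ref{lem:secondary-component} furnishes a vertex $v \notin \Gamma$ and a primary $\tilde C$ of $\M(\Gamma \cup \{v\})_k$ mapping birationally to $C$. I would select a minimal subset $\Gamma' \subseteq \Gamma$ for which $C$ still maps birationally to a component $C'$ of $\M(\Gamma')_k$ via the natural projection. Since $C$ is secondary, $C'$ must be secondary as well (else $C$ would lift a primary and hence be primary itself by Corollary~\ref{cor:primary}), so $|\Gamma'| \geq 2$. If $|\Gamma'| = 2$, Theorem~\ref{thm:twistedVeronese} identifies $C'$ as the parallelogram cell of the pair's mixed subdivision, indexed by the bend point $p$, which is necessarily not in $\Gamma$ (else $C$ would be primary). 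Therefore $C = C_p$, matching $C$ to a bend point of a pair in $\Gamma$ as required.

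The main obstacle is ruling out $|\Gamma'| \geq 3$: this is the crux of the argument and depends decisively on $d = 3$. The key input is that apartments of $\mathfrak{B}_3$ are two-dimensional, so every maximal cell in a mixed subdivision of $n\Delta_2$ is either a triangle (a primary cell) or a parallelogram, and every parallelogram is combinatorially visible as a parallelogram cell in a two-element sub-configuration. I would prove this by a combinatorial analysis of planar tropical polygons extending the triangle case (Example~\ref{ex:planar-3-3}) to general $n$, combined with Lemma~\ref{lem:projection} to trace birationality through the projection to pair-configurations. Non-planar $\Gamma$ would be handled by the fact that each pair lies in a common apartment, reducing to the planar case pair-by-pair.
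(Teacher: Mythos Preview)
Your forward direction (bend point $\mapsto$ secondary component) is reasonable in outline and, together with injectivity, essentially matches what the paper leaves implicit. The genuine problem is in the reverse direction.

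Your plan there is to choose a minimal $\Gamma' \subseteq \Gamma$ for which $C$ still projects birationally onto a component of $\M(\Gamma')_k$, and then argue that $|\Gamma'| = 2$. You correctly identify this as the crux, but your proposed resolution does not work. The observation that ``every parallelogram cell in a mixed subdivision of $n\Delta_2$ is visible in some two-element subconfiguration'' is a statement about planar configurations only; your reduction of the non-planar case to ``pair-by-pair'' does not actually produce a pair $\{v_i,v_j\}$ for which the projection of $C$ is birational onto a component. Knowing that each pair sits in an apartment says nothing about how the given secondary component $C$ behaves under projection to that pair. So as written, the minimality argument has no mechanism to terminate at $|\Gamma'|=2$ when $\Gamma$ is not contained in one apartment.

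The paper avoids this entirely with a Chow-ring argument. The cycle class of any component of $\M(\Gamma)_k$ is a multiplicity-free sum of degree-$(2n-2)$ monomials in $\Z[H_1,\ldots,H_n]/\langle H_1^3,\ldots,H_n^3\rangle$. If $C$ is secondary, then no monomial in $[C]$ can omit a variable (a monomial omitting $H_k$ would have to be $\prod_{i\neq k} H_i^2$, forcing $C$ to be the $[L_k]$-primary). Hence every monomial in $[C]$ has exactly two variables of degree~$1$ and the rest of degree~$2$; picking one such monomial $H_iH_j\prod_{l\neq i,j}H_l^2$ immediately singles out the pair $\{v_i,v_j\}$. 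Pushing forward along $\M(\Gamma)_k \to \M(\{v_i,v_j\})_k$ sends this monomial to $H_iH_j$, so $C$ maps birationally onto the unique secondary component of the pair, which is indexed by its bend point. This replaces your minimality search with a one-line degree count and works uniformly regardless of whether $\Gamma$ lies in a single apartment.
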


\begin{proof}
Let $\Gamma = \{v_1,\ldots, v_n\}$ and let
$\Gamma'$ be the union of $\Gamma$ and the set of bend points of~$\Gamma$.
By Lemma~\ref{lem:projection}, each component $C$ of $\M(\Gamma)_k$ is the image of a 
{\em unique} component $C'$ of~$\M(\Gamma')_k$ under the natural projection. 
Suppose that $C$ is secondary in  $\M(\Gamma)_k$. To establish the bijection,
we must prove that $C'$ is primary in $\M(\Gamma')_k$.

Recall from the proof of
Lemma~\ref{lem:projection} that the cycle class of any component of
$\M(\Gamma)_k$ is a sum of distinct monomials of $\Z[H_1, \ldots, H_n]/
\langle H_1^3, \ldots, H_n^3 \rangle$ having degree $2n-2$. Since $C$ is not a
primary component, every term in this cycle class involves every variable.
Therefore, after permuting the factors, we can assume that the cycle class
of~$C$ contains the term $H_1 H_2 H_3^2 \cdots H_n^2$. The image of~$C$
under the projection $\M(\Gamma) \rightarrow \M(\{v_1, v_2\})$ must be a
component, and since $C$ is a secondary component, so is its image. The vertices
$v_1$ and~$v_2$ lie in a common apartment, and Theorem~\ref{thm:twistedVeronese}
implies that the tropical convex hull of $v_1$ and~$v_2$ must have a bend point,
which we denote $w$,
corresponding to the secondary component. By
Lemma~\ref{lem:projection}, the secondary component is the image of some
component of $\M(\{v_1, v_2, w\})_k$. Using
Theorem~\ref{thm:twistedVeronese} we see that the only candidate is the
$w$-primary component.  The $w$-primary component
of~$\M(\Gamma')_k$ maps onto the $w$-primary component of $\M(\{v_1, v_2,
w\})_k$ and then onto the secondary component of $\M(\{v_1, v_2\})_k$, so it 
equals the unique component $C'$ that maps surjectively onto~$C$.
\end{proof}

Our classification of non-planar Mustafin triangles will proceed in two phases.
For special fibers with few components, the key result is
Lemma~\ref{lem:two-unbent} below. On the
other hand, for special fibers with five or six components,
Lemma~\ref{lem:good-coords} will imply that that their ideals 
are monomial or ``almost monomial''. Before getting to these technical
phases, however, we first discuss all bold face entries in Table
\ref{tbl:isomorphism-counts}, starting with the last column.

\begin{example}[Mustafin triangles with three bent lines]
\label{ex:three-bent}
Let $\Gamma $ be a triple in $\mathfrak{B}_3^0$ with three bent lines.
The first row of Table \ref{tbl:isomorphism-counts} concerns types
without any secondary component, which is not possible if there are three bent
lines.
In the second row we find types
with one secondary component. There is one planar possibility, namely,
the type  \includegraphics[scale=.2]{typ-c1-N.eps},
and one non-planar possibility, namely the ``sailboat'' in
Example \ref{ex:sailboat}.
Corollary \ref{cor:six-comps} will take care of the last row in
Table \ref{tbl:isomorphism-counts}: these are the $13$ monomial ideals
in \cite[Table 1]{CS} that lie on the main component of the
Hilbert scheme $H_{3,3}$. In the pictures offered in \cite[Figure 2]{CS}
we recognize the {\it 5} planar monomial types
      \includegraphics[scale=.19]{typ-a1-N.eps} 
      \includegraphics[scale=.19]{typ-a2-N.eps} 
      \includegraphics[scale=.19]{typ-a3-N.eps} 
      \includegraphics[scale=.19]{typ-a4-N.eps} 
      \includegraphics[scale=.19]{typ-a5-N.eps},
      and the {\bf 8} non-planar types are obtained from these
   by {\em regrafting triangles}.

 An especially interesting entry in Table \ref{tbl:isomorphism-counts}  
 is the rightmost entry ${\it 0} + {\bf 2}$ in the third row.
    No planar types have two secondary components and three bent lines,
 but there are two non-planar types. Their pictures are
 shown in Figure~\ref{fig:airplanes}. The three lattices
\begin{align}\label{eqn:airplane-config}
L_1 &= R \{\pi e_1, \pi e_2, e_3\},  &
L_2 &= R \{e_1, \pi^2 e_2, \pi^2 e_3\},  &
L_3 &= R \{e_1 + \pi e_2, \pi^2 e_2, \pi^2 e_3\}
\end{align}
give a Mustafin variety whose special fiber is defined by the ideal
\begin{equation}\label{eqn:airplane}
\langle y_1,\! z_1,\! x_2,\! z_2 \rangle \cap
\langle y_1,\! z_1,\! x_3,\! z_3 \rangle \cap
\langle x_2,\! \underline{y_2},\! x_3,\! \underline{y_3} \rangle \cap
\langle y_1,\! z_1,\! x_2,\! x_3 \rangle \cap
\langle \underline{z_1},\! x_2,\! x_3,\! z_2 y_3 - y_2 z_3 \rangle.
\end{equation}
The primary components of this special fiber are all isomorphic to $\PP^2_k$ and
embedded in~$\PP^2 {\times} \PP^2 {\times} \PP^2$ as coordinate linear spaces. The two secondary components are
isomorphic to $\PP^1_k {\times} \PP^1_k$. One of these copies of $\PP^1_k {\times}
\PP^1_k$ is embedded as a coordinate linear space. The other is embedded as
a coordinate linear space times the diagonal of the previeous secondary component.
Thus, the intersection of the two secondary components is the diagonal in the
first and a line of one of the rulings in the second. Two of the primary
components are attached along coordinate lines of the former secondary
component.  The final primary component is glued along the unique coordinate
line of the diagonal secondary component which does not intersect the other two
primary components.

Both initial ideals of~(\ref{eqn:airplane}) belong to isomorphism class~11 from~\cite[\S 5]{CS}
which is the fifth picture in the second row of \cite[Figure 2]{CS}.
The special fiber~(\ref{eqn:airplane}) is obtained from that  picture 
 by removing the two uppermost parallelograms and
replacing them with a long rectangle which is attached to the diagonal of the lower
parallelogram.

\begin{figure}
\centering
\includegraphics[width=7.0cm]{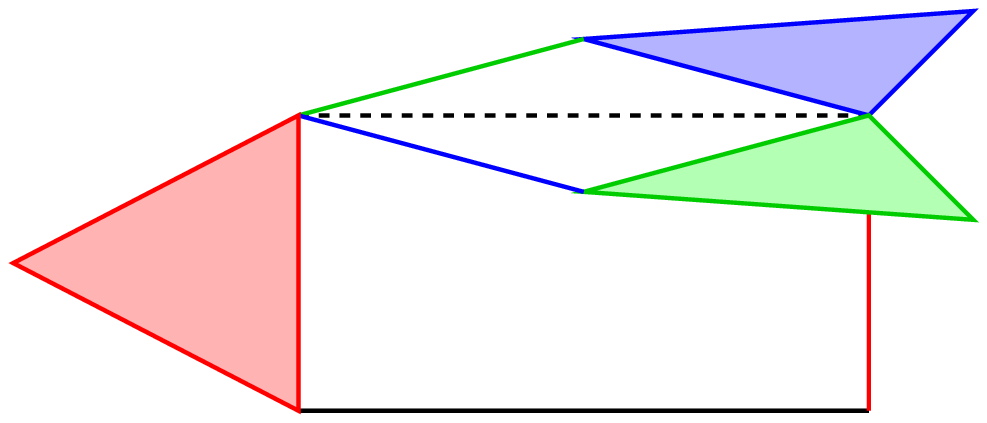} \qquad 
\includegraphics[width=6.6cm]{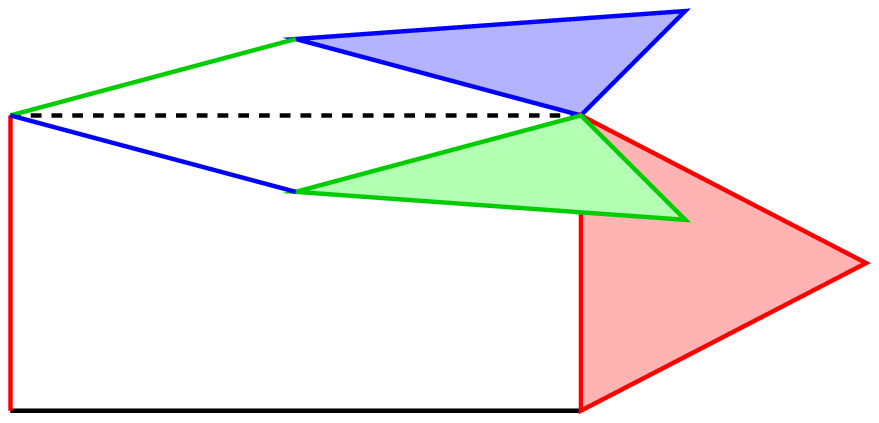} 
\caption{
The two combinatorial types of Mustafin triangles with three bent lines and two secondary components. The ideal (\ref{eqn:airplane}) is on the left while (\ref{eqn:mutant-airplane})
is on the right.}
\label{fig:airplanes}
\end{figure}

The other non-planar special fiber for the entry {\it 0} + {\bf 2} of
Table \ref{tbl:isomorphism-counts} is the variant
of the above configuration by taking 
$L_1' = \{\pi e_1, \pi e_3, e_2\}$
instead of $L_1$ in~(\ref{eqn:airplane-config}). Its ideal is
\begin{equation}\label{eqn:mutant-airplane}
\langle y_1,\! z_1,\! x_2,\! z_2 \rangle \cap
\langle y_1,\! z_1,\! x_3,\! z_3 \rangle \cap
\langle x_2,\! \underline{z_2},\! x_3,\! \underline{z_3} \rangle \cap
\langle y_1,\! z_1,\! x_2,\! x_3 \rangle \cap
\langle \underline{y_1},\! x_2,\! x_3,\! z_2 y_3 - y_2 z_3 \rangle.
\end{equation}
The components of~(\ref{eqn:mutant-airplane}) are identical to those
of~(\ref{eqn:airplane}) except that the final primary component is glued along
the other coordinate line of the diagonal secondary component, so that all three
primary components intersect.  Its initial ideals are of isomorphism class~13
from \cite[\S 5]{CS}, which is the left picture in the third row of \cite[Figure 2]{CS}.
The special fiber~(\ref{eqn:mutant-airplane}) is again obtained 
 by replacing the two uppermost parallelograms
with a long rectangle which is attached to the diagonal of the lower
parallelogram.
 \qed
\end{example}

We now come to the first technical phase in our classification of Mustafin triangles.

\begin{lem}\label{lem:two-unbent}
Let $[L_1]$, $[L_2]$ and~$[L_3]$ be distinct points in $\mathfrak B_3^0$ such
that the line between $[L_1]$ and~$[L_i]$ is unbent for $i=2,3$. Either the three points lie in a single apartment, or there exists a
basis $e_1, e_2, e_3$ of $L_1$ and integers $0 < t < s, u$
such that the other lattices can be written in one of the following forms (possibly after exchanging $[L_2]$ and $[L_3]$):
\begin{enumerate}
\item $L_2 = R\{e_1, \pi^s e_2, \pi^s e_3\}$ and
$L_3 = R\{e_1 + \pi^t e_2, \pi^u e_2, \pi^u e_3\}$,
\item $L_2 = R\{e_1, \pi^s e_2, \pi^s e_3\}$ and
$L_3 = R\{e_1 + \pi^t e_3, e_2, \pi^u e_3\}$,
\item $L_2 = R\{e_1, e_2, \pi^s e_3\}$ and
$L_3 = R\{e_1 + \pi^t e_3, e_2, \pi^u e_3\}$.
\end{enumerate}
\end{lem}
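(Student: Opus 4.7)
The plan is to fix coordinates, normalize $L_2$, and then classify $L_3$ via the action of the stabilizer of the pair $(L_1, L_2)$ on the possible positions of $L_3$ inside $L_1$.

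First I would normalize~$L_2$. Since any two vertices of~$\mathfrak{B}_3$ lie in a common apartment, I fix a basis $e_1, e_2, e_3$ of~$V$ that diagonalizes both~$L_1$ and~$L_2$, and rescale so that $L_1 = R\{e_1, e_2, e_3\}$. The hypothesis that $[L_1]$--$[L_2]$ is unbent translates, via the classification of edge directions in an apartment of~$\mathfrak{B}_3$ from Section~\ref{sec:conf-one-apartm}, into the statement that the exponent vector of~$L_2$ modulo $\Z(1,1,1)$ takes only two values. After rescaling by a power of~$\pi$ and permuting basis vectors, this leaves exactly two possibilities: Case~A, $L_2 = R\{e_1, \pi^s e_2, \pi^s e_3\}$, or Case~B, $L_2 = R\{e_1, e_2, \pi^s e_3\}$, with $s > 0$. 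I write $U_2 \subset L_1/\pi L_1 \cong k^3$ for the image of~$L_2$; it is a line in Case~A and a hyperplane in Case~B.

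Next I would describe $L_3$ in a parallel way. After rescaling we may assume $L_3 \subseteq L_1$ and $L_3 \not\subseteq \pi L_1$; the unbent hypothesis for $[L_1]$--$[L_3]$ then forces one of two shapes: either (i)~$L_3 = R f_1 + \pi^u L_1$ for some primitive $f_1 \in L_1$, or (ii)~$L_3 = \{v \in L_1 : \lambda(v) \in \pi^u R\}$ for some $\lambda \in L_1^*$ whose reduction is nonzero. Let $U_3 \subset L_1/\pi L_1$ be the corresponding image of~$L_3$: a line in case~(i), a hyperplane in case~(ii). The stabilizer $\Sigma \subset GL_3(R)$ of~$(L_1, L_2)$ consists of matrices whose entries below the block determined by~$U_2$ lie in $\pi^s R$; its reduction mod~$\pi$ is the parabolic $\bar{\Sigma} \subset GL_3(k)$ preserving~$U_2$. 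I would first act by~$\bar\Sigma$ to bring $U_3$ into a standard position, and then act by higher-order elements of~$\Sigma$ to normalize the finer data of $f_1$ or~$\lambda$.

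The heart of the argument is a case split on the position of~$U_3$ relative to~$U_2$ inside $k^3$. In every transverse configuration — two distinct lines in Case~A${+}$(i) whose $\bar\Sigma$-orbits differ, a line and a hyperplane with $U_2 \not\subseteq U_3$ in Case~A${+}$(ii), or two distinct hyperplanes in Case~B${+}$(ii) — I would construct an explicit basis of~$V$ simultaneously diagonalizing $L_1, L_2, L_3$, which puts the triple in a common apartment. In the remaining aligned subcases (Case~A${+}$(i) with $U_2 = U_3$, Case~A${+}$(ii) with $U_2 \subset U_3$, Case~B${+}$(ii) with $U_2 = U_3$), the $\Sigma$-normalization of $f_1$ or~$\lambda$ leaves exactly one free integer parameter $t \geq 1$, namely the smallest $\pi$-adic depth at which the generator of~$L_3$ fails to be diagonal in $\{e_1, e_2, e_3\}$. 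Reading off generators yields precisely the three forms (1), (2), (3) of the lemma; Case~B${+}$(i) with $U_3 \subset U_2$ becomes (2) after exchanging $L_2$ and $L_3$, which accounts for the clause allowing such a swap.

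Finally I would verify the strict inequalities $0 < t < s$ and $t < u$. If $t \geq s$, then the offending $\pi^t$-term of the normalized generator lies in $\pi^s R$ and can be absorbed by an element of~$\Sigma$, bringing $L_3$ into diagonal form for the apartment of~$L_2$; if $t \geq u$, then $\pi^t e_i \in \pi^u L_1 \subseteq L_3$ and the form again collapses to a diagonal one in the $\{e_i\}$-basis. Either collapse returns us to the planar alternative. The main obstacle will be the bookkeeping: one must track the $\bar\Sigma$-orbits of $U_3$, the residual $\Sigma$-action on the higher-order terms of $f_1$ or~$\lambda$, and the three independent depth parameters $s, t, u$, and then produce a common diagonalization in each transverse subcase. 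These are all short linear-algebra computations but must be carried out carefully case by case.
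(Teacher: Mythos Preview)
Your proposal is correct and follows essentially the same case analysis as the paper, but the organization differs. The paper writes $L_i = M_i + \pi^{s_i} L_1$ with $M_i$ a direct summand of $L_1$, and its governing invariant is whether $\operatorname{rk}(M_2 \cap M_3)$ equals $\dim_k(\overline{M_2} \cap \overline{M_3})$: equality produces a basis of $L_1$ whose subsets span both $M_i$, hence a common apartment, and strict inequality forces the three rank combinations $(1,1)$, $(1,2)$, $(2,2)$ that yield forms (i), (ii), (iii). Your route is to fix $L_2$ first and then normalize $L_3$ by the stabilizer $\Sigma \subset GL_3(R)$ of the pair $(L_1,L_2)$, splitting on whether the reductions $U_2, U_3 \subset L_1/\pi L_1$ are transverse or aligned. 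These are two packagings of the same dichotomy: your ``transverse'' is exactly the paper's equality of rank and dimension, and your four shape combinations A/B $\times$ (i)/(ii) match the paper's rank pairs. The paper's invariant is slightly cleaner in that it handles the planar case uniformly in one paragraph before any shape split, whereas your approach embeds that check inside each subcase; conversely, your group-theoretic framing makes the residual parameter $t$ and its bounds $t < s, u$ more transparently the obstruction to further normalization. One small omission: you should list Case B+(i) with $U_3 \not\subset U_2$ among your transverse configurations (it is the swap of your A+(ii) transverse case).
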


\begin{proof}
We can choose representatives $L_2 = M_2 + \pi^{s} L_1$ and $L_3 =
M_3 + \pi^{u} L_1$ such that $s, u > 0$ and the $R$-modules $M_i$
are direct summands of $L_1$.  The
$R$-module $M_2 \cap M_3$ is also a direct summand of $L_1$, and its rank is
smaller or equal to the $k$-dimension of $\overline M_2 \cap \overline M_3$.
Here $\overline{M_2}$ and
$\overline{M_3}$ are the subspaces of
$L_1 / \pi L_1 \simeq k^3$ induced by $M_2$ and $M_3$.  

Suppose the rank of $M_2 \cap M_3$ equals the
$k$-dimension of $\overline M_2 \cap \overline M_3$. We claim that 
the lattices are in a single apartment.  We pick a $k$-basis
$\overline e_1, \overline e_2, \overline e_3$ for $L_1/\pi L_1$ such that
 $\overline M_2$, $\overline M_3$ and $\overline M_2 \cap \overline M_3$ are all
spanned by subsets of this basis. By our assumption on the dimension, we have
$\overline{M_2 \cap M_3} = \overline{M_2} \cap \overline{M_3}$. Hence we can
lift the basis elements in $\overline M_2 \cap \overline M_3$ to $M_2 \cap M_3$.
We also lift the other basis elements, choosing representatives in $M_2$
and~$M_3$ for the elements in $\overline{M_2}$ and $\overline{M_3}$,
respectively. By Nakayama's Lemma, $L_1$ is generated by the lifts of all
three basis elements, and
$M_2$ and $M_3$ are generated by subsets thereof. Thus, 
$\{[L_1],[L_2],[L_3]\}$ lies
in the apartment defined by this basis.

We now assume that the rank of $M_2 \cap M_3$ is strictly smaller
than the $k$-dimension of $\overline M_2 \cap \overline M_3$. Note that the
ranks of $M_2$ and $M_3$ are either one or two.  If $M_2$ and $M_3$ both have
rank one, our assumption implies  that  $M_2 \cap M_3 = 0$ and 
${\rm dim}_k(\overline{M_2} \cap \overline{M_3}) = 1$, so that $\overline{M_2}
= \overline{M_3}$.  Let $e_1$ be a generator of $M_2$. We lift  $\overline{e_1}$
to a generator   $e_1 + \pi^t e_2$  of $M_3$, where $t
\geq 1$ and $e_2 \in L_1 \backslash \pi L_1$. Since $M_2 \cap M_3 = 0$, we can assume
that $\overline{e_1}, \overline{e_2}$ are linearly independent. Hence they can be 
 completed to a basis of $L_1 / \pi L_1$, which lifts  to
a basis  $e_1, e_2, e_3$ of $L_1$.  If $t \geq u$, then $L_3 = M_3 + \pi^u L_1 =
R\{e_1, \pi^u e_2, \pi^u e_3\}$, and all three lattice classes lie in the
apartment given by $e_1,e_2,e_3$.  If $t \geq s$, then $L_2  = M_2 + \pi^s L_1 =
R\{e_1 + \pi^t e_3, \pi^s e_2, \pi^s e_3\}$ and all three lattice classes lie in the apartment 
given by $e_1 + \pi^t e_3, e_2, e_3$.  If $t < s$ and $t<u$, then we are in case (i).

If $M_2$ has rank one and $M_3$ has rank two, the dimension of $\overline{M_2}
\cap \overline{M_3}$ is at most one.  Our assumption implies that it is
one and that $M_2 \cap M_3 = 0$. Let $e_1$ be a generator for $M_2$. We
fix $e_2 \in M_3$ such that $\overline{e_1},
\overline{e_2}$ is a basis of $\overline{M_3}$.
We choose  $e_3'$ to complete $e_1$ and $e_2$ to a basis for $L_1$. Then
$M_3$ is generated by $e_2$ and an element of the form $e_1 + \pi^t u
e_3'$, where
$u$ is a unit in~$R$.
By replacing $e_3'$ with $e_3 = u e_3'$ in our basis, $M_3$ is generated
by $e_2$ and $e_1 + \pi^t e_3$.
If $t \geq s$ or $t \geq u$, the three lattice classes lie in one apartment by the same argument as in the previous case, and 
if $t<s$ and $t<u$, we are in case (ii).

If $M_2$ and $M_3$ both have rank two, then $M_2 \cap M_3$ has rank one, since 
$M_2 \not= M_3$.  Our assumption implies that  $\overline{M_2}
\cap \overline{M_3}$ is two-dimensional, so that $\overline{M_2} =
\overline{M_3}$. Choose a generator $e_2$ of $M_2 \cap M_3$. Since $M_2 \cap
M_3$ is a split submodule of $M_2$, we can complete it to a basis $e_1,e_2$ of
$M_2$. As in the previous case,
we choose some $e_3'$ such that $e_1, e_2, e_3'$ is a basis for $L_1$. Then
$M_3$ can be generated by $e_2$ and an element of the form $e_1 + \pi^t u
e_3'$, where
$u$ is a unit in~$R$.
By replacing $e_3'$ with $e_3 = u e_3'$ in our basis, $M_3$ is generated
by $e_2$ and $e_1 + \pi^t e_3$.
Hence we are in case
(iii) if $t < u$ and $t <s$. Otherwise, the same argument as above shows that
the three lattice classes  lie in one apartment.  \end{proof}

\begin{cor}\label{cor:three-unbent}
If $\Gamma$ has no bent lines, then $\Gamma$ lies
in a single apartment.
\end{cor}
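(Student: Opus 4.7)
The strategy is to apply Lemma~\ref{lem:two-unbent} and exclude its three exceptional cases (i), (ii), (iii) by showing that each forces the remaining line $[L_2][L_3]$ to be bent, contradicting the hypothesis. Writing $\Gamma = \{[L_1], [L_2], [L_3]\}$, the assumption that no line in $\Gamma$ is bent ensures in particular that the lines $[L_1][L_2]$ and $[L_1][L_3]$ are unbent, so Lemma~\ref{lem:two-unbent} applies. Its conclusion is that either $\Gamma$ already lies in a single apartment, which is what we want, or one of the cases (i), (ii), (iii) holds.

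To rule out the exceptional cases, in each I would place $[L_2]$ and $[L_3]$ in a common apartment and compute their displacement vector. This reduces to a direct Smith normal form computation for the basis-change matrix $B_2^{-1} B_3$: after scaling by a suitable power of $\pi$ to clear negative valuations, one uses the unit coefficient appearing in the tilted generator $e_1 + \pi^t e_i$ of $L_3$ as the pivot for elementary row and column operations over $R$. In cases (i), (ii), and (iii) this yields the respective invariant-factor triples $\{t-s,\ u-t,\ u-s\}$, $\{-s,\ t-s,\ u-t\}$, and $\{t-s,\ 0,\ u-t\}$, each interpreted modulo the diagonal $(1,1,1)$.

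The final step invokes the standard description of tropical segments in $\R^d/\R(1,\ldots,1)$: for two lattice points in a common apartment, the segment between them is unbent if and only if the displacement vector has at most two distinct coordinates, since the parametrization $c \mapsto \min(u,\, c\cdot (1,1,1) + v)$ bends exactly where two coordinates transition. Using the strict inequalities $0 < t < s$ and $0 < t < u$, together with their consequence $s + u > 2t$, a short case check shows that each of the three triples above has three pairwise distinct entries: for instance, in case (i), $t - s = u - t$ would force $s + u = 2t$, while $t - s = u - s$ gives $t = u$ and $u - t = u - s$ gives $t = s$, all impossible. Hence $[L_2][L_3]$ is bent in every exceptional case, producing the desired contradiction and forcing $\Gamma$ into a single apartment.

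I expect the main obstacle to be handling the Smith normal form computations uniformly across the three cases; they are routine individually but require careful bookkeeping of the scaling factors, the unifying thread being that the unit entry from the tilted generator of $L_3$ always supplies the key pivot that splits off one invariant factor and reduces the problem to an explicit $2 \times 2$ Smith reduction whose exponents can be read off directly.
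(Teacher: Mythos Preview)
Your proposal is correct and follows exactly the same strategy as the paper: apply Lemma~\ref{lem:two-unbent} and observe that each exceptional case (i)--(iii) forces the line between $[L_2]$ and $[L_3]$ to be bent. The paper's proof is the single sentence ``The exceptional cases in Lemma~\ref{lem:two-unbent} each have a bent line between $L_2$ and~$L_3$,'' whereas you supply the Smith-normal-form computations and the distinctness check that make this explicit; your invariant-factor triples and the resulting pairwise-distinctness arguments are all correct.
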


\begin{proof}
The exceptional cases in Lemma~\ref{lem:two-unbent} each have a bent line
between $L_2$ and~$L_3$.
\end{proof}

The Chow ring of the
product $\PP^2
\times \PP^2 \times \PP^2$ is $\Z[H_1, H_2, H_3]/\langle H_1^3, H_2^3, H_3^3\rangle$.
As seen in the proof of Lemma~\ref{lem:projection}, 
 the special fiber $\M(\Gamma)_k$ 
of a Mustafin triple $\Gamma$ has the class
\begin{equation}\label{eqn:class-triangle} H_2^2H_3^2 + H_1^2H_3^2 + H_1^2H_2^2
+ H_1H_2H_3^2 + H_1H_2^2H_3 + H_1^2 H_2H_3.  \end{equation} The cycle class of
each component of $\M(\Gamma)_k$ is a sum of a subset of these monomials.

\begin{lem}\label{lem:good-coords}
There exist coordinates on $\PP^2 \times \PP^2 \times \PP^2$ such that the
projection of each component of $\M(\Gamma)_k$ is a coordinate linear space.
Each component whose cycle class is one of the monomials
in~(\ref{eqn:class-triangle}) is a product of coordinate linear spaces in this
basis. \end{lem}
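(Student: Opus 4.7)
The plan combines the toric description of pairwise Mustafin varieties, a basis-compatibility argument, and a Chow-class dimension count.

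First, I would establish that every component $C$ of $\M(\Gamma)_k$ has image in each factor $\PP(L_i)_k$ equal to a linear subspace. By Lemma~\ref{lem:projection}, the projection $C \to \M(\{[L_i], [L_j]\})_k$ lands in a unique component of the pairwise Mustafin variety. Since any two vertices of $\mathfrak{B}_3$ lie in an apartment, Theorem~\ref{thm:twistedVeronese} applies to the pair: $\M(\{[L_i], [L_j]\})_k$ is a union of projective toric surfaces, each embedded in $\PP(L_i)_k \times \PP(L_j)_k$ whose projections to both factors are coordinate linear subspaces in the apartment basis that diagonalizes $\{L_i, L_j\}$. Consequently the image of $C$ in $\PP(L_i)_k$ is a coordinate linear subspace with respect to that basis.

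Second, I would choose a single basis of each $L_i$ simultaneously compatible with both pairs containing index $i$. The relevant subspaces in $\PP(L_i)_k \simeq \PP^2_k$ are the $W_{ij}$ and $W_{ik}$ of Theorem~\ref{thm:primary-components}, together with the images of any secondary components arising from bend points as in Proposition~\ref{prop:existence-secondarycomp}. Each such subspace is a point or a line, and since any two linear subspaces of $\PP^2_k$ of dimension at most one can be simultaneously transformed to coordinate form by a change of basis, such a basis exists. The apartment case via Corollary~\ref{cor:three-unbent} and the three explicit normal forms provided by Lemma~\ref{lem:two-unbent} exhibit these bases concretely in the non-apartment configurations.

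Third, for the product assertion, let $C$ have a single-monomial class $H_1^{a_1} H_2^{a_2} H_3^{a_3}$ with $a_1 + a_2 + a_3 = 4$ and each $a_i \leq 2$. Each image $Y_i := \pi_i(C)$ is a coordinate linear subspace by the previous steps, of some dimension $d_i$. Writing the class of $C$ as the pushforward of a class on $Y_1 \times Y_2 \times Y_3$ and using that $C$ is irreducible of dimension $2$, a direct Chow-ring computation forces $d_i = 2 - a_i$ and $C = Y_1 \times Y_2 \times Y_3$, a product of coordinate linear subspaces.

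The main obstacle is the compatibility step in the second paragraph: in configurations outside a single apartment, different pairwise Mustafin varieties give a priori different apartment bases of the same $L_i$, and these need not agree. Lemma~\ref{lem:two-unbent} handles the cases where two unbent lines meet at a common vertex by supplying an explicit common basis; the remaining configurations (with few unbent lines) are reduced to the elementary linear-algebra fact used above, namely that at most two linear subspaces of $\PP^2_k$ of dimension at most one always admit a simultaneous coordinate form.
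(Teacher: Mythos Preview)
Your first and third steps are essentially the paper's own arguments: projecting through the pairwise Mustafin varieties and invoking Theorem~\ref{thm:twistedVeronese} does show that each image in a factor is a linear subspace, and the Chow-class computation in step~3 correctly forces a monomial-class component to equal the product of its projections.

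The gap is in your second step. You assert that in each factor $\PP(L_i)_k$ one only needs to coordinatize ``at most two linear subspaces of dimension at most one,'' but this is neither justified nor true. There are up to five components other than $C_i$, and their images in $\PP(L_i)_k$ can comprise as many as four distinct proper subspaces (two lines and two points). Two lines and two arbitrary points in $\PP^2_k$ cannot in general be made simultaneously coordinate, so some incidence constraint is required. Your appeal to Lemma~\ref{lem:two-unbent} only covers configurations with two unbent lines through a common vertex; the cases with two or three bent lines---precisely those producing the extra secondary components and hence extra image subspaces---are left unhandled.

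The paper closes this gap by a direct Chow-ring and connectedness argument, without relying on Lemma~\ref{lem:two-unbent} at all. It shows that among the components $C \neq C_i$, only those whose class contains a monomial divisible by $H_i$ (namely $C_{ij}$ and $C_{ik}$) can have positive-dimensional image in $\PP(L_i)_k$; these images are lines. The remaining components project to points, and by analyzing the projection through $\M(\{v_i,v_j\})$ together with connectedness in codimension~$1$, each such point is forced to lie on one of the two lines. Thus the image configuration in each factor is always of the form ``at most two lines, with at most one marked point on each,'' and \emph{that} configuration admits a simultaneous coordinate basis. This structural fact is the missing ingredient in your argument.
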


\begin{proof}
Let $\Gamma = \{v_1, v_2, v_3\}$ and $C_1,C_2,C_3$ 
the corresponding primary components of~$\M(\Gamma)_k$.
Let $C_{12}$ denote the unique 
component whose cycle class includes the monomial $H_1H_2H_3^2$ and similarly
for the other pairs of indices. In this manner,
every component of $\M(\Gamma)_k$ has
a label, which may not be unique, e.g.~$C_1 = C_{12}$ or $C_{12} = C_{13}$ are
allowed.

For each component~$C$ other than $C_1$, we project to
$\M(\{v_1\})_k \simeq \PP^2_k$. If the image of $C$ is positive-dimensional then it
meets every line  in $\PP^2_k$, so $H_1 \cdot [C]$ is a non-zero cycle.
In this case, $C$ must be $C_{12}$ or $C_{13}$, because the other monomials
in~(\ref{eqn:class-triangle}) are annihilated by $H_1$. Thus, each component
other than $C_1$, $C_{12}$ and~$C_{13}$ maps to a point in~$\M(\{v_1\})_k
\simeq \PP^2_k$. We shall describe the images of these components more explicitly.

Since $v_1$ and~$v_2$ lie in a single apartment,
Theorem~\ref{thm:twistedVeronese} implies that $\M(\{v_1, v_2\})_k$ is either
the union of two copies of $\PP_k^2$ and $\PP_k^1{\times} \PP_k^1$, or the union of
$\PP_k^2$ and the blow-up of $\PP_k^2$ at a point. Looking at the cycle classes,
we see that $C_{12}$ maps onto $\PP_k^1 {\times} \PP_k^1$ in the
first case and onto the blow-up in the second case.  If $C_{12}$ is
distinct from~$C_1$, then a copy of~$\PP^2_k$ in~$\M(\{v_1, v_2\})_k$
maps isomorpically onto $\M(\{v_1\})_k$. Thus, the image of~$C_{12}$ in
$\M(\{v_1\})_k$ must be their line of intersection in $\M(\{v_1, v_2\})_k$.
Also by Lemma~\ref{lem:projection}, the image of~$C_2$ must be one of the
components in $\M(\{v_1, v_2\})_k$, so it meets the image
of~$C_{12}$. If $C_2 $ is different from $C_{12}$, then we already saw that its
image in
$\M(\{v_1\})_k$ is a point, and since its image intersects $C_{12}$, it must be
a point in the image of~$C_{12}$.
Suppose that $C_{23}$ has cycle class $H_1^2H_2H_3$, and so has no other label.
By Theorem~\ref{sec:must;thm:basicfacts}, $\M(\Gamma)_k$ is Cohen-Macaulay and
hence connected in codimension~$1$. The curves on
$C_{23}$ all have cycle classes which are linear combinations of $H_1^2H_2^2
H_3$ and $H_1^2 H_2 H_3^2$. Thus, $C_{23}$ intersects either $C_2$ or~$C_3$
in codimension~$1$. If the image of either $C_2$ or $C_3$ is a point, then the
image of $C_{23}$ must be the same point.

In conclusion, the set of images  in $\M(\{v_1\})_k \simeq \PP^2_k$
 of the components other than $C_1$ consists of at
most two lines and at most one point in each of the lines. We can always choose
coordinates such that each of these is a coordinate linear space. Repeating this
for each of the projections gives the desired system of coordinates  
on $\PP^2 {\times} \PP^2 {\times} \PP^2$.

For the second assertion of Lemma \ref{lem:good-coords},
we choose coordinates as above and let $C$ be a component
whose cycle class is a monomial in the Chow ring. Then $C$ must be the product
of linear subspaces, so $C$ is equal to the product of its images on the
projections, which must be coordinate linear spaces by the above argument.
\end{proof}

\begin{cor}\label{cor:six-comps}
A Mustafin triangle $\Gamma$ is of monomial type if and only if
the special fiber $\M(\Gamma)_k$ has six
irreducible components.
\end{cor}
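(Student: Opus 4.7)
My plan is to prove the two directions separately, both of which reduce to results already established in the excerpt.

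For the forward direction, suppose $\Gamma$ is of monomial type. Then by Remark~\ref{rmk:monomial-max-comps} the number of secondary components equals the upper bound $\binom{n+d-2}{d-1} - n = 6 - 3 = 3$. Together with the three primary components guaranteed by Corollary~\ref{cor:primary}, this gives exactly six irreducible components of $\M(\Gamma)_k$.

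For the converse, suppose $\M(\Gamma)_k$ has six irreducible components. The cycle class of $\M(\Gamma)_k$ in the Chow ring of $(\PP^2)^3$ is the multiplicity-free sum of the six monomials displayed in~(\ref{eqn:class-triangle}). This total class is the sum of the cycle classes of the irreducible components; each such class is a non-zero effective sum of a subset of those six monomials. Since there are six components and six monomials, each component must have cycle class equal to a single monomial.

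Now I invoke the second statement of Lemma~\ref{lem:good-coords}: there is a choice of coordinates on $\PP^2 \times \PP^2 \times \PP^2$ in which every component whose cycle class is one of the monomials in~(\ref{eqn:class-triangle}) is a product of coordinate linear spaces, and in particular is cut out by a monomial ideal. Applying this to all six components simultaneously, the ideal of $\M(\Gamma)_k$ is the intersection of six monomial prime ideals in these coordinates, and is therefore a monomial ideal. By choosing bases for $L_1, L_2, L_3$ dual to these coordinates, we conclude that $\Gamma$ is of monomial type.

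No genuine obstacle arises, because the combinatorial heavy lifting has already been done: Remark~\ref{rmk:monomial-max-comps} handles one direction, and Lemma~\ref{lem:good-coords} supplies the coordinate system for the other. The only point requiring care is the observation that when the number of components hits the maximum, the multiplicity-free structure of the diagonal class in~(\ref{eqn:class-triangle}) forces each component to carry a single monomial cycle class, which is precisely the hypothesis required by Lemma~\ref{lem:good-coords}.
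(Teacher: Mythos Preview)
Your proof is correct and follows essentially the same approach as the paper: the forward direction via Remark~\ref{rmk:monomial-max-comps}, and the converse by observing that six components force each cycle class to be a single monomial from~(\ref{eqn:class-triangle}), whence Lemma~\ref{lem:good-coords} makes every component a product of coordinate linear spaces. Your write-up is slightly more explicit than the paper's (you spell out the $3+3$ count and the intersection-of-monomial-primes step), but the argument is the same.
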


\begin{proof}
Monomial type implies six irreducible components
by Remark~\ref{rmk:monomial-max-comps}. Conversely, if $\M(\Gamma)_k$ has six
irreducible components, then each component has a unique monomial
from~(\ref{eqn:class-triangle}) as its cycle class, so
Lemma~\ref{lem:good-coords} implies that $\M(\Gamma)_k$ is of monomial type.
\end{proof}

At this point in our derivation, the following facts about
Table~\ref{tbl:isomorphism-counts} have been proved. All entries below the main diagonal are
zero: by Proposition \ref{prop:existence-secondarycomp}, the number
of secondary components cannot exceed the number of bent lines.
Corollary \ref{cor:three-unbent} confirms the first column.
Lemma \ref{lem:two-unbent} confirms the second column.
Corollary \ref{cor:six-comps} shows that ${\it 5} + {\bf 8}$ is an upper bound
for the lower right entry. We now prove the matching lower bound.

\begin{prop}\label{prop:monomial-realizable}
Each of the $13$ isomorphism classes of
monomial ideals on the main component of the Hilbert scheme $H_{3,3}$  arises as
the special fiber of a Mustafin variety.
\end{prop}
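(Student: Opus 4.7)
The plan is to produce, for each of the 13 monomial ideals catalogued in \cite[\S 5]{CS}, an explicit triple $\Gamma = \{[L_1],[L_2],[L_3]\}$ of lattices in $\mathfrak{B}_3$ whose associated Mustafin variety has $\M(\Gamma)_k$ equal (up to a coordinate change on $(\PP^2_k)^3$) to the given ideal. By Corollary \ref{cor:six-comps}, $\M(\Gamma)_k$ is of monomial type precisely when it has six irreducible components, so all 13 targets naturally sit in the ``six components, three bent lines'' cell of Table \ref{tbl:isomorphism-counts}, split as the $5$ planar plus $8$ non-planar monomial pictures displayed in \cite[Figure 2]{CS}.

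For the $5$ planar types, I would invoke Theorem \ref{thm:twistedVeronese} together with Example \ref{ex:planar-3-3}. Each planar monomial type is one of the five Postnikov types of fine mixed subdivisions of $3\Delta_2$. For each such type, I would pick integer vectors $u^{(1)},u^{(2)},u^{(3)} \in \Z^3/\Z(1,1,1)$ in general position whose tropical hyperplane arrangement realizes it, and set $L_i = \pi^{-u_{i,1}} R e_1 \oplus \pi^{-u_{i,2}} R e_2 \oplus \pi^{-u_{i,3}} R e_3$. Theorem \ref{thm:twistedVeronese} immediately identifies $\M(\Gamma)_k$ with the gluing of toric pieces prescribed by $\Delta_\Gamma$, which in our chosen coordinates reads off as the target monomial ideal.

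For the $8$ non-planar types, I would follow the ``regrafting'' strategy signalled in Example \ref{ex:three-bent}. Starting from a planar monomial configuration $\{[L_1],[L_2],[L_3]\}$ of diagonal lattices in a fixed basis $e_1,e_2,e_3$, I replace one lattice $L_i$ by $gL_i$ for a suitable matrix $g \in GL_3(R)$ whose reduction modulo $\pi$ is non-trivial (typically a single elementary modification of the form $I + \pi^a E_{jk}$, with $a$ chosen against the valuations of the other minors). Such $g$ pushes $[L_i]$ out of the original apartment, making $\Gamma'$ non-planar while preserving the six-component count; Lemma \ref{lem:good-coords} then provides coordinates on $(\PP^2_k)^3$ in which the ideal of $\M(\Gamma')_k$ is manifestly monomial, and Corollary \ref{cor:six-comps} guarantees it stays of monomial type. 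I would tabulate one such regrafting for each of the $8$ non-planar pictures in \cite[Figure 2]{CS}, so that the catalog is exhausted.

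The main obstacle is the verification in the non-planar phase: I must show each constructed $\M(\Gamma')_k$ is \emph{the particular} monomial ideal targeted, not merely \emph{some} six-component monomial scheme. This is a case-by-case computation of the ideal of $2{\times}2$-minors of $(g_1,g_2,g_3)(X)$ intersected with $R[X]$, followed by the coordinate transformation of Lemma \ref{lem:good-coords} and comparison with the target ideal. A potential shortcut, which I would exploit where possible, is to distinguish the $8$ non-planar types by the intersection patterns of their primary and secondary components (the reduction complex), since an incidence comparison suffices once monomiality is ensured; this reduces the verification from a full Gr\"obner computation to matching a short combinatorial invariant.
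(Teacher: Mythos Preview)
Your plan is broadly the same as the paper's: exhibit an explicit triple of lattices for each of the $13$ monomial types and verify the special fiber by computation. Your treatment of the $5$ planar types via Theorem~\ref{thm:twistedVeronese} is correct and in fact cleaner than the paper, which uses one uniform parametrized family $\{\mathrm{Id}, G, H\}$ with $G$ diagonal and $H$ upper-triangular in $\pi$, and then lists an integer vector $(a,\ldots,h)$ for every one of the $13$ types.

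There are two genuine gaps in the non-planar phase. First, your ``regrafting'' description is imprecise: an elementary matrix $I+\pi^a E_{jk}$ with $a\geq 1$ reduces to the identity, contrary to what you wrote, and acting by such a $g$ on a diagonal lattice need not move it out of the apartment at all (whether it does depends on $a$ relative to the exponents in the diagonal lattice). You have not argued that your modified $\Gamma'$ still has three bent lines with three \emph{distinct} bend points outside $\Gamma'$; without that, Proposition~\ref{prop:existence-secondarycomp} does not give six components and Corollary~\ref{cor:six-comps} does not apply. Second, your proposed shortcut of distinguishing the $8$ non-planar monomial types by their reduction complex is not justified: several of the pictures in \cite[Figure~2]{CS} share the same abstract intersection pattern, and the isomorphism type as a $k$-scheme records finer data (which coordinate lines glue to which). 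The paper does not attempt such a shortcut; it computes the ideal directly.

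Finally, you do not address why the same configuration realizes the same monomial ideal over \emph{every} discrete valuation ring $(R,K,k)$. The paper handles this (Remark~\ref{rmk:compute}) by working over the $2$-dimensional base $S^{-1}\Z[t]$, checking that a standard basis has unit leading coefficients, and deducing flatness and hence field-independence of the special fiber. Your proposal needs an analogous argument, or at least an acknowledgement that each verification must be carried out uniformly in the residue characteristic.
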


\begin{proof}
For any integer vector $(a,b,c,d,e,f,g,h)$ consider the three matrices
\[\begin{pmatrix}  1 & 0& 0 \\ 0 & 1 & 0 \\ 0 & 0&1 \end{pmatrix}, \quad    G =
\begin{pmatrix}   1&  0 & 0  \\ 0 & \pi^a & 0 \\ 0 & 0 & \pi^b   \end{pmatrix}
\quad \text{ and } \quad  H=  \begin{pmatrix}\pi^c& \pi^d & (1+\pi) \pi^e  \\   0   &
\pi^f &  \pi^g  \\0 & 0  & \pi^h \end{pmatrix}\]
in $GL_3(K)$. Consider the configuration
$\Gamma= \{ [L_1], [GL_1], [HL_1]\}$ in the building~$\mathfrak B_3$
where $L_1 \simeq R^3$ is a reference lattice. The
generic fiber of the Mustafin variety $\M(\Gamma)$ is defined by the $2 \times
2$-minors of the matrix
\begin{equation}\label{eqn:realizability-matrix}
\begin{pmatrix}
         x_1 &    x_2  &  x_3 \pi^c+y_3 \pi^d+ z_3 (1 + \pi) \pi^e \\
         y_1 &  y_2 \pi^a &       y_3 \pi^f+z_3 \pi^g    \\
          z_1 &  z_2 \pi^b &            z_3 \pi^h      
           \end{pmatrix}
\end{equation}
The following list proves that each of the $13$ isomorphism classes
of monomial ideals in $H_{3,3}$ can be realized as a special fiber for some $\Gamma$.
We use the labeling in  \cite[Table~1]{CS}:

\begin{equation*} \begin{array}{l|l|l} 
\!\!\!\! \text{type}\! \! & (a,b,c,d,e,f,g,h) &  \text{monomial ideal defining the special fiber} \\
\hline
1  &  (-1, 1, 0, 1, 0, 1, 0, -1) &  \langle y_2 z_3, x_2 z_3, y_1 z_3, x_1 z_3, x_3 y_2, x_3 y_1,
y_2z_1, x_1y_2, x_2z_1\rangle \\
2 & (-1, 3, -1, 0, 1, 0, 1, 1) & \langle y_2z_3, x_2z_3, y_3z_1, x_3y_2, x_3z_1, x_3y_1,
y_2z_1, x_1y_2, x_2z_1\rangle \\
3 &  (-1, 2, -1, 0, 0, 1, 1, 0) & \langle y_2z_3, x_2z_3, y_1z_3, x_3y_2, x_3z_1, x_3y_1,
y_2z_1, x_1y_2, x_2z_1\rangle \\
4 & (-1, 1, 1, 1, 2, -1, 0, 0) & \langle y_2z_3, x_2z_3, x_1z_3, x_2y_3, y_3z_1, x_1y_3,
y_2z_1, x_1y_2, x_2z_1\rangle \\
5 & (1, -2, 1, 0, 2, 2, 4, 1) & \langle y_1z_3, y_3z_2, y_2y_3, y_3z_1, y_1y_3, x_3z_2,
z_2y_1, z_2x_1, x_2y_1, x_1y_2z_3\rangle \\
8 & (1, -2, 2, 0, 2, -1, 0, 0) & \langle z_2z_3, x_1z_3, y_3z_2, x_2y_3, y_3z_1, x_1y_3,
z_2y_1, z_2x_1, x_2y_1\rangle \\
9 & (1, 3, 1, 0, 0, 2, -1, 0) & \langle y_2z_3, x_2z_3, z_3z_1, x_1z_3, y_2y_3, y_3z_1,
y_2z_1, x_2z_1, x_2y_1\rangle \\
10 & (2, 1, 0, -1, -2, 1, -1, 0) & \langle z_2z_3, x_2z_3, z_3z_1, y_1z_3, y_3z_2, y_3z_1,
z_2y_1, x_2z_1, x_2y_1\rangle \\
11 & (-4, 1, 3, 4, 1, 1, 0, 3) & \langle z_2z_3, y_2z_3, z_3z_1, x_1z_3, y_3z_1, x_3y_2,
y_2z_1, x_1y_2, x_2z_1, y_3z_2x_1\rangle \\
13 & (-3, -6, 6, 7, 3, 4, 3, 0) & \langle z_2z_3, y_2z_3, y_1z_3, x_1z_3, y_3z_2, x_1
y_3, z_2y_1, z_2x_1, x_1y_2, x_3y_2y_1\rangle \\
14 & (3, 1, 0, 1, -1, 3, 1, 0) & \langle z_2z_3, x_2z_3, z_3z_1, y_1z_3, x_3z_2, x_3y_1,
z_2y_1, x_2z_1, x_2y_1\rangle \\
15 & \! (-1, 4, 1, -1, -2, 1, -1, 1) \! & \langle y_2z_3, x_2z_3, z_3z_1, y_1z_3, y_2y_3, y_3z_1,
y_2z_1, x_1y_2, x_2z_1, y_1x_2y_3\rangle \\
16 & (2, -2, 1, 0, 0, 1, 0, -2) &\langle z_2z_3, x_2z_3, y_1z_3, x_1z_3, y_3z_2, y_1
y_3, z_2y_1, z_2x_1, x_2y_1, y_3x_2x_1\rangle 
\end{array} \end{equation*}
We argue below that these realizations are independent of the choice of  $(R,K,k)$.
\end{proof}

\begin{remark} \label{rmk:compute} Our proof of
Proposition~\ref{prop:monomial-realizable} relies on the {\em computation} of the special
fiber of a Mustafin variety {\em over an arbitrary discrete valuation ring}. For this, we work over the
$2$-dimensional base ring $T = S^{-1} \Z[t]$, where $S$ is the multiplicative
set $\{1 + r \mid r \in t \cdot \Z[t]\}$.  We take the ideal of $2\times
2$-minors of~(\ref{eqn:realizability-matrix}), with $t = \pi$ and saturate with
respect to~$t$ to obtain an ideal $I$ in $T' = T[x_i,y_i,z_i : 1 \leq i \leq
3]$.  The essential check is that the resulting quotient ring $T'/I$ is flat
over~$S$. For any discrete valuation ring~$R$ with uniformizer~$\pi$, there is a unique homomorphism $f\colon S \rightarrow
R$ which sends $t$ to~$\pi$. Since the subscheme of~$(\PP_R^{2})^3$ defined by
$f(I)$ is flat over~$R$, it coincides with the desired Mustafin variety $\M(\Gamma)$.  The
special fiber is defined by the image of $I$ in $k \otimes_\Z (T'/\langle
t\rangle) = k[x_i, y_i, z_i : 1 \leq i \leq 3]$.

There are no well-developed tools for computing in~$T$ directly, so we use
\texttt{Singular} with a local term order in the ring $\Q \otimes T = \Q[t]_{(t)}$.
 We compute $I_\Q$ as the saturation
of the ideal of the $2 \times 2$ minors of~(\ref{eqn:realizability-matrix}) with
respect to~$t$. As long as our standard basis for~$I_\Q$ has integer
coefficients and each leading term has coefficient~$1$, we can
define $I$ in $T'$ to be the ideal generated by the same polynomials. Each
reduction of these generators to $\Z/p \otimes T'$ is also standard basis. Thus,
for each prime~$p$, $\Z/p \otimes (T'/I)$ is flat over $\Z/p \otimes T =
(\Z/p)[t]_{(t)}$ with the same Hilbert function, so $T'/I$ is flat over~$T$.
\end{remark}

Our next lemma bounds 
the entry~${\it 5} \! + \! {\bf 6}$ in Table~\ref{tbl:isomorphism-counts}.

\begin{lem}\label{lem:almost-monomial}
If a Mustafin triangle $\Gamma$ has two distinct bend points, but is not contained in one apartment,
then the $\M(\Gamma)_k$ is one of the following six ideals:
\begin{align*}
(-2, 2, 0, 1, -1, -2, -1, 0) &\quad
\langle y_3z_1, x_2z_3, y_2z_3, x_1y_3, x_1y_2, y_2z_1,
z_3z_1, x_2z_1, x_3y_2-x_2y_3 \rangle \\
(-3, -1, 4, 5, 4, 1, 0, 1) &\quad
\langle x_2z_3, z_1z_3, y_2z_3, x_1y_3, z_1y_2, x_1y_2, x_1z_3,
y_2x_3-x_2y_3, x_1z_2 \rangle \\
(-1, -3, 1, 4, 2, 2, 0, -1) &\quad
\langle y_1z_3, x_1z_3, x_1y_2, x_3y_1, y_1z_2, z_2x_1,
z_2x_3, z_2z_3, x_2z_3-x_3y_2 \rangle \\
(-3, -4, 3, 2, 3, 0, -1, 0) &\quad
\langle x_1z_3, x_1y_2, x_1y_3, y_3z_2, y_1z_2, z_2x_1,
z_3z_2, y_2y_3-x_2z_3, z_3z_1 \rangle \\
(-3, -1, 2, 1, 2, -1, 0, 0) &\quad
\langle z_1y_2, z_2x_1, y_2z_3, x_1z_3, x_1y_2, y_2y_3,
y_3z_1, x_1y_3, y_3z_2-x_2z_3 \rangle \\
(-2, -4, 3, 2, 3, 1, 0, -1) &\quad
\langle y_1z_3, x_1z_3, x_1y_2, x_1y_3, y_1z_2, y_3z_2,
z_2x_1, z_3z_2, y_2y_3-x_2z_3 \rangle
\end{align*}
\end{lem}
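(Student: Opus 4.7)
The plan is to classify non-planar Mustafin triples with exactly two bent lines, up to the $PGL(V)$-action, and then to compute the resulting special fiber in each case using the matrix presentation from the proof of Proposition~\ref{prop:monomial-realizable}. By Proposition~\ref{prop:existence-secondarycomp} the two distinct bend points, which are not in $\Gamma$, contribute two secondary components, so $\M(\Gamma)_k$ has exactly five irreducible components. Lemma~\ref{lem:good-coords} then lets us choose coordinates on $(\PP_k^2)^3$ so that every component of monomial cycle class is a product of coordinate linear subspaces; the defining ideal is therefore generated by monomials except for at most one extra relation, which governs the unique secondary component whose class in \eqref{eqn:class-triangle} is a sum of two monomials. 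This explains the ``almost monomial'' shape of the six ideals listed.

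Since exactly two of the three tropical segments are bent (three would give three bend points, while one would give just one), I rename so that $[L_2][L_3]$ is unbent. Choose a basis $e_1,e_2,e_3$ of $V$ simultaneously diagonalizing $L_2$ and $L_3$; non-planarity of $\Gamma$ then forces $L_1$ to be non-diagonal with respect to this basis. Using the apartment stabilizer in $GL(V)$ together with rescaling the $e_i$, I write $L_1$ as $HL_0$ for an upper-triangular matrix $H$ of the shape used in~\eqref{eqn:realizability-matrix}. The requirements that $[L_1][L_2]$ and $[L_1][L_3]$ each have exactly one bend, located at distinct points outside $\Gamma$, translate via~\eqref{eq:thickness} applied to the pairs in $\Gamma$ into a finite collection of open cells in the exponent space $\Z^8$.

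For each cell I pick a representative vector $(a,b,c,d,e,f,g,h)$ and compute the special fiber as in Remark~\ref{rmk:compute}: take the ideal of $2\times 2$-minors of~\eqref{eqn:realizability-matrix}, saturate with respect to the uniformizer over the auxiliary two-parameter base ring, and reduce modulo the uniformizer. The output should match one of the six ideals displayed in the statement, and I would simply report the tuple and the ideal alongside each other, as in the table proving Proposition~\ref{prop:monomial-realizable}. The main obstacle is the combinatorial case analysis: one must show that the open cells of exponents collapse under the $PGL(V)$-action and the available coordinate symmetries to exactly six equivalence classes, and that each class is realized by one of the displayed tuples. The involution swapping $[L_2]$ and $[L_3]$, and more generally the Weyl-group action on the apartment diagonalizing $L_2$ and $L_3$, are essential in cutting an \emph{a priori} larger list of candidate cells down to six; a separate check, using the combinatorics of how the unique non-monomial secondary component attaches to the primary components, confirms that the six ideals are pairwise non-isomorphic as $k$-schemes, so nothing is over-counted.
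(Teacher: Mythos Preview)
Your route is genuinely different from the paper's. You try to classify the configurations $\Gamma$ up to $PGL(V)$ and then compute each special fiber; the paper instead bounds the possible special fibers directly, with no configuration analysis for the upper bound. From the five-component count and Lemma~\ref{lem:good-coords}, the paper observes that four components are monomial and the fifth, call it $C$, is a $(1,1)$-hypersurface in $\PP^1_k\times\PP^2_k$. Choosing a term order with $x_1,x_2$ high, the initial ideal of $C$ is $\langle x_1x_2\rangle$, so the entire special fiber Gr\"obner-degenerates to one of the thirteen monomial ideals on the main component of $H_{3,3}$ classified in~\cite{CS}; the degeneration splits $C$ into a $\PP^2$ and a $\PP^1\times\PP^1$ along a line containing a coordinate point not on any other component. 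Reading the pictures in~\cite[Figure~2]{CS} yields $22$ such coordinate points, each smoothing arises from two of them, hence $11$ five-component schemes, of which $5$ are planar. Only the realizability of the remaining six uses explicit configurations, via Remark~\ref{rmk:compute}. This trades your hard classification step for a picture-count already done in~\cite{CS}.

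On your sketch itself there are two concrete gaps. First, formula~\eqref{eq:thickness} is the thickness formula in the tree $\mathfrak{B}_2$; it does not detect bend points of tropical segments in $\mathfrak{B}_3$, so you currently have no mechanism for converting the bend conditions into inequalities on $(a,\ldots,h)$. Second, and more seriously, the core step---collapsing the cells in exponent space to exactly six classes under the available symmetries---is only asserted, not performed; this is the entire content of the lemma, and the sketch gives no indication of how to carry it out or even how to bound the number of cells. A minor point: your parenthetical ``three bent lines would give three bend points'' is false, as Example~\ref{ex:three-bent} shows triangles with three bent lines and only two distinct bend points. If you want a configuration-based argument, the explicit normal forms of Lemma~\ref{lem:two-unbent} are a better template than an unspecified cell decomposition.
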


\begin{proof}
By Proposition~\ref{prop:existence-secondarycomp}, $\M(\Gamma)_k$ has five
components.  Lemma~\ref{lem:good-coords} implies that four of these components
are defined by monomial ideals, and the fifth, $C$, is a surface in
$\PP^1_k \times \PP^2_k$. We fix coordinates $(x_1{:}y_1)$ on $\PP^1_k$
and coordinates $(x_2{:}y_2 {:} z_2)$ on $\PP^2_k$. The equation for $C$ has the form
$\begin{bmatrix}x_1 & y_1\end{bmatrix} M \begin{bmatrix} x_2 & y_2 & z_2
\end{bmatrix}^t$, where $M$ is a $2 \times 3$ matrix. If all the non-zero
entries of~$M$ lie in a single row or a single column, then $C$ would be
reducible. Without loss of generality, we assume that the coefficients of both
$x_1x_2$ and $y_1 y_2$ are non-zero. In particular, the point defined by
$x_1 = x_2 = z_2 = 0$ is not in $C$.

Fix any term order with $x_1,x_2 >  y_1, y_2, z_2$.
The initial ideal of~$C$ equals~$\langle x_1x_2 \rangle$. Let $B$
denote the union of the other components. The monomial ideal defining $B$
is its own initial ideal. By \cite[Thm.~2.1]{CS}, the initial ideal  of the special fiber is
radical. Its Hilbert function is sum of the Hilbert functions of $\initial(C)$
and of $\initial(B) = B$ minus that of $\initial(C) \cap
\initial(B)$. However, the Hilbert function is constant under taking an initial
ideal and $C\cap B$ is already a monomial ideal, so $\initial(C) \cap
\initial(B)$ equals $C \cap B$. In particular, $B$ does not contain the
point $x_1 = x_2 = z_2 = 0$. Thus, $\M(\Gamma)_k$ has an
initial monomial ideal in which $C$ degenerates to the union of $\PP_k^2$ and $\PP^1_k
\times \PP^1_k$, whose common line ($x_1=x_2 = 0$) contains a coordinate point
($x_1 = x_2 = z_2 = 0$)
not in any other
component.

When examining the pictures of the $13$ monomial schemes on the main component
of $H_{3,3}$~\cite[Figure~2]{CS}, we find that there are, up to symmetry, $22$
coordinate points on the line between a $\PP^2$ and a $\PP^1 {\times} \PP^1$,
but not on any other component.  Each of these points yields a scheme with $5$
components by replacing the $\PP^2$ and $\PP^1 {\times} \PP^1$ with the blow-up
of $\PP^2$ at a point. However, there are two possible ways of obtaining each
scheme, so there are 11 schemes, {\it 5} of which are planar. The remaining {\bf
6} special fibers, not achievable by a configuration in one apartment, are those
listed in the statement.

The computational methods outlined in Remark~\ref{rmk:compute} show that
the ideals can be realized as the special fiber $\M(\Gamma)_k$ by a
configuration as in Proposition~\ref{prop:monomial-realizable}, where the vector
of integers $(a,b,c,d,e,f,g,h)$ is listed to the left of each ideal. 
\end{proof}

\begin{proof}[Proof of Theorem~\ref{thm:TriangleCensus}]
Bearing in mind the planar classification in Figure \ref{fig:18types},
we enumerate all possibilities based on the number of bent lines.
If that number is zero or one then Lemma~\ref{lem:two-unbent} proves the claim.

Next suppose that $\Gamma$ has two bent lines and one unbent line, say,
between $[L_2]$ and~$[L_3]$. If the two bent lines have the same
bend point $v$, then none of the bend points can lie in $\Gamma$, so
$\M(\Gamma)_k$ has four components. 
 Then $\{v, [L_2], [L_3]\}$ is a configuration with no bent lines,
and thus, by Corollary~\ref{cor:three-unbent}, it must lie in a single
apartment. 
The first lattice point after $v$ on the line
 to either $[L_2]$ or $[L_3]$  corresponds to a line  in
$L_v / \pi L_v$ under the bijection of Lemma~\ref{sect:mus;lem:neighbours}, 
so the angle between the two edges must be either $0$ or~$120$
degrees. Since the line between $[L_2]$ and~$[L_3]$ is unbent, the angle must be
$0$, i.e.\ the three vertices lie on a straight line. Without loss of
generality, we assume that $[L_3]$ lies between
$v$ and~$[L_2]$. Therefore, if we choose an apartment containing $[L_1]$
and~$[L_2]$, then it will contain $[L_3]$ as well.
If $\Gamma$ has two bent lines whose bend points do not coincide, then
$\M(\Gamma)_k$ has five components by
Proposition~\ref{prop:existence-secondarycomp}. By
Lemma~\ref{lem:almost-monomial}, there are six non-planar types
in this case.

What remains to be proved is 
the last column in Table~\ref{tbl:isomorphism-counts}.
The top entry ${\it 1}+{\bf 0}\,$ is correct because here
each of the three lines from $\Gamma$ is bent at the third point,
and the only possibility for this is $\,\Gamma = \,$
\includegraphics[scale=.37]{typ-e3-T.eps} \
with $\,\M(\Gamma)_k =\,$
\includegraphics[scale=.18]{typ-e3-N.eps}.
The fourth entry ${\it 5}+{\bf 8}\,$ in the last column of
Table  \ref{tbl:isomorphism-counts} is correct by Corollary \ref{cor:six-comps}
and Proposition \ref{prop:monomial-realizable}.

Suppose  that $\Gamma$ has three bent lines but there is only one
bend point $v=  [L_0]$. By Proposition~\ref{prop:existence-secondarycomp},
there is only one secondary component, indexed by $v$.
Consider the first step on the line from $v$ to
any of the vertices in~$\Gamma$. The first steps give us one-dimensional subspaces 
of $L_0 / \pi L_0 \simeq k^3$, hence points in $\PP_k^2$. These points must
be distinct or else the convex hull of two lattice points $[L_i]$ and~$[L_j]$ would not pass
through $v$.
 Each $L_i$ can
be chosen to be of the form $M_i + \pi^{t_i} L_0$, where each $M_i$ is
a rank $1$ direct summand of~$L_0$. 
Up to automorphisms of~$\PP^2_k$, there are two possibilities
for three distinct lines in~$\PP_k^2$: either they are collinear or not. In the
latter case, we fix a generator for each of $M_1$, $M_2$ and~$M_3$.
Since
the images of these three elements in $L_0 / \pi L_0$ are linearly independent, they form a
basis for $L_0$ and the configuration lies in the corresponding apartment. Hence
$\,\Gamma = \,$
\includegraphics[scale=.3]{typ-c1-T.eps} \
and $\,\M(\Gamma)_k =\,$
\includegraphics[scale=.18]{typ-c1-N.eps}.
In the former case, we choose generators $e_1'$ and~$e_2'$  of $M_1$ and~$M_2$, respectively, and
let $e_3$ be an element completing these to a basis.
Then $M_1$ is generated by 
$u e_1' + v e_2' + r e_3$, 
where $r$ is in the maximal ideal
and $u$ and $v$ are units by
the assumption that $\overline M_3$ is distinct from the other two reductions.
By substituting $e_1$ and $e_2$ for $u e_1'$ and $v e_2'$ respectively, $M_1$
and $M_2$ remain generated by $e_1$ and $e_2$ respectively, and $M_1$ is
generated by $e_1 + e_2 + r e_3$. It can be
checked that any
configuration of this form leads to the ``sailboat'' of Example~\ref{ex:sailboat}.

We now assume
that $\Gamma$ has three bent lines and exactly two of the bend points coincide. By Proposition~\ref{prop:existence-secondarycomp},
$\Gamma$ has five components. We must show that
(\ref{eqn:airplane}) and
(\ref{eqn:mutant-airplane}) are the only possibilities.
Suppose $v$ is the common bend point of the lines determined by $\{[L_1],[L_2]\}$ and 
$\{[L_1], [L_3]\}$.
Consider the triple $\{v, [L_2], [L_3]\}$. If it lies
in a  single apartment, then either it lies on an unbent line or $v$ is
the bend point between $[L_2]$ and~$[L_3]$, both of which contradict our
assumptions. Thus, $\{v, [L_2], [L_3]\}$  is
  one of the non-planar configurations in
Lemma~\ref{lem:two-unbent}. Since the first step from $v$ to either $[L_2]$
or $[L_3]$ defines a point (and not a line) in $\PP^2_k$, 
the only possibility is case (i), and we have
$\,v = [R\{e_1, e_2, e_3\}] $,
$L_2 = R\{e_1, \pi^s e_2, \pi^s e_3\}$, 
$L_3 = R\{e_1 {+} \pi ^t e_2, \pi^u e_2, \pi^u e_3\}$ with
 $0 {<} t {<} s, u$.

The lattice $L_1$ must have the form $M_1 + \pi^r R\{e_1, e_2, e_3\}$ where
$M_1$ is a rank $1$ direct summand of $R\{e_1, e_2, e_3\}$. Since $v$ is
the bend point of $\{L_1, L_2\}$, the first steps from $v$ to $L_1$ and $L_2$ cannot coincide. 
Hence $\overline M_1 \not= k\{e_1\}$, so 
$M_1$ is generated by $a e_1 + b e_2 + c e_3$, where $b$ and~$c$ do not both
have positive valuation.
If $c$ is a unit, then we can take the change of basis $e_3' = a e_1 +
b e_2 + c e_3$ and then our lattices become
\begin{equation*}
L_1 = R\{ \pi^r e_1, \pi^r e_2, e_3' \}, \,\,\,
L_2 = R\{e_1, \pi^s e_2, \pi^s e_3'\}, \,\,\,
L_3 = R\{e_1 + \pi^t e_2, \pi^u e_2, \pi^u e_3'\}.
\end{equation*}
The corresponding special fiber is (\ref{eqn:airplane}) from Example~\ref{ex:three-bent}. On
the other hand, if $c$ is not a unit,
then $b$ must be a unit, and we take $e_1' = (1-\pi^t
a/b)e_1$ and $e_2' = (a/b) e_1 + e_2$ to get the lattices
\begin{equation*}
L_1 = R \{\pi^r e_1', \pi^r e_3, e_2' + (c/b)e_3\}, \,
L_2 = R\{e_1', \pi^s e_2', \pi^s e_3 \},\,
L_3 = R\{e_1' + \pi^t e_2', \pi^u e_2', \pi^u e_3 \},
\end{equation*}
where $c/b$ has positive valuation.  Now, the special fiber
is~(\ref{eqn:mutant-airplane}) from Example~\ref{ex:three-bent}.
\end{proof}

\bigskip

\noindent \textbf{Acknowledgements.}
This research project was conducted during the
Fall 2009 program on \emph{Tropical Geometry} at
the Mathematical Sciences Research Institute (MSRI) in Berkeley.
Dustin Cartwright and Bernd Sturmfels
were partially supported by the U.S.~National Science
Foundation (DMS-0456960 and DMS-0757207).
Annette Werner and Mathias H\"abich acknowledge support
by the Deutsche Forschungsgemeinschaft (DFG~WE~4279/2-1) and by the Deutscher Akademischer Austausch Dienst (DAAD-grant~D/09/40336). We are indebted to Michael 
Joswig for helping us with  Figure~\ref{fig:exploding}.

\medskip

\smallskip

\noindent {\bf Authors' addresses:}

\smallskip

\noindent Dustin Cartwright and Bernd Sturmfels, Department of Mathematics,
University of California, Berkeley, CA 94720, USA,
\url{{dustin,bernd}@math.berkeley.edu}

\smallskip

\noindent  Mathias H\"abich and Annette Werner, 
Institut f\"ur Mathematik,
Goethe-Universit\"at, 
60325 Frankfurt am Main, Germany,
\url{{haebich,werner}@math.uni-frankfurt.de}


\begin{thebibliography}{21}\setlength{\itemsep}{-1.2mm}

\bibitem[AB]{ab}P. Abramenko and K.~Brown: 
\emph{Buildings. Theory and Applications}, Springer, 2008.

\bibitem[Al]{al}
V.~Alexeev: Complete moduli in the presence of 
semiabelian group actions, \emph{Annals of Mathematics} {\bf 155} (2002) 611--708.

\bibitem[AD]{ad}
F.~Ardila and M.~Develin: Tropical hyperplane arrangements
and oriented matroids, \emph{Math.~Zeitschrift} {\bf 262} (2009) 795--816.

\bibitem[BY]{BY}
F.~Block and J.~Yu:
Tropical convexity via cellular resolutions,
\emph{Journal of Algebraic Combinatorics} {\bf 24} (2006) 103--114.

\bibitem[CS]{CS}
D.~Cartwright and B.~Sturmfels: 
The Hilbert scheme of the diagonal in a product of projective spaces,
 \emph{Int. Math. Res. Not. IMRN} {\bf 9} (2010) 1741--1771.
 
\bibitem[DS]{ds}
M.~Develin and B.~Sturmfels:
Tropical convexity. \emph{Doc. Math.} {\bf 9} (2004) 1--27.

\bibitem[DJS]{djs}
A.~Dochtermann, M.~Joswig and R.~Sanyal:
Tropical types and associated cellular resolutions,
{\tt arXiv:1001.0237}.
 
\bibitem[Fa]{fa}
G.~Faltings: Toroidal resolution of some matrix singularities. In \emph{Moduli of Abelian Varieties}
(eds.~C.~Faber, G.~van~der~Geer, F.~Oort), Birkh\"auser,
Basel, 2001, 157--184.

\bibitem[Fu]{fulton} W.~Fulton:
\emph{Intersection Theory}, 2nd ed., Springer, 1998.



\bibitem[JSY]{JSY}
M.~Joswig, B.~Sturmfels and J.~Yu:
Affine buildings and tropical convexity, 
\emph{Albanian Journal of Mathematics} {\bf 1} (2007) 187--211. 

\bibitem[GKZ]{GKZ}
I. M.~Gel'fand, M.~Kapranov and A.~Zelevinsky:
{\em Discriminants, Resultants and Multidimensional Determinants},
Birkh\"auser, Boston, 1994.

\bibitem[Hu]{hu}
S. Hu: SemiÐstable degenerations of toric varieties and their hypersurfaces, 
{\em Comm. Anal. Geom.} {\bf 14} (2006) 59Ð89. 

\bibitem[Ka]{Ka}
M.~Kapranov:
Veronese curves and Grothendieck-Knudsen moduli space $\overline M_{0,n}$. 
\emph{J. Algebraic Geom.} {\bf 2} (1993)  239--262. 

\bibitem[KT]{KT}
S.~Keel and J.~Tevelev: Geometry of Chow quotients of Grassmannians, 
\emph{Duke Mathematical Journal} {\bf 134} (2006) 259--311.

\bibitem[Liu]{liu} Q.~Liu: 
\emph{Algebraic Geometry and Arithmetic Curves}, Oxford Univ.~Press, 2002.

\bibitem[Mus]{mus}
G.~A.~Mustafin: Nonarchimedean uniformization,
\emph{Math. USSR Sbornik} {\bf 34} (1978) 187--214.

\bibitem[Mu]{mum}
D.~Mumford: An analytic construction of degenerating curves over complete local rings,
\emph{Compositio Mathematica} {\bf 24} (1972) 129--174

\bibitem[PS]{ascb}
L.~Pachter and B.~Sturmfels:
\emph{Algebraic Statistics for Computational Biology},
Cambridge University Press, 2005.

\bibitem[San]{san} F.~Santos:
  The Cayley trick and triangulations of products of simplices,
  in \emph{Integer Points in Polyhedra -- Geometry, Number Theory, Algebra, Optimization}, 
  151--177,  Contemporary Math., 374, Amer. Math. Soc., 2005. 
 
\bibitem[Stu]{stu} B.~Sturmfels:
\emph{Gr\"obner Bases and Convex Polytopes},
Amer.~Math.~Soc., 1996.
 
\bibitem[To]{to}
J.~Tong: Application d'Albanese pour les courbes et contractions,
\emph{Mathematische Annalen} {\bf 338} (2007) 405--420. 
 
\end{thebibliography}
\end{document}